\numberwithin{equation}{section}
\newtheorem{thm}{Theorem}[section]
\newtheorem{lema}{Lemma}[section]
\newtheorem{prop}{Proposition}[section]
\newtheorem{rmk}{Remark}[section]
\renewcommand{\Pi}{\mathcal{P}}
\renewcommand{\Xi}{\mathcal{X}}
\let\div\relax
\DeclareMathOperator*{\div}{div}
\DeclareMathOperator*{\curl}{curl}
\DeclarePairedDelimiter\abs{\lvert}{\rvert}
\DeclarePairedDelimiter\norm{\lVert}{\rVert}
\let\oldabs\abs
\def\abs{\@ifstar{\oldabs}{\oldabs*}}
\let\oldnorm\norm
\def\norm{\@ifstar{\oldnorm}{\oldnorm*}}
\def\dashint{\,\ThisStyle{\ensurestackMath{
\stackinset{c}{.2\LMpt}{c}{.5\LMpt}{\SavedStyle-}{\SavedStyle\phantom{\int}}
}
\setbox0=\hbox{$\SavedStyle\int\,$}\kern-\wd0}\int}
\renewcommand{\em}[1]{\textbf{#1}\index{#1}}
\newcommand\blfootnote[1]{%
  \begingroup
  \renewcommand\thefootnote{}\footnote{#1}%
  \addtocounter{footnote}{-1}%
  \endgroup
}
\title{Global well-posedness for 2D compressible radially symmetric Navier-Stokes equations with swirl}
\date{}
\author{
\bf\large Xiangdi Huang$^a$, Weili Meng$^a$*\\
\small a. State Key Laboratory of Mathematical Sciences, Academy of Mathematics and Systems Science,\\
\small Chinese Academy of Sciences, Beijing 100190, China;
\blfootnote{Email addresses: xdhuang@amss.ac.cn\,\, (X. Huang); mengweili@amss.ac.cn\,\, (W. Meng). } }
\begin{document}
\maketitle
\begin{abstract}
In this paper, we consider the radially symmetric compressible Navier-Stokes equations with swirl in two-dimensional disks, where the shear viscosity coefficient \(\mu = \text{const}> 0\), and the bulk one \(\lambda = \rho^\beta(\beta>0)\). When \(\beta \geq 1\), we prove the global existence and asymptotic behavior of the large strong solutions for initial values that allow for vacuum. One of the key ingredients is to show the uniform boundedness of the density independent of the time. When \(\beta\in(0,1)\), we prove the same conclusion holds when the initial value satisfies \(\norm{\rho_0}_{L^\infty} \leq a_0\), where \(a_0\) is given by \eqref{def a_0} as in Theorem \ref{Thm3}. To the best of our knowledge, this is the first result on the global existence of large strong solutions for 2D compressible Navier-Stokes equation with real non-slip (non Navier-slip) boundary conditions when $\beta\ge1$ and the first result on the global existence of  strong solutions when $\beta\in(0,1)$.\\[4mm]
{\bf Keywords:} compressible Navier-Stokes equations; Dirichlet problem; global strong solutions; swirl.\\[4mm]
{\bf Mathematics Subject Classifications (2010):} 35D35; 35Q30; 35Q35; 76N10.\\[4mm]
\end{abstract}

\section{Introduction}
The paper is concerned with the compressible Navier-Stokes equations in a bounded domain $\Omega\subset \mathbb{R}^2$.
\begin{equation}
\label{Equ1}
	\begin{dcases}
		\rho_t+\div(\rho \mathbf{u})=0,\\
(\rho \mathbf{u})_t+\div (\rho \mathbf{u}\otimes \mathbf{u})+\nabla P=\mu \Delta \mathbf{u}+\nabla((\mu+\lambda)\div \mathbf{u}),\\
	\end{dcases}
\end{equation}
where $\rho=\rho(x,t), \mathbf{u}=(u_1(x,t), u_2(x,t))^T$ represent the unknown density field and velocity field respectively. The pressure $P$ has the following form
\begin{align*}
P=A\rho^\gamma,
\end{align*}
where the constants $A>0$ and $\gamma>1$. Assume that the shear viscosity coefficient and the bulk viscosity coefficient satisfy
\begin{align*}
0<\mu=\text{const},\quad \lambda(\rho)=b\rho^\beta,
\end{align*}
the constants $\beta$ and $b$ are positive. Without loss of generality, we always assume
\begin{align*}
A=b=1.
\end{align*}
We consider the equation \eqref{Equ1} with the following initial data
\begin{align}\label{inital data1}
\rho(x,0)=\rho_0(x),\quad \mathbf{u}(x,0)=\mathbf{u_0}(x), \quad x\in\Omega,
\end{align}
and Dirichlet boundary condition
\begin{align}\label{boundary condition1}
\mathbf{u}(x,t)=0,\quad x\in \partial \Omega.
\end{align}

For the compressible Navier-Stokes equations with constant viscosity coefficients, there have been many significant research advancements. The local existence and uniqueness of classical solutions away from vacuum were established by Nash \cite{1962 Nash-BSMF} and Serrin \cite{1959 Serrin-ARMA}. For initial values allowing for vacuum, Cho-Choe-Kim \cite{2004 Cho-Choe-Kim-JMPA} and Choe-Kim \cite{2003 Choe-Kim-JDE} established the local existence and uniqueness of strong solutions. For global classical solutions, Matsumura-Nishida \cite{1980 Matsumura-Nishida-JMKU} established the global existence of classical solutions for initial values close to a non-vacuum equilibrium state in some Sobolev space \(H^s\). Later, Hoff \cite{1995 Hoff-JDE,1995 Hoff-ARMA} studied the problem for discontinuous initial values. For the case of arbitrarily large initial values, the most significant breakthrough came from Lions \cite{1998-Lions}. He proved the global existence of finite-energy weak solutions in three dimensions when \(\gamma \geq \frac{9}{5}\). Later, Jiang-Zhang \cite{2001 Jiang-Zhang-CMP} relaxed the restriction on $\gamma$ to $\gamma>1$ under the assumption of spherical symmetry. Feireisl-Novotn\'{y}-Petzeltov\'{a} \cite{2001 Feireisl-Novotny-Petzeltova-JMFM} overcame the difficulties associated with renormalized solutions by introducing oscillating defect measures, thereby relaxing the restriction to \(\gamma > \frac{3}{2}\). Recently, Huang-Li-Xin \cite{2012 Huang-Li-Xin-CPAM} and Li-Xin \cite{2019 Li-Xin-AnnPDE} established the existence and uniqueness of global classical solutions for 3D and 2D respectively, assuming only that the initial energy is small while allowing for initial vacuum and large oscillations.

For the viscosity coefficients \(\mu = \text{const}\) and \(\lambda = \rho^\beta\), there is an abundance of research literature. Vaigant-Kazhikhov \cite{1995 Vaigant-Kazhikhov-SMJ} first established the existence and uniqueness of strong solutions in rectangle domains under the condition \(\beta > 3\) and initial density away from vacuum. Later, Jiu-Wang-Xin \cite{2014 Jiu-Wang-Xin-JMFM} established the global existence of classical solutions that allow for vacuum initial density on the \( \mathbb{T}^2 \) domain, under the condition \(\beta > 3\). Huang-Li \cite{2016 Huang-Li-JMPA}, by applying some new ideas on commutator estimates, relaxed the restriction on the viscosity coefficient to \(\beta > \frac{4}{3}\) and removed the assumption that initial density is away from vacuum. Moreover, they also demonstrated that under the conditions \(\gamma > \frac{3}{2}\) and \(1 < \gamma < 4\beta - 3\), the solutions converge to an equilibrium state as \(T \to \infty\). For the Cauchy problem, Huang-Li \cite{2022 Huang-Li-JMA} proved the global existence and uniqueness of both strong and classical solutions under the condition \(\beta > \frac{4}{3}\) and with the initial density \(\rho_0\) having a certain rate of decay (see also Jiu-Wang-Xin \cite{2013 Jiu-Wang-Xin-JDE}). One can also refer to Jiu-Wang-Xin \cite{2018 Jiu-Wang-Xin-Phys} for positive density at far field. For the free boundary problem, Li-Zhang \cite{2016 Li-Zhang-JDE} established the existence and uniqueness of strong solutions under the condition \(\beta > 1\), and they also obtained estimates for the rate of boundary expansion under the condition \(1 < \beta \leq \gamma\). Huang-Meng-Ni \cite{2025 Huang-Meng-Ni-JMAA} utilized the geometric properties of 2D radial symmetry to extend the condition to $\beta=1$. For the Navier-slip boundary, Fan-Li-Li \cite{2022 Fan-Li-Li-ARMA}, using the Riemann mapping theorem and the pull-back Green’s function, established the existence and uniqueness of strong solutions in general bounded simply connected domains under the restriction \(\beta > \frac{4}{3}\). Later, Fan-Li-Wang \cite{2023 Fan-Wang-Li-arXiv} fully utilized the damping mechanism of the pressure and demonstrated that under the same conditions, the global strong solutions converge to equilibrium at an exponential rate as $T\rightarrow\infty$. Moreover, their results also hold for periodic domains, which improve upon the asymptotic behavior results of Huang-Li \cite{2016 Huang-Li-JMPA}. For the Dirichlet problem, under the assumption of radial symmetry, Huang-Su-Yan-Yu \cite{2023 Huang-Su-Yan-Yu} provided a representation of the effective viscous flux boundary term and demonstrated the existence and uniqueness of global strong solutions under the condition $\beta>1$. Furthermore, they showed that under the condition $\max\left\{1,\frac{\gamma+2}{4}\right\}<\beta\leq \gamma$, the solutions converge to an equilibrium state as $T\rightarrow \infty$. Later, Huang-Yan \cite{2023 Huang-Yan} removed the restriction \(\beta \leq \gamma\) when considering the asymptotic behavior of strong solutions in the MHD system. 

In this paper, we consider the global existence and asymptotic behavior of 2D radially symmetric strong solutions with swirl. Under our settings, the domain, density field, and velocity field have the following radially symmetric representations:
\begin{align*}
\Omega=B_R,\quad \rho(x,t)=\rho(r,t),\quad \mathbf{u}(x,t)=u_r(r,t)\mathbf{e_r}+u_\theta(r,t)\mathbf{e_\theta},
\end{align*}
where $\mathbf{e_r}=\left(\frac{x_1}{r},\frac{x_2}{r}\right)$ and $\mathbf{e_\theta}=\left(\frac{x_2}{r},-\frac{x_1}{r}\right)$ represent the radial vector and the angular vector respectively. The initial boundary value problem \eqref{Equ1}-\eqref{boundary condition1} is transformed into 
\begin{align}\label{Equ 2}
\left\{ 
\begin{array}{ll}
\rho_t+\partial_r(\rho u_r)+\frac{\rho u_r}{r}=0,\\
(\rho u_r)_t+\partial_r(\rho u_r^2)+\frac{\rho}{r}(u^2_r-u_\theta^2)=\partial_r((2\mu+\lambda)(\partial_r u_r+\frac{u_r}{r})-P),\\
(\rho u_\theta)_t+\partial_r(\rho u_ru_\theta)+\frac{2\rho}{r}u_ru_\theta=\mu(\partial_{rr}u_\theta+\frac{\partial_r u_\theta}{r}-\frac{u_\theta}{r^2}).
\end{array}
\right.
\end{align}
with the initial data
\begin{align}\label{inital data2}
\rho(r,0)=\rho_0(r),\quad u_r(r,0)=u_{r_0}(r),\quad u_\theta(r,0)=u_{\theta_0}(r),
\end{align}
and the Dirichlet boundary condition
\begin{align}\label{boundary condition2}
u_r(0,t)=u_r(R,t)=u_\theta(0,t)=u_\theta(R,t)=0.
\end{align}

Before stating our main results, we first clarify the following notations. For a positive integer $k$ and $p\ge 1$, we denote the standard Lebesgue and Sobolev spaces as follows:
\begin{align*}
L^p=L^p(\Omega),\quad W^{k,p}=W^{k,p}(\Omega),\quad H^k=W^{k,2}(\Omega).
\end{align*}
For a scalar function \(f\), define the material derivative of \(f\) as
\begin{align*}
\dot{f}=\frac{D}{Dt}f=\partial_t f+\mathbf{u}\cdot \nabla f,
\end{align*}
and the average value of the integral as
\begin{align*}
\bar{f}=\frac{1}{|\Omega|}\int_\Omega fdx.
\end{align*}
For the vector-valued function \(\mathbf{u}\), define
\begin{align*}
\div \mathbf{u}=\partial_1 u_1+\partial_2 u_2,\quad w=\nabla^\perp\cdot \mathbf{u}=(\partial_2,-\partial_1)\cdot(u_1,u_2)=\partial_2 u_1-\partial_1 u_2.
\end{align*}

Now we state the first result concerning the global existence and asymptotic behavior of the strong solutions for large initial data when $\beta\ge 1$.

\begin{thm}\label{Thm1}
Suppose that 
\begin{align*}
\beta\ge 1, \quad \gamma>1,
\end{align*}
and the initial data $(\rho_0, \mathbf{u_0})$ satisfy
\begin{align*}
0\le\rho_0\in W^{1,q},\quad \mathbf{u_0} \in H_0^1,
\end{align*}
for some $q>2$, and satisfy the following compatibility condition
\begin{align*}
-\mu\Delta \mathbf{u_0}-\nabla((\mu+\lambda(\rho_0))\div\mathbf{u_0})+\nabla P(\rho_0)=\sqrt{\rho_0}g,
\end{align*}
for some $g\in L^2$. Then the problem \eqref{Equ 2}-\eqref{boundary condition2} has a unique strong solution satisfying 
\begin{align*}
\rho\in C([0,T];W^{1,q}), \quad \mathbf{u} \in C([0,T]; H^1),
\end{align*}
and 
\begin{align}\label{regularity}
\left\{ 
\begin{array}{lc}
\norm{\sqrt{\rho}\mathbf{u}}_{L^\infty L^2}+\norm{\rho}_{L^\infty L^\gamma}+\norm{\nabla \mathbf{u}}_{L^2L^2}\leq C,\\
\norm{\sqrt{\rho}\dot{\mathbf{u}}}_{L^\infty L^2}+\norm{\nabla \dot{\mathbf{u}}}_{L^2L^2}\leq C.
\end{array}
\right.
\end{align}
for any $0<T<\infty$. Moreover, there exists a constant $C>0$ depending on $\mu,\beta,\gamma,\Omega,\norm{\rho_0}_{L^\infty}$ and $\norm{\mathbf{u_0}}_{H^1}$ but not on time $T$ such that
\begin{align*}
\sup_{0\leq t<\infty}\norm{\rho(t)}_{L^\infty}\leq C,
\end{align*}
and the following asymptotic behaviors hold
\begin{align}\label{long time behaviors}
\lim\limits_{t\rightarrow\infty}(\norm{\rho-\rho_s}_{L^p}+\norm{\nabla\mathbf{u}}_{L^p})=0
\end{align}
for any $1\le p<\infty$, where 
\begin{align*}
\rho_s=\frac{1}{|\Omega|}\int_\Omega\rho_0dx.
\end{align*}
\end{thm}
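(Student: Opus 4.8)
The plan is to follow the standard programme for global existence of strong solutions with vacuum: first establish local existence via the Cho--Choe--Kim type argument, then derive a priori estimates that are uniform in time so as to continue the solution globally, and finally extract the asymptotic behaviour from those estimates. The local existence and uniqueness of a strong solution on a short interval $[0,T_*]$ with the stated regularity follows from the compatibility condition and known results; the whole game is in the a priori bounds. I would work throughout in the radial variables, using the system \eqref{Equ 2}, and keep careful track of the weight $r$ coming from the two-dimensional measure $r\,dr$.

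The first batch of estimates is energy-type. Multiplying the momentum equations by $u_r$ and $u_\theta$ respectively and integrating in $r\,dr$ gives the basic energy identity controlling $\norm{\sqrt\rho\,\mathbf u}_{L^\infty L^2}$, $\norm{\rho}_{L^\infty L^\gamma}$ and $\norm{\nabla\mathbf u}_{L^2 L^2}$, together with a dissipation term for the swirl; the decay of the pressure potential $\int(\rho^\gamma - \rho_s^\gamma - \gamma\rho_s^{\gamma-1}(\rho-\rho_s))$ gives the damping that will ultimately force convergence to $\rho_s$. The crucial and most delicate step is the \emph{time-independent} upper bound on the density. Here I would introduce the effective viscous flux $F = (2\mu+\lambda(\rho))(\partial_r u_r + \tfrac{u_r}{r}) - P(\rho) + \overline{P}$ (adapted to the Dirichlet setting, using the representation of the effective viscous flux boundary term in the spirit of Huang--Su--Yan--Yu), rewrite the continuity equation along particle trajectories as an ODE for $\rho$ (or for $\rho^\beta/\beta$) driven by $F$, and use the structure $2\mu+\lambda=2\mu+\rho^\beta$ with $\beta\ge1$ to close a Gr\"onwall/Zlotnik-type argument. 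The condition $\beta\ge1$ is exactly what makes the bulk viscosity strong enough at large density to absorb the pressure growth without any smallness of the data; this is where I expect the main obstacle, since one must bound $F$ in $L^\infty_t L^\infty_x$ (or in a BMO-type norm) uniformly in time, which in turn requires the higher-order estimate on $\sqrt\rho\,\dot{\mathbf u}$.

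With the density bounded above (and hence all the coefficients comparable to constants plus controlled powers), I would next close the estimate on $\norm{\sqrt\rho\,\dot{\mathbf u}}_{L^\infty L^2} + \norm{\nabla\dot{\mathbf u}}_{L^2 L^2}$ by differentiating the momentum equations in $t$ (material derivative), testing with $\dot{\mathbf u}$, and handling the commutator terms; the swirl component $u_\theta$ contributes lower-order terms of the form $\tfrac{\rho}{r}u_r u_\theta$ that are controlled by the two-dimensional Hardy inequality and the already-established bounds, so the presence of swirl does not destroy the scheme. These bounds give the regularity \eqref{regularity} and, via the standard continuation criterion, global existence. Finally, for the asymptotic behaviour: the uniform bounds imply that $\int_0^\infty (\norm{\nabla\mathbf u}_{L^2}^2 + \tfrac{d}{dt}\text{(something bounded)})\,dt<\infty$, so along a sequence $t_n\to\infty$ the dissipation tends to zero; combined with uniform higher regularity and an argument ruling out oscillation (using the monotone decay of the pressure potential and the fact that $\rho_s$ is the unique minimiser), one upgrades this to $\norm{\rho-\rho_s}_{L^p}+\norm{\nabla\mathbf u}_{L^p}\to0$ for every finite $p$, interpolating between the $L^1$ (or $L^\gamma$) convergence and the $L^\infty$ bound on $\rho$. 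I would present the density upper bound as the key lemma and treat the rest as adaptations of by-now-standard arguments.
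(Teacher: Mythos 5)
Your high-level plan (energy identity, effective viscous flux, density upper bound via Zlotnik, then higher-order estimates and dissipation for the long-time behaviour) is the right skeleton, and it matches the paper's programme. But the sketch is missing the two ideas that are actually the content of the theorem. First, you gloss over how to estimate $\nabla G$ and $\nabla\omega$. In the presence of swirl the boundary condition is a genuine Dirichlet condition, not a Navier-slip one, so there are no boundary conditions for $G$ or $\omega$ and the usual elliptic estimates for $\Delta G=\div(\rho\dot{\mathbf u})$ and $\mu\Delta\omega=\nabla^\perp\cdot(\rho\dot{\mathbf u})$ are not available. The paper resolves this by exploiting the radial structure: $\nabla G=\partial_r G\,\mathbf e_r$ and $\nabla^\perp\omega=\partial_r\omega\,\mathbf e_\theta$, so projecting $\rho\dot{\mathbf u}=\nabla G+\mu\nabla^\perp\omega$ onto $\mathbf e_r$ and $\mathbf e_\theta$ decouples them pointwise and gives $\norm{\nabla G}_{L^p}\le\norm{\rho\dot{\mathbf u}}_{L^p}$ and $\mu\norm{\nabla^\perp\omega}_{L^p}\le\norm{\rho\dot{\mathbf u}}_{L^p}$ with no boundary data. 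Without some substitute for this step your argument stalls at the $A_1$-type differential inequality. Relatedly, your claim that the key is to bound $F$ in $L^\infty_t L^\infty_x$ is off target: the paper never bounds $G$ pointwise; it adds the potential $\xi=\int_R^r\rho u_r\,ds$ to $\theta$, so what actually needs control is $\norm{\xi}_{L^\infty L^\infty}$ together with the time-integral of the boundary value $G(R,t)$, which has a closed-form representation coming from multiplying the radial momentum equation by $r^2$.

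Second, and more seriously, your scheme does not close at the endpoint $\beta=1$, which is precisely the new regime of Theorem~1.1. For $\beta>1$ one can push the logarithmic estimate $\log(e+\norm{\nabla\mathbf u}_{L^2}^2)\lesssim R_T^{1+\varepsilon}$ through and win via Zlotnik; at $\beta=1$ the same chain only gives superlinear control and yields $R_T\le CR_T^{1+\varepsilon}$, which is vacuous. The paper instead establishes uniform higher integrability of the density via the Bogovskii operator (Propositions~\ref{new potential}, \ref{Prop Bogovskii}) and, crucially, a uniform bound on $\int_\Omega\rho\,|\mathbf u|^{2+\delta}dx$ with $\delta\sim R_T^{-1/2}$ (Proposition~\ref{prop u}), from which $\norm{\xi}_{L^\infty L^\infty}\lesssim R_T^{9/10}$ follows by a weighted H\"older estimate and Zlotnik then closes with $R_T\le C+CR_T^{9/10}$. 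Nothing in your proposal supplies these ingredients, and the assertion that ``$\beta\ge1$ is exactly what makes the bulk viscosity strong enough'' elides the fact that $\beta=1$ requires a genuinely different mechanism. You should either build in the $\xi$-potential and the Bogovskii/higher-velocity-integrability machinery, or restrict your sketch to $\beta>1$ and acknowledge the endpoint needs new ideas.
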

Theorem 1.1 is a comprehensive improvement on existing results, mainly reflected in the following 4 remarks.
\begin{rmk}
In their paper \cite{2023 Huang-Su-Yan-Yu}, Huang-Su-Yan-Yu proved the global existence of strong solutions to the problem \eqref{Equ 2}-\eqref{boundary condition2} without swirl when $\beta>1$. To be precise, $u_\theta$ is equal to $0$. As for the endpoint case $\beta = 1$, Huang-Meng-Ni \cite{2025 Huang-Meng-Ni-JMAA} considered the global existence of strong solutions for the free boundary problem.
The global existence in Theorem \ref{Thm1} can be regarded as an extension of the results in \cite{2023 Huang-Su-Yan-Yu} to the endpoint case $\beta=1$ and the case with swirl.
\end{rmk}
\begin{rmk}
It is worth noting that, in the absence of swirl, the Dirichlet boundary condition is essentially a Navier-slip boundary condition, so the estimates of the effective viscous flux and vorticity can be obtained. However, in the presence of swirl, the Dirichlet boundary condition is no longer a Navier-slip boundary condition. Therefore, how to obtain the estimates of the effective viscous flux and vorticity in the absence of boundary conditions is the main obstacle we face.
\end{rmk}
\begin{rmk}
In \cite{2023 Huang-Su-Yan-Yu}, they obtained the asymptotic behavior $\eqref{long time behaviors}$  under the condition $\gamma>1, \max\left\{1,\frac{\gamma+2}{4}\right\}<\beta\leq \gamma$. In \cite{2023 Huang-Yan}, Huang-Yan removed the restriction $\beta\leq \gamma$ when considering the global existence and asymptotic behavior of strong solutions of the MHD system. When \(\beta > 1\), we remove the restriction $\beta>\frac{\gamma+2}{4}$ by using a key idea from \cite{2023 Fan-Wang-Li-arXiv}. Specifically, we also established the asymptotic behavior of the solution at the endpoint case \(\beta = 1\), which is essentially different from the case where \(\beta > 1\).
\end{rmk}
\begin{rmk}
The result also holds for the compressible radially symmetric Navier-Stokes equation without swirl.
\end{rmk}

Now we introduce our second result, the global existence and asymptotic behavior of strong solutions with small initial density when $0<\beta<1$. We need the following auxiliary functions:
{
\small
\begin{align*}
&\theta(a)=\theta_{\mu,\beta}(a)=2\mu\log a+\frac{1}{\beta}a^\beta,\quad\tilde{M}(a)=\tilde{M}_{R}(a)=\pi R^2 a,\\
&\tilde{E}(a)=\tilde{E}_{\gamma, R, \norm{\nabla \mathbf{u}_0}_{L^2}}(a)=\frac{\pi R^2}{\gamma-1}a^\gamma+\frac{R^2}{4}\norm{\nabla \mathbf{u}_0}_{L^2}^2a,\quad
\tilde{U}(a)=\tilde{U}_{R, \norm{\nabla \mathbf{u}_0}_{L^2}}(a)=\frac{\pi R^2}{(2\pi)^{\frac{3}{2}}}\norm{\nabla\mathbf{u}_0}_{L^2}^3 a,\\
&K(a)=K_{\mu, \beta,\gamma, R, \norm{\nabla \mathbf{u}_0}_{L^2}}(a)=\max\left\{2\mu\log a+\frac{1}{\beta}a^\beta,2\mu\log((\frac{(\gamma-1)\tilde{E}(a)}{\pi R^2})^{\frac{1}{\gamma}})+\frac{1}{\beta}(\frac{(\gamma-1)\tilde{E}(a)}{\pi R^2})^{\frac{\beta}{\gamma}}\right\}\\
&\qquad+2^{\frac{4}{3}}R^{\frac{1}{3}}\pi^{-\frac{1}{3}}(7\mu)^{\frac{2}{3\beta}}\left(\tilde{U}(a)+3\sqrt{2}7^{\frac{\gamma}{\beta}}\mu^{\frac{\gamma}{\beta}-1} R\tilde{E}(a)\right)^{\frac{1}{3}}\\
&\qquad+\frac{1}{2 \mu\pi}(7\mu)^{\frac{1}{\beta}}\tilde{E}(a)+\frac{1}{2\pi R}(\tilde{M}(a)+2\tilde{E}(a))+\frac{2}{(\pi R^2)^\beta(1-\beta)}\tilde{M}(a)^\beta+\frac{\tilde{M}(a) \tilde{E}(a)}{4\mu\pi^2 R^2}.
\end{align*}
}
The result is stated as follows:
\begin{thm}\label{Thm3}
Suppose that 
\begin{align*}
0<\beta<1, \quad \gamma>1,
\end{align*}
and the initial data $(\rho_0, \mathbf{u_0})$ satisfy
\begin{align*}
0\le\rho_0\in W^{1,q},\quad \mathbf{u_0} \in H_0^1,
\end{align*}
for some $q>2$, and satisfy the following compatibility condition
\begin{align*}
-\mu\Delta \mathbf{u_0}-\nabla((\mu+\lambda(\rho_0))\div\mathbf{u_0})+\nabla P(\rho_0)=\sqrt{\rho_0}g,
\end{align*}
for some $g\in L^2$. Let \(a_0\) satisfy
\begin{align}\label{def a_0}
K(a_0)=\theta\left(\frac{99}{100}(7\mu)^{\frac{1}{\beta}}\right).
\end{align}
If
\begin{align}\label{rho 0 small}
\norm{\rho_0}_{L^\infty}\leq a_0 ,
\end{align}
then the problem \eqref{Equ 2}-\eqref{boundary condition2} has a unique strong solution satisfying 
\begin{align*}
\rho\in C([0,T];W^{1,q}), \quad \mathbf{u} \in C([0,T]; H^1),\quad \sup_{0\leq t\leq T}\norm{\rho(t)}_{L^\infty}\leq (7\mu)^{\frac{1}{\beta}},
\end{align*}
the regularity \eqref{regularity} and the asymptotic behaviors \eqref{long time behaviors} hold.
\end{thm}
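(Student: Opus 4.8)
The plan is a continuity (open--closed) argument whose backbone is the \emph{time-uniform} density bound $\sup_{[0,T]}\norm{\rho}_{L^\infty}\le(7\mu)^{1/\beta}$. Local existence and uniqueness of a strong solution on a maximal interval $[0,T^{*})$ is the standard theory for data allowing vacuum (cf.\ \cite{2004 Cho-Choe-Kim-JMPA,2003 Choe-Kim-JDE}), and by the associated blow-up criterion it suffices to bound $\rho$ in $L^\infty$ and $\mathbf{u}$ in $H^1$ a priori on $[0,T]$, for each $T<T^{*}$, with constants independent of $T$. So I fix such a $T$, \emph{postulate} $\sup_{0\le t\le T}\norm{\rho(t)}_{L^\infty}\le(7\mu)^{1/\beta}$, and aim to recover at the end the strictly sharper bound $\sup_{[0,T]}\norm{\rho}_{L^\infty}\le\tfrac{99}{100}(7\mu)^{1/\beta}$; since $K>\theta$ everywhere forces $\theta(a_0)<K(a_0)=\theta(\tfrac{99}{100}(7\mu)^{1/\beta})$, i.e.\ $a_0<(7\mu)^{1/\beta}$, the set of admissible $T$ is nonempty, closed and open in $[0,T^{*})$, hence all of it, and combined with the estimates below this yields $T^{*}=\infty$. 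Under the postulate, testing the radial and swirl momentum equations in \eqref{Equ 2} by $u_r$ and $u_\theta$ gives the basic energy identity
\[
\frac{d}{dt}\int_\Omega\Big(\tfrac12\rho\abs{\mathbf{u}}^2+\tfrac{1}{\gamma-1}\rho^\gamma\Big)dx+\int_\Omega\Big(\mu\abs{\nabla\mathbf{u}}^2+(\mu+\rho^\beta)(\div\mathbf{u})^2\Big)dx=0
\]
and, with conservation of mass $\int_\Omega\rho\,dx=\int_\Omega\rho_0\,dx$, the Poincar\'{e} inequality, and a Gagliardo--Nirenberg bound for $\norm{\mathbf{u}_0}_{L^3}$, controls $\sup_{[0,T]}(\norm{\sqrt{\rho}\,\mathbf{u}}_{L^2}^2+\norm{\rho}_{L^\gamma}^\gamma)+\int_0^T\norm{\nabla\mathbf{u}}_{L^2}^2\,dt$ by $\tilde M,\tilde E,\tilde U$ evaluated at $\norm{\rho_0}_{L^\infty}$; this is why those auxiliary functions appear in \eqref{def a_0}.

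\textbf{Effective viscous flux, vorticity and the density bound (the crux).} Write $F=(2\mu+\rho^\beta)\div\mathbf{u}-P$ for the effective viscous flux and $w=\partial_r u_\theta+\tfrac{u_\theta}{r}$ for the vorticity. Rewriting the radial and swirl equations with the continuity equation gives $\partial_r F=\rho\dot u_r-\tfrac{\rho u_\theta^2}{r}$ and $\mu\,\partial_r w=\rho\dot u_\theta+\tfrac{\rho u_r u_\theta}{r}$ (here $\dot u_r=\partial_t u_r+u_r\partial_r u_r$, similarly $\dot u_\theta$), and multiplying by $r^2$ and integrating from the origin yields the boundary-term-free representations
\[
r^2F(r,t)=2\!\int_0^r\! sF\,ds+\int_0^r\! s^2\rho\dot u_r\,ds-\int_0^r\! s\rho u_\theta^2\,ds,\qquad
r^2 w(r,t)=2ru_\theta+\frac1\mu\int_0^r\! s\rho\,\tfrac{D}{Dt}(su_\theta)\,ds .
\]
The first is made useful through $\int_\Omega F\,dx=\tfrac{1}{1-\beta}\tfrac{d}{dt}\int_\Omega\rho^\beta\,dx-\int_\Omega\rho^\gamma\,dx$, valid precisely because $\beta\neq1$ (the source of the $\tfrac1{1-\beta}$ in $K$). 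Next I use the transport identity
\[
\tfrac{D}{Dt}\theta(\rho)=-(F+P),\qquad \theta(a)=2\mu\log a+\tfrac1\beta a^\beta ,
\]
so that $\theta$ is strictly increasing and an upper bound for $\theta(\rho)$ is one for $\rho$. Integrating along particle paths from a base point where $\rho$ is comparable to its mean (which produces the entry of $K$ that is $\theta$ evaluated at $((\gamma-1)\tilde E/\pi R^2)^{1/\gamma}$) and inserting the two representations together with Hardy-- and Gagliardo--Nirenberg-type bounds for $\int_0^R\tfrac{\rho u_\theta^2}{s}\,ds$ and $\int_0^R\abs{\rho\dot u_r}\,ds$, one arrives at $\sup_{[0,T]}\theta(\rho)\le K(\norm{\rho_0}_{L^\infty})$, the remaining summands of $K$ tracking the separate contributions (the energy term $\tfrac1{2\mu\pi}(7\mu)^{1/\beta}\tilde E$, the mass/energy terms, the cubic Gagliardo--Nirenberg term, and so on). This is the main obstacle: once $u_\theta\not\equiv0$ the Dirichlet condition is \emph{not} a Navier-slip condition, so neither $F$ nor $w$ has prescribed boundary data, and the whole purpose of the $r^2$-weighted representations is to trade the missing boundary values for the globally controlled objects $\int_\Omega F\,dx$ and $ru_\theta$; moreover keeping all the weighted integrals bounded \emph{uniformly in $T$} (rather than $O(\sqrt T)$) forces one to exploit the pressure damping, in the spirit of \cite{2023 Fan-Wang-Li-arXiv}. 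Since $K$ is increasing and $\norm{\rho_0}_{L^\infty}\le a_0$ with $K(a_0)=\theta(\tfrac{99}{100}(7\mu)^{1/\beta})$, this gives $\sup_{[0,T]}\theta(\rho)\le\theta(\tfrac{99}{100}(7\mu)^{1/\beta})$, hence $\sup_{[0,T]}\norm{\rho}_{L^\infty}\le\tfrac{99}{100}(7\mu)^{1/\beta}<(7\mu)^{1/\beta}$, which closes the continuity argument.

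\textbf{Higher-order estimates, globalization and asymptotics.} With $2\mu\le2\mu+\rho^\beta\le9\mu$ now secured --- so the flux operator is uniformly elliptic for \emph{every} $\beta>0$ --- the bounds $\sup_{[0,T]}\norm{\sqrt{\rho}\,\dot{\mathbf{u}}}_{L^2}^2+\int_0^T\norm{\nabla\dot{\mathbf{u}}}_{L^2}^2\,dt\le C$ and $\sup_{[0,T]}\norm{\nabla\rho}_{L^q}\le C$ follow by testing the momentum equations with $\dot{\mathbf{u}}$ and differentiating the continuity equation, exactly as in the case $\beta\ge1$ (Theorem \ref{Thm1}), with $T$-independent constants; the blow-up criterion then yields $T^{*}=\infty$ and the regularity \eqref{regularity}. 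Finally, $\int_0^\infty\norm{\nabla\mathbf{u}}_{L^2}^2\,dt<\infty$ from the energy identity, combined with a uniform bound on $\tfrac{d}{dt}\norm{\nabla\mathbf{u}}_{L^2}^2$ coming from the higher-order estimates, gives $\norm{\nabla\mathbf{u}}_{L^2}\to0$, which interpolation upgrades to $\norm{\nabla\mathbf{u}}_{L^p}\to0$ for every $1\le p<\infty$; combining the transport identity for $\theta(\rho)$ with the damping $\int_0^\infty\int_\Omega(\mu+\rho^\beta)(\div\mathbf{u})^2\,dx\,dt<\infty$ then yields $\norm{\rho-\rho_s}_{L^p}\to0$, where $\rho_s=\frac{1}{|\Omega|}\int_\Omega\rho_0\,dx$ by conservation of mass. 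This establishes \eqref{long time behaviors} and completes the proof.
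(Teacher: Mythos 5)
Your proposal matches the paper's approach: a continuity/bootstrap argument where, postulating $\sup_{[0,T]}\norm{\rho}_{L^\infty}\le(7\mu)^{1/\beta}$, the Zlotnik-type transport argument combined with the $r^2$-weighted representation of the effective viscous flux and the uniform-in-$T$ bound on $\int_\Omega\rho\abs{\mathbf u}^3\,dx$ yields $\sup\theta(\rho)\le K(\norm{\rho_0}_{L^\infty})\le\theta\left(\tfrac{99}{100}(7\mu)^{1/\beta}\right)$, closing the bootstrap exactly as in Propositions \ref{u high2}--\ref{Tilda R_T}. One step as written needs care: $\int_0^R\abs{\rho\dot u_r}\,ds$ is \emph{not} directly controlled uniformly in $T$; instead, as in Lemma \ref{F boundary}, one rewrites $\int_0^R s^2\rho\dot u_r\,ds=\tfrac{d}{dt}\int_0^R s^2\rho u_r\,ds-\int_0^R s\rho u_r^2\,ds$ so that, after integrating the boundary flux $G(R,\cdot)$ in time, this term contributes only an endpoint difference of the energy-bounded quantity $\int_0^R s^2\rho u_r\,ds$ plus a term bounded by $\int_0^T\norm{\nabla\mathbf u}_{L^2}^2\,dt$ — your $r^2$-identity encodes this, but quoting a bound on $\int_0^R\abs{\rho\dot u_r}\,ds$ itself is not available.
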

\begin{rmk}
We note that \(K\) is a strictly monotonically increasing function defined on \((0, \infty)\), and \(K(0+)=-\infty\). Therefore, \(a_0\) is well-defined.
\end{rmk}
\begin{rmk}
Once the parameters \(\mu\), \(\beta\), \(\gamma\), \(R\) and the initial value \(\mathbf{u}_0\) are given, we can explicitly compute \(a_0\) by \eqref{def a_0}.
\end{rmk}

We now comment on the analysis of this paper. The previous literature \cite{2016 Huang-Li-JMPA}, \cite{2022 Fan-Li-Li-ARMA} and \cite{2023 Huang-Su-Yan-Yu} shows that the strong solutions of the system $\eqref{Equ1}$-$\eqref{boundary condition1}$ blow up in finite time, typically due to the blow up of the \(L^\infty\) norm of the density. Therefore, obtaining the upper bound for the density is the key to obtaining global solutions. We are interested in understanding the role that swirl plays in preventing density blow-up within our model. Is it positive? The answer is yes. Rewriting the transport equation as
\begin{align}\label{transport}
(\theta+\xi)_t+u_r\partial_r(\theta+\xi)+\int_R^r\frac{\rho}{s}(u_r^2-u_\theta^2)ds+P-\bar{P}+G(R,t)=0,
\end{align}
where 
\begin{align*}
\theta(\rho)=2\mu\log\rho+\frac{1}{\beta}\rho^\beta,\quad\xi=\int_R^r\rho u_rds,\quad G=(2\mu+\lambda)\div\mathbf{u}-(P-\bar{P}).
\end{align*}
The sign-preserving property of $u_\theta^2$ indicates that, in some sense, swirl and pressure are similar, as both can prevent finite-time density blow-up. In addition, we also need to estimate the other terms in $\eqref{transport}$. 

In order to derive the uniform upper bound of the density independent of time, we divide the proof into four cases.

\begin{enumerate}

\item 
When \(\beta>\max\left\{1,\frac{\gamma+2}{4}\right\}\), we adapt the method from \cite{2016 Huang-Li-JMPA} and \cite{2023 Huang-Su-Yan-Yu}. First, by taking advantage of the special structure of equation $\eqref{Equ 2}_2$ under radial symmetry, we provide the boundary representation of the effective viscous flux \(G\) (see Lemma \ref{F boundary}). Then, to estimate \(\norm{\xi}_{L^\infty L^\infty}\), we prove that the growth of \(\log (e+\norm{\nabla \mathbf{u}}_{L^2})\) is at most \(R_T^{1+\kappa+\varepsilon}\), where $R_T=\sup_{0\le t\le T}\norm{\rho(t)}_{L^\infty}+1$ and \(\kappa=\max\left\{0,\gamma-2\beta, \beta-\gamma-2\right\}\) (see Proposition \ref{suplog}). At this point, the lack of boundary conditions for the effective viscous flux \(G\) and the vorticity \(\omega\) becomes our biggest challenge. We note that $\nabla G=\partial_r G\mathbf{e_r}$ and $\nabla^\perp w=\partial_r w\mathbf{e_\theta}$, which allows us to directly take the inner product of the momentum equation with \(\mathbf{e_r}\) and \(\mathbf{e_\theta}\) respectively to decouple \(G\) and \(\omega\). Therefore, for any $1\leq p<\infty$, we get
\begin{align*}
&\norm{\nabla G}_{L^p}=\norm{\nabla G\cdot \frac{\mathbf{x}}{r}}_{L^p}=\norm{\rho\dot{\mathbf{u}}\cdot\frac{\mathbf{x}}{r}}_{L^p}\leq \norm{\rho\dot{\mathbf{u}}}_{L^p},\\
&\mu\norm{\nabla^\perp \omega}_{L^p}=\mu\norm{\nabla^\perp \omega\cdot\frac{\mathbf{x}^\perp}{r}}_{L^p}=\norm{\rho\dot{\mathbf{u}}\cdot\frac{\mathbf{x^\perp}}{r}}_{L^p}\leq \norm{\rho\dot{\mathbf{u}}}_{L^p}.
\end{align*}
Using the estimate of \(\log(e+\norm{\nabla \mathbf{u}}_{L^2})\), we prove that the growth of \(\norm{\xi}_{L^\infty L^\infty}\) will not exceed \(R_T^{1+\frac{\kappa}{2}+\varepsilon}\) through a logarithmic-type embedding inequality. Therefore, using standard methods, we have proven the uniform upper bound of the density (see Proposition \ref{uniform bound1}).

\item 
 When \(1< \beta\leq \gamma\), we first prove that $\left|\int_0^t G(R,\tau)d\tau\right|$ has a time-independent bound (see Proposition \ref{G boundary}). Then, inspired by \cite{2023 Fan-Wang-Li-arXiv}, by fully utilizing the damping mechanism of \(P\), we prove the time-independent higher integrability of \(\rho\) (see Proposition \ref{new potential}). This ensures that the growth of \(\log(e+\norm{\nabla \mathbf{u}}_{L^2})\) is at most \(R_T^{1+\varepsilon}\) (see Proposition \ref{new log}), which improves the estimate of $\xi$. Finally, using the Zlotnik inequality, we obtain the uniform upper bound of the density (see Proposition \ref{new upper bound}). Note that \(\gamma>\frac{\gamma+2}{4}\), so we obtain the asymptotic behavior of the solution when $\beta>1$.

 \item 
When $\beta=1$, the methods mentioned above become ineffective. Because it seems we can only achieve control of \(\log(e+\norm{\nabla \mathbf{u}}_{L^2})\) by superlinear growth of \(R_T\), rather than sublinear growth. This results in \(\norm{\xi}_{L^\infty L^\infty}\) being controlled only by the superlinear growth of \(R_T\). Therefore, We can only obtain
\begin{align*}
R_T\leq CR_T^{1+\varepsilon},
\end{align*}
which leads to a failure in obtaining the upper bound on the density. Therefore, to overcome this difficulty, we changed the way of estimating $\xi$. First, utilizing the radially symmetric structure and delicate index analysis, we prove that a uniform $L^p$ estimate for the density still holds in the endpoint case (see Proposition \ref{new potential}). This relaxes the restriction of $\beta > 1$ when establishing the uniform $L^p$ estimate for the density in \cite{2023 Fan-Wang-Li-arXiv}. Next, we used the idea from \cite{2023 Fan-Wang-Li-arXiv} to prove that \(\int_\Omega\rho|\mathbf{u}|^{2+\delta}dx\) has a time-independent bound, where $\delta=C(\mu,\gamma)R_T^{-\frac{1}{2}}$ and $C(\mu,\gamma)<\max\left\{\frac{1}{2},\frac{\gamma-1}{2(\gamma+1)}\right\}$ (see Proposition \ref{prop u}). Then, we estimate \(\norm{\xi}_{L^\infty L^\infty}\) as follows:
 \begin{align*}
\norm{\xi(t)}_{L^\infty}&\leq \int_0^{R}|\rho u_r|ds\leq \int_0^{R}\left(s\rho |u_r|^{2+\delta}\right)^{\frac{1}{2+\delta}}\rho^{\frac{1+\delta}{2+\delta}}s^{-\frac{1}{2+\delta}}ds\\
&\leq C\norm{\rho(t)}_{L^\infty}^{\frac{1+\delta}{2+\delta}}\left(\int_0^{R} s^{-\frac{1}{1+\delta}}ds\right)^{\frac{1+\delta}{2+\delta}}\leq CR_T^{\frac{1+\delta}{2+\delta}}\delta^{-\frac{1+\delta}{2+\delta}}\\
&\leq CR_T^{\frac{3}{2}\frac{1+\delta}{2+\delta}}\leq CR_T^{\frac{9}{10}}.
\end{align*}
With the estimate of \(\xi\) in hand, we obtained the uniform upper bound of the density using standard methods (see Proposition \ref{rho upper}).

\item 
When $\beta\in(0,1)$, we first assume \(\tilde{R}_T \leq (7\mu)^{\frac{1}{\beta}}\), where $\tilde{R}_T=\sup_{0\le t\le T}\norm{\rho(t)}_{L^\infty}$. Under this restriction, we use the fine structure of 2D radial symmetry to prove that \(\int_\Omega\rho |\mathbf{u}|^3dx\) has a time-independent bound that can be controlled by \(\norm{\rho_0}_{L^\infty}\) (see Proposition \ref{u high2}), which ensures that \(\norm{\xi}_{L^\infty L^\infty}\) has a time-independent bound. Then, we carefully estimate the boundary integral of the effective viscous flux \(G\), proving that it can be controlled by the initial mass \(M_0\) and energy \(E_0\). With these estimates at hand, we employ the Zlotnik inequality to obtain 
{\small
\begin{align*}
\begin{split}
2\mu\log \tilde{R}_T+\frac{1}{\beta}\tilde{R}_T^\beta&\leq \max\left\{2\mu\log\norm{\rho_0}_{L^\infty}+\frac{1}{\beta}\norm{\rho_0}_{L^\infty}^\beta,2\mu\log((\frac{(\gamma-1)E_0}{\pi R^2})^{\frac{1}{\gamma}})+\frac{1}{\beta}(\frac{(\gamma-1)E_0}{\pi R^2})^{\frac{\beta}{\gamma}}\right\}\\
&\quad+2^{\frac{4}{3}}R^{\frac{1}{3}}\pi^{-\frac{1}{3}}(7\mu)^{\frac{2}{3\beta}}\left(\int_\Omega\rho_0|\mathbf{u}_0|^3dx+3\sqrt{2}7^{\frac{\gamma}{\beta}}\mu^{\frac{\gamma}{\beta}-1} RE_0\right)^{\frac{1}{3}}\\
&\quad+\frac{1}{2 \mu\pi}(7\mu)^{\frac{1}{\beta}}E_0+\frac{1}{2\pi R}(M_0+2E_0)+\frac{2}{(\pi R^2)^\beta(1-\beta)}M_0^\beta+\frac{M_0 E_0}{4\mu\pi^2 R^2}.\\
\end{split}
\end{align*}
}
We observe that as \(\norm{\rho_0}_{L^\infty} \to 0^+\), the first term on the right-hand side of the inequality tends to \(-\infty\), while the other terms tend to \(0\). Therefore, when \(\norm{\rho_0}_{L^\infty}\leq a_0(\mu,\beta,\gamma,R,\norm{\nabla \mathbf{u_0}}_{L^2})\), we prove \(\tilde{R}_T \leq \frac{99}{100}(7\mu)^{\frac{1}{\beta}}\) (see Proposition \ref{Tilda R_T}), which ensures that the solution does not blow up in finite time.
\end{enumerate}

After obtaining the upper bound estimate for the density, we used the standard method from \cite{2016 Huang-Li-JMPA} to derive estimates for the higher-order derivatives of the velocity field and the density field (see Proposition \ref{First},\ref{Second},\ref{Third}). Then, we constructed the initial values $(\rho^\delta_0,\mathbf{u}_0^\delta)$ for the approximating system and proved that the approximating solutions $(\rho^\delta,\mathbf{u}^\delta)$ are global. At this point, due to the requirements for the initial values of local existence (see Lemma \ref{local existence}), we must close the estimate for \(\|\nabla^2 \rho^\delta\|_{L^2}\). Because of the structure of the transport equation, this necessitates estimating \(\|\nabla^3 \mathbf{u}^\delta\|_{L^2}\). To achieve this, following Hoff's method, we need to estimate \(\|\nabla^2 G^\delta\|_{L^2}\) and \(\|\nabla^2 w^\delta\|_{L^2}\). The absence of boundary conditions for the effective viscous flux and vorticity once again presents a challenge. We observe that, under the radially symmetric setting, \(\nabla^2 G^\delta\) and \(\Delta G^\delta\) have the following representations:
\begin{align*}
\partial_{ij} G^\delta&=\partial_j\left(\partial_r G^\delta\frac{x_i}{r}\right)=\partial_{rr}G^\delta\frac{x_ix_j}{r^2}+\partial_r G^\delta\frac{\delta_{ij}r^2-x_ix_j}{r^3},
\end{align*}
and 
\begin{align*}
\Delta G^\delta=\partial_{rr}G^\delta+\frac{\partial_rG^\delta}{r}.
\end{align*}
This means we can control \(\nabla^2 G^\delta\) using \(\Delta G^\delta\) and a tail term. Specifically,
\begin{align*}
\begin{split}
\norm{\nabla^2 G^\delta}_{L^2}&\leq \hat{C}\left(\norm{\partial_{rr} G^\delta}_{L^2}+\norm{\frac{\partial_r G^\delta}{r}}_{L^2}\right)\leq \hat{C}\left(\norm{\Delta G^\delta}_{L^2}+\norm{\frac{\partial_r G^\delta}{r}}_{L^2}\right)\\
&\leq \hat{C}\left(\norm{\nabla(\rho^\delta\dot{\mathbf{u}}^\delta)}_{L^2}+\norm{\frac{\rho^\delta \dot{\mathbf{u}}^\delta}{r}}_{L^2}\right)\leq \hat{C}\norm{\nabla \dot{\mathbf{u}}^\delta}_{L^2},
\end{split}
\end{align*}
where we have used $\Delta G^\delta=\div(\rho^\delta\dot{\mathbf{u}}^\delta)$ and the following simple fact
\begin{align*}
|\nabla\dot{\mathbf{u}}^\delta|^2=(\partial_r \left<\dot{\mathbf{u}}^\delta, \mathbf{e_r}\right>)^2+(\partial_r \left<\dot{\mathbf{u}}^\delta, \mathbf{e_\theta}\right>)^2+\frac{|\left<\dot{\mathbf{u}}^\delta, \mathbf{e_r}\right>|^2}{r^2}+\frac{|\left<\dot{\mathbf{u}}^\delta, \mathbf{e_\theta}\right>|^2}{r^2}\ge\left|\frac{\dot{\mathbf{u}}^\delta}{r}\right|^2.
\end{align*}
The estimate for $\norm{\nabla^2\omega^\delta}_{L^2}$ is similar. After obtaining the global approximating solutions, we obtain \((\rho, \mathbf{u})\) through standard compactness arguments.

We organize the paper as follows: In Section 2, we collect some basic facts and inequalities used in the proof. In Section 3, we derive the necessary a priori estimates. Specifically, in Section 3.1, we derive the uniform upper bound of the density when \(\beta > \max\left\{1,\frac{\gamma+2}{4}\right\}\). In Section 3.2, we derive the uniform upper bound of the density when \(1<\beta \leq \gamma\). In Section 3.3, we derive the uniform upper bound of the density when \(\beta=1\). In Section 3.4, we derive the uniform upper bound of the density under the assumptions \(0 < \beta < 1\) and small initial density. In Section 3.5, we establish the higher-order estimates. In Section 4, we complete the proof of the main theorems.

\section{Preliminary}
In this section, we state some lemmas that will be used in the proof. The well-known result on local existence \cite{1993 Salvi-Straškraba,1980 Solonnikov-JMS} when the initial density is strictly away from vacuum could be stated as follows:
\begin{lema}\label{local existence}
Assume that $(\rho_0,\mathbf{u_0})$ satisfy
\begin{align*}
\inf_{x\in \Omega}\rho_0(x)>0,\quad\rho_0\in H^2, \quad\mathbf{u}_0\in H^2\cap H_0^1,
\end{align*}
Then there exists a small time $T>0$ and a constant $C_0>0$ depending only on $\Omega,\gamma,\beta,\mu,$ $\norm{(\rho_0,\mathbf{u}_0)}_{H^2},\inf_{x\in \Omega}\rho_0(x)$ such that there exists a unique strong solution $(\rho, \mathbf{u})$ to the problem \eqref{Equ1}-\eqref{boundary condition1} in $\Omega\times(0,T)$ satisfying 
\begin{align*}
\inf_{(x,t)\in \Omega\times(0,T)}\rho(x,t)>C_0.
\end{align*}
\end{lema}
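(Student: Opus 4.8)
\medskip
\noindent\textbf{Proof strategy.} The plan is to run the classical linearization-and-iteration argument, the point being that a strictly positive lower bound on the density is propagated along particle trajectories for a short time. Observe first that since $\inf_\Omega\rho_0>0$, $\rho_0\in H^2$ and $\mathbf{u}_0\in H^2$, the vector field $g:=\rho_0^{-1/2}\big(-\mu\Delta\mathbf{u}_0-\nabla((\mu+\lambda(\rho_0))\div\mathbf{u}_0)+\nabla P(\rho_0)\big)$ automatically lies in $L^2$, so the compatibility condition is available for free here; it is what makes the time-derivative estimate start. Set $\mathbf{u}^0\equiv\mathbf{u}_0$ (extended constantly in $t$). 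Given $\mathbf{u}^k$, define $\rho^{k+1}$ by solving the linear transport equation $\partial_t\rho^{k+1}+\mathbf{u}^k\cdot\nabla\rho^{k+1}+\rho^{k+1}\div\mathbf{u}^k=0$, $\rho^{k+1}(0)=\rho_0$, via characteristics; this gives $\rho^{k+1}\in C([0,T];H^2)$, $\partial_t\rho^{k+1}\in C([0,T];H^1)$ and the pointwise bounds
\[
\inf_\Omega\rho_0\cdot\exp\!\Big(-\!\int_0^t\!\norm{\div\mathbf{u}^k}_{L^\infty}ds\Big)\le\rho^{k+1}(x,t)\le\norm{\rho_0}_{L^\infty}\exp\!\Big(\int_0^t\!\norm{\div\mathbf{u}^k}_{L^\infty}ds\Big).
\]
Then define $\mathbf{u}^{k+1}$ by solving the linear parabolic system $\rho^{k+1}\partial_t\mathbf{u}^{k+1}+\rho^{k+1}\mathbf{u}^k\cdot\nabla\mathbf{u}^{k+1}-\mu\Delta\mathbf{u}^{k+1}-\nabla((\mu+\lambda(\rho^{k+1}))\div\mathbf{u}^{k+1})=-\nabla P(\rho^{k+1})$, $\mathbf{u}^{k+1}(0)=\mathbf{u}_0$, $\mathbf{u}^{k+1}|_{\partial\Omega}=0$; as long as $\rho^{k+1}$ is bounded above and below, this is uniformly parabolic of Lam\'e type with sufficiently regular coefficients, and standard linear theory (the basic energy estimate, the same estimate differentiated in $t$, then elliptic regularity for the stationary operator at each fixed time) produces $\mathbf{u}^{k+1}\in L^\infty(0,T;H^2\cap H_0^1)$, $\partial_t\mathbf{u}^{k+1}\in L^\infty(0,T;L^2)\cap L^2(0,T;H_0^1)$.

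Next I would establish uniform bounds: there exist $T_*>0$ and $R_*>0$, depending only on $\Omega,\gamma,\beta,\mu,\norm{(\rho_0,\mathbf{u}_0)}_{H^2}$ and $\inf_\Omega\rho_0$, such that for all $k$,
\[
\sup_{[0,T_*]}\!\Big(\norm{\rho^k}_{H^2}+\norm{\mathbf{u}^k}_{H^2}+\norm{\partial_t\mathbf{u}^k}_{L^2}\Big)+\!\int_0^{T_*}\!\!\norm{\nabla\partial_t\mathbf{u}^k}_{L^2}^2\,dt\le R_*,\qquad\inf_{\Omega\times[0,T_*]}\rho^k\ge C_0:=\tfrac12\inf_\Omega\rho_0 .
\]
This is the usual short-time absorption: each linear estimate yields its bound in the form $C(\mathrm{data})+T\,\mathcal{Q}(R_*)$ with $\mathcal{Q}$ a fixed polynomial, so one fixes $R_*$ large (depending on the data only) and then $T_*$ small; the lower bound on $\rho^k$ follows from the transport bounds once $T_*\norm{\div\mathbf{u}^k}_{L^1_tL^\infty_x}$ is small, using $\norm{\div\mathbf{u}^k}_{L^\infty}\lesssim\norm{\mathbf{u}^k}_{H^2}$ in $2$D. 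Then I would show the iteration map is a contraction in a low-order norm: subtracting consecutive equations and running the lowest-order energy estimates for $\bar\rho^{k+1}:=\rho^{k+1}-\rho^k$ and $\bar{\mathbf{u}}^{k+1}:=\mathbf{u}^{k+1}-\mathbf{u}^k$ (with all coefficients controlled by the uniform $H^2$ bounds together with $2$D Gagliardo--Nirenberg interpolation), after possibly shrinking $T_*$ one bounds the difference at level $k+1$ by one-half of the corresponding quantity at level $k$ in $C([0,T_*];L^2)\times\big(C([0,T_*];L^2)\cap L^2(0,T_*;H^1)\big)$. Hence $(\rho^k,\mathbf{u}^k)$ converges strongly in that space; interpolating with the uniform high-norm bounds and using weak-$*$ compactness, the limit $(\rho,\mathbf{u})$ lies in the strong-solution class, solves \eqref{Equ1}--\eqref{boundary condition1}, and inherits $\inf_{\Omega\times(0,T_*)}\rho\ge C_0$. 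Uniqueness follows by applying the same difference estimate to two solutions sharing the data.

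The only genuinely delicate point is the bound on $\partial_t\mathbf{u}^{k+1}$ near $t=0$. The scheme closes because at each fixed $t$ the momentum equation, read as an elliptic Lam\'e-type system for $\mathbf{u}^{k+1}$ with right-hand side $\rho^{k+1}\partial_t\mathbf{u}^{k+1}+\rho^{k+1}\mathbf{u}^k\cdot\nabla\mathbf{u}^{k+1}+\nabla P(\rho^{k+1})$, upgrades $\mathbf{u}^{k+1}$ from $H^1$ to $H^2$; but to control the first term one needs $\partial_t\mathbf{u}^{k+1}\in L^\infty_tL^2_x$, which comes from the $t$-differentiated energy estimate, and that estimate can only be initialized if $\partial_t\mathbf{u}^{k+1}(\cdot,0)$ --- which from the equation equals $(\rho^{k+1}(0))^{-1}=\rho_0^{-1}$ times $\mu\Delta\mathbf{u}_0+\nabla((\mu+\lambda(\rho_0))\div\mathbf{u}_0)-\nabla P(\rho_0)$ minus a convection term --- is in $L^2$. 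This is exactly where $\inf_\Omega\rho_0>0$ enters, through the automatic compatibility condition noted at the outset. All remaining ingredients are routine linear transport and parabolic estimates combined with the short-time absorption argument; see \cite{1993 Salvi-Straškraba,1980 Solonnikov-JMS} for the details.

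\medskip
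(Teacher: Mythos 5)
The paper does not prove this lemma; it cites \cite{1993 Salvi-Straškraba} and \cite{1980 Solonnikov-JMS} and moves on. Your sketch is the standard linearization-and-iteration argument those references (and Cho--Choe--Kim--type local theory) are built on, and the overall architecture --- transport step for $\rho^{k+1}$ via characteristics, linear Lam\'e parabolic step for $\mathbf{u}^{k+1}$, short-time absorption, low-order contraction, and your correct observation that $\inf_\Omega\rho_0>0$ together with $\rho_0,\mathbf{u}_0\in H^2$ makes the compatibility condition automatic --- is the right one.

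There is, however, one step that does not close as written. You claim the uniform bounds
\begin{align*}
\sup_{[0,T_*]}\big(\norm{\rho^k}_{H^2}+\norm{\mathbf{u}^k}_{H^2}+\norm{\partial_t\mathbf{u}^k}_{L^2}\big)+\int_0^{T_*}\norm{\nabla\partial_t\mathbf{u}^k}_{L^2}^2\,dt\le R_*
\end{align*}
suffice, invoking ``$\norm{\div\mathbf{u}^k}_{L^\infty}\lesssim\norm{\mathbf{u}^k}_{H^2}$ in $2$D.'' That embedding is false: in two dimensions $H^2\hookrightarrow W^{1,p}$ for every finite $p$, but $H^2\not\hookrightarrow W^{1,\infty}$ (one needs $H^s$ with $s>2$). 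So your bootstrap norm does not control $\int_0^{T_*}\norm{\nabla\mathbf{u}^k}_{L^\infty}\,dt$, which is exactly what you need both for the pointwise lower bound on $\rho^{k+1}$ and for the Gronwall factor in the $H^2$ estimate of the transport step; the latter also needs $\nabla^3\mathbf{u}^k\in L^1_tL^2_x$ to handle the forcing term $\rho^{k+1}\nabla^2\div\mathbf{u}^k$. The fix is to enlarge the bootstrap space: include $\int_0^{T_*}\norm{\mathbf{u}^k}_{H^3}^2\,dt\le R_*$. This comes precisely from the elliptic-regularity step you already mention --- read the momentum equation at fixed $t$ as a Lam\'e system for $\mathbf{u}^{k+1}$ with right-hand side $\rho^{k+1}\partial_t\mathbf{u}^{k+1}+\rho^{k+1}\mathbf{u}^k\cdot\nabla\mathbf{u}^{k+1}+\nabla P(\rho^{k+1})\in L^2_tH^1_x$ (using the $\nabla\partial_t\mathbf{u}^{k+1}$ estimate and the $H^2$ bound on $\rho^{k+1}$), which yields $\mathbf{u}^{k+1}\in L^2_tH^3_x$ --- and then $H^3\hookrightarrow W^{1,\infty}$ in $2$D gives $\nabla\mathbf{u}^{k+1}\in L^2_tL^\infty_x\subset L^1_tL^\infty_x$. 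With that extra norm added to $R_*$, the absorption argument closes and the rest of your outline is sound.
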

Due to the special structure of 2D radial symmetry, we have the following lemma to estimate $\norm{\mathbf{u}}_{L^\infty}$.
\begin{lema}\label{u inf}
	Assuming the Dirichlet boundary condition \eqref{boundary condition2} holds, then it comes that
	\begin{align}\label{u infty}
	\norm{\mathbf{u}(t)}_{L^\infty}\leq \frac{1}{\sqrt{2\pi}}\norm{\nabla \mathbf{u}(t)}_{L^2}.
	\end{align}
\end{lema}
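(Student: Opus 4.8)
The plan is to exploit the radially symmetric representation $\mathbf{u}(x,t)=u_r(r,t)\mathbf{e_r}+u_\theta(r,t)\mathbf{e_\theta}$, so that $|\mathbf{u}(x,t)|^2=u_r(r,t)^2+u_\theta(r,t)^2$ is a function of $r$ alone, and likewise $|\nabla\mathbf{u}|^2$ splits into a sum of radial contributions. The key point is that for a scalar profile $f=f(r)$ on $[0,R]$ with $f(0)=f(R)=0$ (which holds for both $u_r$ and $u_\theta$ by \eqref{boundary condition2}), one has the pointwise bound $f(r)^2 \le 2\int_0^R |f(s)||f'(s)|\,ds$, obtained by writing $f(r)^2=-\int_r^R (f^2)'\,ds=-2\int_r^R f f'\,ds$ and estimating crudely; but to get the clean constant $\tfrac{1}{\sqrt{2\pi}}$ I would instead weight by $s$ so the $L^2(\Omega)$ norms appear naturally with their $2\pi$ factors.

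Concretely, first I would reduce to a single scalar inequality: it suffices to show that $|\mathbf{u}(r_0,t)|^2 \le \tfrac{1}{2\pi}\int_\Omega |\nabla\mathbf{u}|^2\,dx$ for every $r_0\in(0,R)$. Writing $h(r)=|\mathbf{u}(r,t)|^2=u_r^2+u_\theta^2$, I have $h(R)=0$, so $h(r_0)=-\int_{r_0}^R h'(r)\,dr = -\int_{r_0}^R 2(u_r\partial_r u_r+u_\theta\partial_r u_\theta)\,dr$. Multiplying and dividing by $r$ and using Cauchy–Schwarz,
\begin{align*}
h(r_0) &\le 2\int_{r_0}^R \big(|u_r||\partial_r u_r|+|u_\theta||\partial_r u_\theta|\big)\,\frac{r}{r}\,dr\\
&\le 2\left(\int_{r_0}^R \frac{u_r^2+u_\theta^2}{r}\,dr\right)^{1/2}\left(\int_{r_0}^R \big((\partial_r u_r)^2+(\partial_r u_\theta)^2\big)\,r\,dr\right)^{1/2}.
\end{align*}
This is not quite in the right form because of the $1/r$ weight in the first factor; so instead I would integrate from $r_0$ toward $R$ after noting that on $[r_0,R]$ one has $1/r \le 1/r_0$... which reintroduces an $r_0$-dependence. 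The cleaner route, and the one I expect the paper uses, is the following: since $|\nabla\mathbf{u}|^2 \ge (\partial_r u_r)^2+(\partial_r u_\theta)^2 + \tfrac{u_r^2}{r^2}+\tfrac{u_\theta^2}{r^2}$ (the identity quoted later in the excerpt for $\dot{\mathbf{u}}$ applies verbatim to $\mathbf{u}$), I can bound $|u_r\partial_r u_r| \le \tfrac12\big(r(\partial_r u_r)^2 + \tfrac{u_r^2}{r}\big)$ and similarly for $u_\theta$, giving $h(r_0) \le \int_0^R \big(r|\nabla\mathbf{u}|^2\big)\,dr = \tfrac{1}{2\pi}\int_\Omega |\nabla\mathbf{u}|^2\,dx$, where the last equality is just the change to polar coordinates $\int_\Omega(\cdot)\,dx = 2\pi\int_0^R (\cdot)\,r\,dr$. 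Taking the square root and then the supremum over $r_0$ yields \eqref{u infty}.

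The only genuinely delicate point is the behavior at $r=0$: one must check that the boundary term at the origin vanishes, i.e.\ that $h(r)\to 0$ (equivalently $u_r,u_\theta\to 0$) as $r\to 0^+$, which is forced by $u_r(0,t)=u_\theta(0,t)=0$ in \eqref{boundary condition2} together with enough regularity of the solution for the fundamental-theorem-of-calculus step to be valid; for a strong solution $\mathbf{u}\in H^1$ with the stated radial structure this is standard, and the weighted integrals $\int_0^R \tfrac{u_r^2}{r}\,dr$ etc.\ are finite precisely because $\int_\Omega |\nabla\mathbf{u}|^2\,dx<\infty$ controls $\int_0^R \tfrac{u_r^2}{r}\,dr$ via the pointwise lower bound on $|\nabla\mathbf{u}|^2$. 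So the main obstacle is really just bookkeeping the endpoint $r=0$ and the weighted Hardy-type term; the inequality itself then falls out of Cauchy–Schwarz with the sharp constant coming for free from polar coordinates.
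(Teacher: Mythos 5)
Your proposal is correct and uses essentially the same idea as the paper: a one-dimensional fundamental-theorem-of-calculus step exploiting the Dirichlet conditions, an AM--GM bound $2ab\le a^2+b^2$ with the weights split as $\sqrt{r}\,\partial_r u$ and $u/\sqrt{r}$, and polar coordinates to produce the $\tfrac{1}{2\pi}$. The only cosmetic differences are that the paper integrates outward from $r=0$ (using $u_r(0)=u_\theta(0)=0$) and organizes the resulting integrands as $(\div\mathbf{u})^2\,s$ and $w^2\,s$ before summing via $\norm{\div\mathbf{u}}_{L^2}^2+\norm{w}_{L^2}^2=\norm{\nabla\mathbf{u}}_{L^2}^2$, whereas you integrate inward from $r=R$ and assemble the pointwise identity $r|\nabla\mathbf{u}|^2=r(\partial_r u_r)^2+r(\partial_r u_\theta)^2+u_r^2/r+u_\theta^2/r$ directly; both are valid and give the same constant.
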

\begin{proof}
Direct calculations show that
\begin{align*}
|u_r(r,t)|^2&=\int_0^r 2u_r\partial_r u_rds=\int_0^r2\frac{u_r}{s}\partial_r u_r sds\leq \int_0^r\left(\partial_r u_r+\frac{u_r}{s}\right)^2sds\leq \frac{1}{2\pi}\norm{\div \mathbf{u}(t)}_{L^2}^2,
\end{align*}
and 
\begin{align*}
|u_\theta(r,t)|^2&=\int_0^r 2u_\theta\partial_r u_\theta ds=\int_0^r2\frac{u_\theta}{s}\partial_r u_\theta sds\leq \int_0^r\left(\partial_r u_\theta+\frac{u_\theta}{s}\right)^2sds\leq \frac{1}{2\pi}\norm{w}_{L^2}^2.
\end{align*}
Therefore, we obtain
\begin{align*}
|\mathbf{u}(x,t)|^2=|u_r(r,t)|^2+|u_\theta(r,t)|^2\leq \frac{1}{2\pi}\norm{\div \mathbf{u}(t)}_{L^2}^2+\frac{1}{2\pi}\norm{w(t)}_{L^2}^2=\frac{1}{2\pi}\norm{\nabla \mathbf{u}(t)}_{L^2}^2.
\end{align*}
\end{proof}

The following Gagliardo-Nirenberg inequality will be used frequently.
\begin{lema}\cite{1975 Adams}\label{G-N ineq}
For any $2<q<\infty$ and $f\in H^1(\Omega)$, there exists a constant  $C>0$ such that
\begin{align}\label{GN1}
\norm{f}_{L^q}\leq C\norm{f}_{L^2}^{\frac{2}{q}}\norm{ f}_{H^1}^{1-\frac{2}{q}}, \quad \norm{f}_{L^q}\leq C\norm{f}_{L^2}^{\frac{2}{q}}\norm{\nabla f}_{L^2}^{1-\frac{2}{q}}+C|\bar{f}|.
\end{align}
If $f\in W^{1,q}(\Omega)$, then
\begin{align}\label{GN2}
\norm{f}_{L^\infty}\leq C\norm{f}_{L^2}^{\frac{q-2}{2q-2}}\norm{f}_{W^{1,q}}^{\frac{q}{2q-2}}.
\end{align}
In particular, if $f|_{\partial \Omega}=0$ or $\int_\Omega fdx=0$, then $\norm{f}_{H^1}$ and $\norm{f}_{W^{1,q}}$ in the above equation can be replaced by $\norm{\nabla f}_{L^2}$ and $\norm{\nabla f}_{L^q}$ respectively.
\end{lema}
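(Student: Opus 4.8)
The plan is to deduce everything from the corresponding scaling-invariant Gagliardo--Nirenberg inequalities on the full plane, using that $\Omega=B_R$ is a smooth (in particular Lipschitz) domain. First I would fix a bounded linear extension operator $E\colon W^{k,p}(\Omega)\to W^{k,p}(\R^2)$ with $Ef|_{\Omega}=f$ and $\norm{Ef}_{W^{k,p}(\R^2)}\le C\norm{f}_{W^{k,p}(\Omega)}$, the same $E$ serving for the finitely many pairs $(k,p)\in\{0,1\}\times\{2,q\}$ that appear below (Stein's extension works, and for a disk one may even reflect across the boundary). Then $\norm{f}_{L^s(\Omega)}\le\norm{Ef}_{L^s(\R^2)}$ for every $s$, so it suffices to prove the two basic inequalities on $\R^2$ and read them back on $\Omega$.

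The first ingredient is, for $2\le q<\infty$ and $g\in H^1(\R^2)$,
\begin{align*}
\norm{g}_{L^q(\R^2)}\le C\norm{g}_{L^2(\R^2)}^{2/q}\norm{\nabla g}_{L^2(\R^2)}^{1-2/q},
\end{align*}
which I would obtain from the two-dimensional Ladyzhenskaya inequality $\norm{h}_{L^4(\R^2)}^4\le 2\norm{h}_{L^2(\R^2)}^2\norm{\nabla h}_{L^2(\R^2)}^2$ — proved by writing $h(x_1,x_2)^2$ as a one-dimensional primitive in $x_1$ and in $x_2$ and applying Fubini — applied to $h=|g|^{q/2}$, followed by H\"older interpolation between $L^2$ and the resulting Lebesgue exponent. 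The second ingredient is, for $q>2$ and $g\in W^{1,q}(\R^2)$,
\begin{align*}
\norm{g}_{L^\infty(\R^2)}\le C\norm{g}_{L^2(\R^2)}^{(q-2)/(2q-2)}\norm{\nabla g}_{L^q(\R^2)}^{q/(2q-2)},
\end{align*}
which follows from Morrey's embedding $W^{1,q}(\R^2)\hookrightarrow C^{0,1-2/q}(\R^2)$ together with a scaling argument applied to $g(\lambda\,\cdot\,)$; the exponents $\tfrac{q}{2q-2}$ and $\tfrac{q-2}{2q-2}$ are precisely those forced by dimensional homogeneity. Combining these with the extension operator, and with $\norm{\nabla Ef}_{L^2(\R^2)}\le\norm{Ef}_{H^1(\R^2)}\le C\norm{f}_{H^1(\Omega)}$ (resp.\ $\norm{\nabla Ef}_{L^q(\R^2)}\le C\norm{f}_{W^{1,q}(\Omega)}$), immediately gives the first inequality in \eqref{GN1} and inequality \eqref{GN2}.

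For the second inequality in \eqref{GN1} I would apply the first one to $f-\bar f$: since $\overline{f-\bar f}=0$, the Poincar\'e--Wirtinger inequality gives $\norm{f-\bar f}_{H^1(\Omega)}\le C\norm{\nabla f}_{L^2(\Omega)}$, while $|\bar f|\le|\Omega|^{-1/2}\norm{f}_{L^2}$ yields $\norm{f-\bar f}_{L^2}\le 2\norm{f}_{L^2}$; hence $\norm{f-\bar f}_{L^q}\le C\norm{f}_{L^2}^{2/q}\norm{\nabla f}_{L^2}^{1-2/q}$, and adding $|\bar f|\,|\Omega|^{1/q}$ after the triangle inequality $\norm{f}_{L^q}\le\norm{f-\bar f}_{L^q}+|\bar f|\,|\Omega|^{1/q}$ gives the claim. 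For the final ``in particular'' assertion, if $f|_{\partial\Omega}=0$ the zero-extension of $f$ already belongs to $W^{1,p}(\R^2)$ with $\norm{\nabla(\mathrm{ext})}_{L^p(\R^2)}=\norm{\nabla f}_{L^p(\Omega)}$, so the plane inequalities transfer with $\norm{\nabla f}$ in place of $\norm{f}_{H^1}$ and $\norm{f}_{W^{1,q}}$; if instead $\int_\Omega f\,dx=0$ one invokes Poincar\'e--Wirtinger directly in the estimates just proved. I do not expect a genuine obstacle here: the only point needing attention is tracking the scaling exponents in the $L^\infty$ inequality and selecting, in each of the four variants, the extension or Poincar\'e inequality that reproduces exactly the stated right-hand side.
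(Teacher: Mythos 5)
The paper does not prove this lemma at all: it is quoted as a standard fact and referred to Adams \cite{1975 Adams}, so there is no argument in the paper to compare yours against. Your self-contained proof via a bounded extension operator to $\R^2$, the scale-invariant Gagliardo--Nirenberg inequality on the plane, Morrey's embedding plus a scaling optimization, Poincar\'e--Wirtinger for the mean-zero variants, and zero-extension for the trace-zero variants is the standard route and is essentially correct. One step is stated imprecisely: applying the $L^4$ Ladyzhenskaya inequality to $h=\abs{g}^{q/2}$ produces $\norm{g}_{L^{2q}}^{2q}\le C\,\norm{g}_{L^q}^{q}\,\norm{\abs{g}^{q/2-1}\nabla g}_{L^2}^{2}$, and the last factor is a weighted $L^2$ gradient that does not reduce to $\norm{\nabla g}_{L^2}$ by H\"older alone, so the chain as written does not close. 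The clean way to get $\norm{g}_{L^q(\R^2)}\le C\norm{g}_{L^2}^{2/q}\norm{\nabla g}_{L^2}^{1-2/q}$ is either (i) for $2<q\le 4$, interpolate $L^q$ between $L^2$ and $L^4$ and apply Ladyzhenskaya only to the $L^4$ factor, which gives exactly the stated exponents; or (ii) for $q\ge 4$, use the $W^{1,1}(\R^2)\hookrightarrow L^2(\R^2)$ Sobolev inequality (which is what the one-dimensional-primitive/Fubini trick actually proves in the form $\norm{h}_{L^2}\le C\norm{\nabla h}_{L^1}$) applied to $h=\abs{g}^{q/2}$, yielding $\norm{g}_{L^q}^{q/2}\le C\norm{g}_{L^{q-2}}^{(q-2)/2}\norm{\nabla g}_{L^2}$, and then interpolate $\norm{g}_{L^{q-2}}$ between $\norm{g}_{L^2}$ and $\norm{g}_{L^q}$ and absorb. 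With that correction (and the small absorption step you glossed over when replacing $\norm{g}_{L^q}$ by $\norm{g}_{L^2}$ in the Morrey estimate before scaling), your proof is complete and tracks the stated exponents correctly.
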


Furthermore, the following Beale-Kato-Majda type inequality, which can be found in \cite{1984 Beale-Kato-Majda-CMP}, plays an important role in obtaining the estimate of $\norm{\nabla \rho}_{L^q}$.
\begin{lema}\label{B-K-M}
For any $2<q<\infty$, there is a constant $C>0$ such that for all $\nabla\mathbf{u}\in W^{1,q}(\Omega)$, we have
\begin{align}
\label{BKM}
\norm{\nabla\mathbf{u}}_{L^\infty}\leq C(\norm{\div \mathbf{u}}_{L^\infty}+\norm{\curl \mathbf{u}}_{L^\infty})\log(e+\norm{\nabla^2\mathbf{u}}_{L^q})+C\norm{\nabla\mathbf{u}}_{L^2}+C.
\end{align}
\end{lema}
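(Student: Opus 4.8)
The plan is to deduce \eqref{BKM} from a singular-integral representation of $\nabla\mathbf{u}$ through $\div\mathbf{u}$ and $\curl\mathbf{u}$, followed by the classical three-scale splitting of the resulting Calder\'on--Zygmund integral and an optimization of the cut-off radius against $\norm{\nabla^2\mathbf{u}}_{L^q}$. Since all applications in this paper concern velocity fields vanishing on $\partial\Omega$, I would work under $\mathbf{u}|_{\partial\Omega}=0$. Writing $d=\div\mathbf{u}$, $w=\curl\mathbf{u}$ and using the two-dimensional identity $\Delta\mathbf{u}=\nabla d+\nabla^\perp w$ together with the Dirichlet Green's function $\Gamma(x,y)=\tfrac{1}{2\pi}\log\abs{x-y}+H(x,y)$ of $\Omega=B_R$, an integration by parts in which the boundary integrals drop out because $\Gamma(x,\cdot)$ vanishes on $\partial\Omega$ produces a representation of the form
\begin{align*}
\nabla\mathbf{u}(x)=c\,\mathbf{F}(x)+\PV\int_\Omega K(x,y)\,\mathbf{F}(y)\,dy,
\end{align*}
where $\mathbf{F}$ is a fixed linear combination of $d$ and $w$ and $K(x,y)=\nabla_x\nabla_y\Gamma(x,y)$. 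Because $\partial B_R$ is smooth, the kernel obeys $\abs{K(x,y)}\le C\abs{x-y}^{-2}$ on $\Omega\times\Omega$ together with the usual mean-zero cancellation on circles centered at $x$; thus $K$ is a Calder\'on--Zygmund kernel, while $\mathbf{F}$ satisfies $\norm{\mathbf{F}}_{L^\infty}\le C(\norm{\div\mathbf{u}}_{L^\infty}+\norm{\curl\mathbf{u}}_{L^\infty})$ and $\norm{\mathbf{F}}_{L^2}\le C\norm{\nabla\mathbf{u}}_{L^2}$, so the local term $c\,\mathbf{F}(x)$ is harmlessly absorbed into the $(\norm{\div\mathbf{u}}_{L^\infty}+\norm{\curl\mathbf{u}}_{L^\infty})$ contribution.

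Next I would fix $x\in\Omega$ and a parameter $\delta\in(0,1)$ and split the integral over the three regions $B_\delta(x)\cap\Omega$, $\set{\delta\le\abs{x-y}\le1}\cap\Omega$, and $\set{\abs{x-y}\ge1}\cap\Omega$. On the outer region the kernel is bounded, so Cauchy--Schwarz gives a contribution $\le C\norm{\mathbf{F}}_{L^2}\le C\norm{\nabla\mathbf{u}}_{L^2}$. On the middle annulus $\abs{K}\le C\abs{x-y}^{-2}$ and $\mathbf{F}\in L^\infty$, so integrating in polar coordinates gives $\le C(\norm{\div\mathbf{u}}_{L^\infty}+\norm{\curl\mathbf{u}}_{L^\infty})\log(1/\delta)$, the source of the logarithm. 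On the inner ball one uses the cancellation to subtract $\mathbf{F}(x)$, estimates $\abs{\mathbf{F}(y)-\mathbf{F}(x)}\le\abs{x-y}\int_0^1\abs{\nabla\mathbf{F}(x+t(y-x))}\,dt$, and applies H\"older with exponents $q$ and $q'=q/(q-1)$; here the hypothesis $q>2$ is exactly what forces $q'<2$, hence $\abs{z}^{-q'}$ locally integrable, so $\big(\int_{B_\delta}\abs{z}^{-q'}dz\big)^{1/q'}=C\delta^{1-2/q}$ and the inner contribution is $\le C\delta^{1-2/q}\norm{\nabla\mathbf{F}}_{L^q}\le C\delta^{1-2/q}\norm{\nabla^2\mathbf{u}}_{L^q}$.

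Collecting the three pieces, $\norm{\nabla\mathbf{u}}_{L^\infty}\le C(\norm{\div\mathbf{u}}_{L^\infty}+\norm{\curl\mathbf{u}}_{L^\infty})\log(1/\delta)+C\delta^{1-2/q}\norm{\nabla^2\mathbf{u}}_{L^q}+C\norm{\nabla\mathbf{u}}_{L^2}+C$, and I would conclude by taking $\delta=(e+\norm{\nabla^2\mathbf{u}}_{L^q})^{-q/(q-2)}\in(0,1)$, so that $\delta^{1-2/q}\norm{\nabla^2\mathbf{u}}_{L^q}\le1$ and $\log(1/\delta)=\tfrac{q}{q-2}\log(e+\norm{\nabla^2\mathbf{u}}_{L^q})$, which yields precisely \eqref{BKM}. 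The essential point — and the reason one cannot replace the logarithm by a constant — is that the operator $(\div\mathbf{u},\curl\mathbf{u})\mapsto\nabla\mathbf{u}$ is a matrix of Calder\'on--Zygmund operators, bounded on every $L^q$ with $1<q<\infty$ and mapping $L^\infty$ into $BMO$ but not into $L^\infty$; the inner ball is where this borderline failure is concentrated, and trading a small power $\delta^{1-2/q}$ of $\norm{\nabla^2\mathbf{u}}_{L^q}$ against it is what forces $q>2$ and, after optimizing $\delta$, generates the logarithmic loss. The only genuinely technical (as opposed to conceptual) point is the passage from the whole-space/torus setting of \cite{1984 Beale-Kato-Majda-CMP} to the disk, which is handled either by a bounded extension combined with a cut-off or, as above, by the Dirichlet Green's function of $B_R$, whose mixed second derivatives inherit both the $\abs{x-y}^{-2}$ bound and the cancellation from the smoothness of $\partial B_R$ and whose vanishing on $\partial B_R$ removes the boundary terms in the integration by parts (so the Dirichlet condition $\mathbf{u}|_{\partial\Omega}=0$, in force throughout the paper, is what keeps the representation clean).
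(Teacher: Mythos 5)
The paper gives no proof of this lemma --- it imports it by citing Beale--Kato--Majda and uses it as a black box --- so there is no internal argument to compare against. Your reconstruction is the standard Green's-function plus three-scale-splitting plus optimize-in-$\delta$ argument, and the outline, including the exponent $\delta^{1-2/q}$ from H\"older with $q>2$ and the choice $\delta=(e+\norm{\nabla^2\mathbf{u}}_{L^q})^{-q/(q-2)}$, is correct.

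Two points deserve a more careful statement. First, the assertion that $K(x,y)=\nabla_x\nabla_y\Gamma(x,y)$ ``obeys the usual mean-zero cancellation on circles centered at $x$'' is not accurate as written: the Newtonian piece $\tfrac{1}{2\pi}\nabla_x\nabla_y\log\abs{x-y}$ does integrate to zero on circles, but the harmonic corrector $\nabla_x\nabla_y H(x,y)$ does not, and $H$ is not uniformly smooth up to $\partial\Omega\times\partial\Omega$ (near the boundary diagonal its mixed Hessian is only $O(\abs{x-y}^{-2})$, it is not a bounded perturbation). What your inner-ball step actually needs is the uniform bound $\bigl|\int_{B_\delta(x)\cap\Omega}K(x,y)\,dy\bigr|\le C$, and this is best obtained directly rather than by appealing to circle cancellation: integrate once by parts in $y$, observe that $\partial_{x_i}\Gamma(x,\cdot)\equiv 0$ on $\partial\Omega$ (because $\Gamma(\cdot,y)$ vanishes for $y\in\partial\Omega$ by symmetry of the Green's function), so the boundary contribution on $\partial\Omega\cap B_\delta(x)$ drops out, and on $\partial B_\delta(x)\cap\Omega$ use $\abs{\nabla_x\Gamma}\le C\delta^{-1}$ against a one-dimensional measure $O(\delta)$. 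With that substitution the inner-ball estimate closes as you intend. Second, you explicitly restrict to $\mathbf{u}|_{\partial\Omega}=0$, which is a genuine strengthening of the stated hypothesis ``$\nabla\mathbf{u}\in W^{1,q}(\Omega)$''; since the lemma is applied in this paper only to velocity fields satisfying the Dirichlet condition this costs nothing here, but you should flag it as a restriction rather than present the argument as a proof of the lemma as stated.
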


The following properties of the Bogovskii operator can be found in \cite{2023 Fan-Wang-Li-arXiv}.
\begin{lema}\label{Bogovskii}
Define
\[
L_0^p(\Omega) = \left\{ f | \norm{f}_{L^p} < \infty, \int_{\Omega} f \, dx = 0 \right\}.
\]
Then, for \(1 < p < \infty\), there is a bounded linear operator \( \mathcal{B} \) given by
\[
\mathcal{B} : L_0^p \rightarrow W_0^{1,p}, \quad f \mapsto \mathcal{B}(f),
\]
such that \( u = \mathcal{B}(f) \) is a solution to the equation below,
\begin{equation}\label{Bo Equ}
\begin{cases}
\div \, u = f & \text{in } \Omega, \\
u = 0 & \text{on } \partial \Omega.
\end{cases}
\end{equation}

Moreover, we have following properties:\\
(i)
 For $1<p<\infty$, there is a constant $C(p)$ depending on $\Omega$ and $p$, such that
\begin{align*}
\norm{\mathcal{B}(f)}_{W^{1,p}}\leq C(p)\norm{f}_{L^p}.
\end{align*}
(ii)
In particular, when $f=\div g$ with $g\cdot n=0$ on $\partial\Omega$, $\mathcal{B}(f)$ is well defined and satisfies that 
\begin{align*}
\norm{\mathcal{B}(f)}_{L^p}\leq C(p)\norm{g}_{L^p},
\end{align*}
and $u=\mathcal{B}(f)$is a weak solution to \eqref{Bo Equ}.

\end{lema}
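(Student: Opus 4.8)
\textbf{Proof proposal for Lemma \ref{Bogovskii}.}

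The plan is to follow the classical construction of the Bogovskii operator, which is standard once $\Omega$ is a bounded Lipschitz (here, smooth convex) domain; since $\Omega = B_R$ this is even simpler, but I will keep the argument general. First I would reduce to the case where $\Omega$ is star-shaped with respect to a ball $B \subset\subset \Omega$, which for the disk $B_R$ is immediate with $B$ any concentric smaller ball. Fix a cutoff $\omega \in C_c^\infty(B)$ with $\int_\Omega \omega\,dx = 1$. For $f \in L_0^p(\Omega)$ define the explicit integral operator
\begin{align*}
\mathcal{B}(f)(x) = \int_\Omega f(y)\,\frac{x-y}{|x-y|^2}\int_{|x-y|}^\infty \omega\!\left(y + s\,\frac{x-y}{|x-y|}\right) s\,ds\,dy,
\end{align*}
which is the Bogovskii formula. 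A direct differentiation under the integral sign, using $\int_\Omega f\,dx = 0$ to discard the boundary contribution, shows $\div \mathcal{B}(f) = f$ in $\Omega$, and the support properties of $\omega$ together with star-shapedness give $\mathcal{B}(f) = 0$ on $\partial\Omega$; this establishes that $u = \mathcal{B}(f)$ solves \eqref{Bo Equ}.

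For property (i), the kernel of $\nabla \mathcal{B}$ is a Calder\'on--Zygmund kernel of convolution-plus-lower-order type (a singular integral operator of the form $\text{p.v.}\,K * f$ plus a bounded remainder, with $K$ homogeneous of degree $-n$ and mean zero on spheres), so the Calder\'on--Zygmund theorem gives $\|\nabla \mathcal{B}(f)\|_{L^p} \le C(p,\Omega)\|f\|_{L^p}$ for $1 < p < \infty$; combined with the Poincar\'e inequality (using $\mathcal{B}(f) \in W_0^{1,p}$) this yields $\|\mathcal{B}(f)\|_{W^{1,p}} \le C(p)\|f\|_{L^p}$. For property (ii), when $f = \div g$ with $g\cdot n = 0$ on $\partial\Omega$, I would integrate by parts in the defining formula to move the divergence off $g$; one checks that $\mathcal{B}(\div g)$ can be written as $g$ convolved against a kernel homogeneous of degree $1-n$ (after the integration by parts the singularity is weakened by one order), hence again a Calder\'on--Zygmund-type operator acting on $g$ itself, giving $\|\mathcal{B}(\div g)\|_{L^p} \le C(p)\|g\|_{L^p}$; the boundary condition $g\cdot n = 0$ is exactly what is needed so that no boundary term survives the integration by parts, and $u = \mathcal{B}(\div g)$ remains a weak solution of \eqref{Bo Equ} by density of smooth $g$.

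The main obstacle is the $L^p$ continuity of the singular-integral parts — i.e., verifying that the kernels arising from $\nabla\mathcal{B}$ and from $\mathcal{B}\circ\div$ genuinely satisfy the Calder\'on--Zygmund cancellation and size conditions (mean-zero on spheres, correct homogeneity, Hörmander condition), which is the technical heart of Bogovskii's original argument. Since this is entirely classical and the statement is quoted from \cite{2023 Fan-Wang-Li-arXiv} (ultimately from Bogovskii and from Galdi's monograph), in the write-up I would not reproduce the Calder\'on--Zygmund verification in full but cite the standard references, giving only the construction and the reduction to the known singular-integral bounds.
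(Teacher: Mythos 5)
The paper does not prove this lemma at all; it states it as a known result and cites \cite{2023 Fan-Wang-Li-arXiv} (which in turn rests on Bogovskii's original work and Galdi's monograph). Your sketch correctly reproduces the standard Bogovskii construction specialized to dimension two (the kernel $\frac{x-y}{|x-y|^2}\int_{|x-y|}^\infty \omega(\cdot)\,s\,ds$ is exactly the $n=2$ case of the general formula with $|x-y|^n$ and $s^{n-1}$), and your concluding remark — that the $L^p$ bounds rest on classical Calder\'on--Zygmund estimates one would cite rather than re-derive — is precisely the stance the paper itself takes, so there is nothing to reconcile.
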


Finally, we need the following Zlotnik inequality to obtain the uniform estimate of the density.

\begin{lema}\cite{2000 Zlotnik-JDE}\label{Zlotnik}
Let the function $y\in W^{1,1}(0,T)$ satisfy
\begin{align*}
y'(t)=g(y)+h'(t) \text{ on } [0,T], \quad y(0)=y^0
\end{align*}
with $g\in C(\mathbb{R})$ and $h\in W^{1,1}(0,T)$. If $g(\infty)=-\infty$ and 
\begin{align*}
h(t_2)-h(t_1)\leq N_0+N_1(t_2-t_1)
\end{align*}
for all $0\leq t_1<t_2\leq T$ with some $N_0\ge 0$ and $N_1\ge 0$, then
\begin{align*}
y(t)\leq \max\left\{y^0, \tilde{\zeta}\right\}+N_0<\infty \text{ on }[0,T],
\end{align*}
where $\tilde{\zeta}$ is a constant such that 
\begin{align*}
g(\zeta)\leq -N_1, \text{ for }\zeta\ge \tilde{\zeta}.
\end{align*}
\end{lema}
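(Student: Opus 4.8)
The plan is to argue by contradiction, using only the comparison structure of the equation together with the sub-linear control of $h$. First I would record the regularity point that, since $y\in W^{1,1}(0,T)$, it is absolutely continuous on $[0,T]$; in particular $y$ is continuous and the fundamental theorem of calculus may be applied to $y'=g(y)+h'$. Set $M:=\max\{y^0,\tilde\zeta\}$, so that $M\ge\tilde\zeta$ and hence $g(\zeta)\le -N_1$ for every $\zeta\ge M$. The goal is then to prove $y(t)\le M+N_0$ on all of $[0,T]$, which immediately gives the asserted bound.

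Assume the bound fails, so that $y(\tau)>M+N_0\ge M$ for some $\tau\in(0,T]$. I would introduce the last time before $\tau$ at which $y$ sits at or below the level $M$, namely $t_1:=\sup\{t\in[0,\tau):y(t)\le M\}$; this set is nonempty because $y(0)=y^0\le M$, so $t_1$ is well defined. Using the continuity of $y$ and the fact that $y(\tau)>M$, one checks that $t_1<\tau$, that $y(s)>M$ for every $s\in(t_1,\tau]$, and that $y(t_1)\le M$ (with equality $y(t_1)=M$ when $t_1>0$, while $y(t_1)=y^0\le M$ when $t_1=0$).

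On the interval $(t_1,\tau]$ we have $y(s)>M\ge\tilde\zeta$, hence $g(y(s))\le -N_1$ for a.e.\ $s$ there. Integrating $y'=g(y)+h'$ over $[t_1,\tau]$ and inserting the hypothesis $h(\tau)-h(t_1)\le N_0+N_1(\tau-t_1)$ yields
\[
y(\tau)-y(t_1)=\int_{t_1}^{\tau}g(y(s))\,ds+\bigl(h(\tau)-h(t_1)\bigr)\le -N_1(\tau-t_1)+N_0+N_1(\tau-t_1)=N_0,
\]
so that $y(\tau)\le y(t_1)+N_0\le M+N_0$, contradicting $y(\tau)>M+N_0$. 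Therefore no such $\tau$ exists and $y(t)\le M+N_0<\infty$ on $[0,T]$; the existence of the constant $\tilde\zeta$ is guaranteed by the hypothesis $g(\infty)=-\infty$.

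The argument is elementary and I do not anticipate a genuine obstacle. The only points demanding care are the measure-theoretic remark that $W^{1,1}$ functions are absolutely continuous (which legitimizes integrating $y'$ and guarantees continuity of $y$), and the topological bookkeeping that isolates the last instant $t_1$ before $\tau$ at which $y$ returns to the level $M$ — in particular the boundary case $t_1=0$, where one must invoke $y^0\le M$ in place of $y(t_1)=M$. No compactness or finer information about $g$ beyond $g(\infty)=-\infty$ is used.
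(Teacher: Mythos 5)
Your argument is correct and self-contained. The paper does not prove this lemma --- it is simply cited from Zlotnik's 2000 paper --- so there is no internal proof to compare against; the appropriate check is whether your reasoning is sound, and it is. The ``last-crossing-time'' contradiction argument you use (introduce $t_1=\sup\{t<\tau: y(t)\le M\}$, note $y>M\ge\tilde\zeta$ on $(t_1,\tau]$ so $g(y)\le -N_1$ there, integrate and absorb the $N_1(\tau-t_1)$ terms) is exactly the standard device for comparison lemmas of this type, and it is essentially how Zlotnik's original argument runs. You handle the regularity points carefully: both $y$ and $h$ in $W^{1,1}$ are absolutely continuous, which legitimizes writing $y(\tau)-y(t_1)=\int_{t_1}^\tau g(y)\,ds+h(\tau)-h(t_1)$, and you correctly treat the boundary case $t_1=0$ (where the closing estimate uses $y(t_1)=y^0\le M$ rather than $y(t_1)=M$). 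One cosmetic remark: since $y$ and $g$ are both continuous, $g(y(s))\le -N_1$ holds for \emph{every} $s\in(t_1,\tau]$, not merely almost every $s$; the ``a.e.'' qualifier is harmless but unnecessary. No gap.
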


\section{A priori estimates}
Let $T>0$ be a fixed time and $(\rho,\mathbf{u})$ be a radially symmetric strong solution of the problem \eqref{Equ 2}-\eqref{boundary condition2} on $\Omega\times(0,T) $. We begin by performing standard energy equality.

\begin{lema}
Let
\begin{align*}
E(t)=\frac{1}{2}\int_{\Omega}\rho(t)|\mathbf{u}(t)|^2dx+\frac{1}{\gamma-1}\int_\Omega(\rho(t))^\gamma dx,
\end{align*}
then for any $t\in[0,T)$,
   \begin{align}\label{energy estimate}
       \begin{split}
            E(t)+\int_0^t\mu\norm{\nabla \mathbf{u}}_{L^2}^2+\norm{\sqrt{\mu+\lambda}\div\mathbf{u}}_{L^2}^2 d\tau=E(0).
       \end{split}
    \end{align}  
\end{lema}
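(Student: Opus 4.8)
\textbf{Proof strategy for the energy identity \eqref{energy estimate}.}

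The plan is to multiply the momentum equations by the appropriate components of the velocity, integrate over $\Omega$, use the continuity equation to rewrite the kinetic energy term, and exploit the Dirichlet boundary condition \eqref{boundary condition2} to kill all boundary contributions. Concretely, I would work directly with the Cartesian form \eqref{Equ1}: take the $L^2(\Omega)$ inner product of the momentum equation $(\rho\mathbf{u})_t+\div(\rho\mathbf{u}\otimes\mathbf{u})+\nabla P=\mu\Delta\mathbf{u}+\nabla((\mu+\lambda)\div\mathbf{u})$ with $\mathbf{u}$. For the time-derivative and convective terms, the standard manipulation using \eqref{Equ1}$_1$ gives
\begin{align*}
\int_\Omega\big((\rho\mathbf{u})_t+\div(\rho\mathbf{u}\otimes\mathbf{u})\big)\cdot\mathbf{u}\,dx=\frac{d}{dt}\int_\Omega\frac12\rho|\mathbf{u}|^2\,dx,
\end{align*}
which is the identity $\partial_t(\tfrac12\rho|\mathbf{u}|^2)+\div(\tfrac12\rho|\mathbf{u}|^2\mathbf{u})=(\rho\dot{\mathbf{u}})\cdot\mathbf{u}$ integrated in space, the divergence term vanishing because $\mathbf{u}=0$ on $\partial\Omega$.

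Next I would handle the pressure and viscous terms by integration by parts. Since $\mathbf{u}|_{\partial\Omega}=0$, $\int_\Omega\nabla P\cdot\mathbf{u}\,dx=-\int_\Omega P\,\div\mathbf{u}\,dx$, and one uses the continuity equation together with $P=\rho^\gamma$ to recognize $-\int_\Omega P\,\div\mathbf{u}\,dx=\frac{d}{dt}\frac{1}{\gamma-1}\int_\Omega\rho^\gamma\,dx$ (from $\partial_t(\tfrac{1}{\gamma-1}\rho^\gamma)+\div(\tfrac{1}{\gamma-1}\rho^\gamma\mathbf{u})+P\div\mathbf{u}=0$, again integrating the divergence away). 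For the viscous terms, integrating by parts gives $\int_\Omega\big(\mu\Delta\mathbf{u}+\nabla((\mu+\lambda)\div\mathbf{u})\big)\cdot\mathbf{u}\,dx=-\mu\int_\Omega|\nabla\mathbf{u}|^2\,dx-\int_\Omega(\mu+\lambda)|\div\mathbf{u}|^2\,dx=-\mu\norm{\nabla\mathbf{u}}_{L^2}^2-\norm{\sqrt{\mu+\lambda}\,\div\mathbf{u}}_{L^2}^2$. Collecting terms yields $\frac{d}{dt}E(t)+\mu\norm{\nabla\mathbf{u}}_{L^2}^2+\norm{\sqrt{\mu+\lambda}\,\div\mathbf{u}}_{L^2}^2=0$, and integrating in time from $0$ to $t$ gives \eqref{energy estimate}.

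Alternatively, since the paper works in radial variables, one can carry out the same computation on the system \eqref{Equ 2}: multiply \eqref{Equ 2}$_2$ by $u_r$ and \eqref{Equ 2}$_3$ by $u_\theta$, multiply by the weight $r\,dr$, add, and integrate over $(0,R)$. The boundary terms at $r=R$ vanish by \eqref{boundary condition2}, and those at $r=0$ vanish because of the $r$-weight together with the vanishing of $u_r,u_\theta$ at the origin; the viscous terms reproduce $\int_0^R\big(\mu(|\partial_r u_r|^2+|\partial_r u_\theta|^2+r^{-2}(u_r^2+u_\theta^2))+\lambda|\partial_r u_r+r^{-1}u_r|^2\big)r\,dr$, which matches $\mu\norm{\nabla\mathbf{u}}_{L^2}^2+\norm{\sqrt{\mu+\lambda}\div\mathbf{u}}_{L^2}^2$ after accounting for the $2\pi$ from the angular integration. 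Either route is entirely routine; the only point requiring a little care is the justification of the integration by parts near $r=0$ (equivalently, that the Cartesian manipulation is legitimate for the regularity class of strong solutions), which follows from the smoothing of strong solutions for positive time and the compatibility condition, so there is no serious obstacle here — this lemma is a standard bookkeeping step recorded for later use.
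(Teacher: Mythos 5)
The paper records this lemma without proof, calling it a ``standard energy equality,'' and your Cartesian derivation is exactly the calculation one is expected to fill in: dot the momentum equation with $\mathbf{u}$, use the continuity equation to condense the kinetic and pressure contributions into $\frac{d}{dt}E(t)$, integrate the viscous terms by parts using $\mathbf{u}|_{\partial\Omega}=0$, and integrate in time. This is correct and matches the intended argument. One small inaccuracy in your alternative radial computation: the integrand you wrote, $\mu(|\partial_r u_r|^2+|\partial_r u_\theta|^2+r^{-2}(u_r^2+u_\theta^2))+\lambda|\partial_r u_r+r^{-1}u_r|^2$, should have $(\mu+\lambda)$, not $\lambda$, in front of the $|\div\mathbf{u}|^2$ factor if you want it to equal $\mu|\nabla\mathbf{u}|^2+(\mu+\lambda)|\div\mathbf{u}|^2$ pointwise; moreover what you actually obtain when you test \eqref{Equ 2}$_2$ directly is $(2\mu+\lambda)(\partial_r u_r+\tfrac{u_r}{r})^2+\mu((\partial_r u_\theta)^2+\tfrac{u_\theta^2}{r^2})$, which agrees with $\mu|\nabla\mathbf{u}|^2+(\mu+\lambda)|\div\mathbf{u}|^2$ only after integration against $r\,dr$ and one more integration by parts to absorb the cross term $2\mu\int_0^R u_r\partial_r u_r\,dr=\mu(u_r^2(R)-u_r^2(0))=0$; this is worth stating explicitly since the equality is not pointwise. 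These are cosmetic issues in the optional alternative; your main proof is sound.
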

Next, we present the renormalized transport equation structure first introduced by Kazhikhov and Vaigant \cite{1995 Vaigant-Kazhikhov-SMJ}. Define 
$\theta(\rho)=2\mu\log\rho +\frac{1}{\beta}\rho^\beta$
and the effective viscous flux as
\begin{align}\label{effective viscous flux}
G=(2\mu+\lambda)\div \mathbf{u}-(P-\bar{P}).
\end{align}
Multiplying the transport equation $\eqref{Equ1}_1$ with $\theta'(\rho)$, we have
\begin{align}\label{RTE}
\theta_t+u_r\partial_r\theta+G+P-\bar{P}=0.
\end{align}
Due to the definition of the effective viscous flux \eqref{effective viscous flux},  we can rewrite the momentum equation $\eqref{Equ 2}_2$ as
\begin{align}\label{RME}
(\rho u_r)_t+\partial_r(\rho u_r^2)+\frac{\rho}{r}(u^2_r-u_\theta^2)=\partial_rG.
\end{align}
Integrating the above equation from $R$ to $r$ yields
\begin{align}\label{RME 2}
\xi_t+\rho u_r^2+\int_R^r\frac{\rho}{s}(u_r^2-u_\theta^2)ds=G-G(R,t),
\end{align}
where $\xi=\int_R^r \rho u_rds$. Adding equation \eqref{RTE} and equation \eqref{RME 2} gives
\begin{align}\label{Flow  structure}
(\theta+\xi)_t+u_r\partial_r(\theta+\xi)+\int_R^r\frac{\rho}{s}(u_r^2-u_\theta^2)ds+P-\bar{P}+G(R,t)=0.
\end{align}

Following the idea in \cite{2023 Huang-Su-Yan-Yu}, we provide the boundary representation of the effective viscous flux.
\begin{lema}\label{F boundary}
It holds that
\begin{align}\label{FBT}
G(R,t)=\frac{1}{R^2}\left\{\frac{d}{dt}\int_0^R\rho u_rr^2dr+\int_0^R2G(r,t)rdr-\int_0^R\rho (u_r^2+u_\theta^2)rdr\right\}.
\end{align}
\end{lema}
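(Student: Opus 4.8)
The plan is to obtain \eqref{FBT} by testing the rewritten radial momentum equation \eqref{RME} against the weight $r^2$ and integrating in the radial variable, exactly in the spirit of \cite{2023 Huang-Su-Yan-Yu}. Concretely, I would multiply
\[
(\rho u_r)_t+\partial_r(\rho u_r^2)+\frac{\rho}{r}(u_r^2-u_\theta^2)=\partial_r G
\]
by $r^2$ and integrate over $r\in(0,R)$. The time term gives $\frac{d}{dt}\int_0^R\rho u_r r^2\,dr$, differentiation under the integral sign being legitimate for a strong solution. For the flux term on the right I integrate by parts: $\int_0^R \partial_r G\, r^2\,dr = R^2 G(R,t)-2\int_0^R G r\,dr$, where the contribution at $r=0$ vanishes thanks to the $r^2$ weight together with the boundedness of $G=(2\mu+\lambda)\div\mathbf u-(P-\bar P)$ near the origin.

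For the convective term I integrate by parts in the same way: $\int_0^R \partial_r(\rho u_r^2) r^2\,dr = R^2\rho(R,t)u_r(R,t)^2 - 2\int_0^R \rho u_r^2 r\,dr$, where the boundary term at $r=R$ is killed by the no-slip condition $u_r(R,t)=0$ from \eqref{boundary condition2} and the term at $r=0$ again vanishes by the $r^2$ weight. Combining $-2\int_0^R\rho u_r^2 r\,dr$ with the centrifugal term $\int_0^R\frac{\rho}{r}(u_r^2-u_\theta^2)r^2\,dr=\int_0^R\rho(u_r^2-u_\theta^2)r\,dr$ produces precisely $-\int_0^R\rho(u_r^2+u_\theta^2)r\,dr$. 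Equating the two sides and solving the resulting identity for $G(R,t)$ yields \eqref{FBT}.

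There is no real analytic difficulty here; the only points deserving a word of care are the integrability near $r=0$ of the quantities being integrated by parts and the vanishing of every boundary contribution. The $r^2$ weight plus the boundedness of $G$, $\rho u_r^2$, and $\rho u_\theta^2$ on $[0,R]$ for a strong solution dispose of the origin, while the Dirichlet condition $u_r(R,t)=0$ handles $r=R$; in particular the a priori singular term $\frac{\rho}{r}(u_r^2-u_\theta^2)$ becomes $\rho r(u_r^2-u_\theta^2)$ after multiplication by $r^2$ and is harmless. Thus the only substantive ingredient is the no-slip condition at $r=R$, which is what makes the single unknown boundary value $G(R,t)$ expressible through interior integrals.
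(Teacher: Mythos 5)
Your proposal is correct and follows essentially the same approach as the paper: multiply \eqref{RME} (equivalently $\eqref{Equ 2}_2$ rewritten using $G$) by $r^2$, integrate over $(0,R)$, integrate by parts on both $\partial_r G$ and the convective flux, and invoke $u_r(R,t)=0$ to kill the boundary contribution. The paper organizes the algebra by first rewriting the equation as $\partial_t(r^2\rho u_r)+\partial_r(r^2\rho u_r^2)-r\rho(u_r^2+u_\theta^2)=r^2\partial_r G$ before integrating, but this is the same calculation presented in a slightly different order.
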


\begin{proof}
Multiplying the momentum equation $\eqref{Equ 2}_2$ by $r^2$ gives 
\begin{align}
\partial_t(r^2\rho u_r)+\partial_r(r^2\rho u_r^2)-r(\rho u_r^2+\rho u_\theta^2)=r^2\partial_rG.
\end{align}
Integrating the above equation from $0$ to $R$ and integrating by parts yield
\begin{align*}
\frac{d}{dt}\int_0^R \rho u_rr^2dr-\int_0^R\rho  (u_r^2+u_\theta^2)rdr=R^2G(R,t)-\int_0^R 2G(r,t)rdr.
\end{align*}
Then
\begin{align*}
G(R,t)=\frac{1}{R^2}\left\{\frac{d}{dt}\int_0^R\rho u_rr^2dr+\int_0^R2G(r,t)rdr-\int_0^R\rho (u_r^2+u_\theta^2)rdr\right\}.
\end{align*}
\end{proof}

\subsection{Uniform upper bound of density for $\beta>\max\left\{1,\frac{\gamma+2}{4}\right\}$}
In this subsection, our aim is to establish a time-independent uniform upper bound of the density under assumption $\beta>\max\left\{1,\frac{\gamma+2}{4}\right\}$. Our proof follows the ideas in \cite{2016 Huang-Li-JMPA} and \cite{2023 Huang-Su-Yan-Yu}. We denote
\begin{align*}
	A_1^2(t)&=\int_\Omega\left(\frac{G^2}{2\mu+\lambda}+\mu\omega^2\right)dx,\\
	A_2^2(t)&=\int_\Omega\rho|\dot{\mathbf{u}}|^2dx,\\
	A_3^2(t)&=\int_\Omega\left((2\mu+\lambda)(\div \mathbf{u})^2+\mu\omega^2\right)dx,
\end{align*}
and 
\begin{align*}
R_T:=\sup_{0\le t\le T}\norm{\rho(t)}_{L^\infty}+1.
\end{align*}
We need the following lemma to obtain $L^p$ estimates for $\nabla G$ and $\nabla w$.
\begin{lema}
For any $1\le p<\infty$, there holds
\begin{align}\label{G,w}
\begin{split}
&\norm{\nabla G}_{L^p}\leq \norm{\rho\dot{\mathbf{u}}}_{L^p},\\
&\mu\norm{\nabla^\perp w}_{L^p}\leq \norm{\rho\dot{\mathbf{u}}}_{L^p}.
\end{split}
\end{align}
\end{lema}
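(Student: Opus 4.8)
The plan is to exploit the radially symmetric structure to reduce the vector identity to a scalar one. The key observation is that under radial symmetry, $G = G(r,t)$ depends only on $r$, hence $\nabla G = \partial_r G \, \mathbf{e_r}$; similarly $w = w(r,t)$, so the rotated gradient satisfies $\nabla^\perp w = \partial_r w \, \mathbf{e_\theta}$. On the other hand, the momentum equation $\eqref{Equ1}_2$ can be rewritten, using the definition of the effective viscous flux $G$ in $\eqref{effective viscous flux}$ and the standard identity $\Delta \mathbf{u} = \nabla \div \mathbf{u} - \nabla^\perp w$, as $\rho \dot{\mathbf{u}} = \nabla G - \mu \nabla^\perp w$ (where $\dot{\mathbf{u}} = \mathbf{u}_t + \mathbf{u}\cdot\nabla\mathbf{u}$). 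This is the classical Hoff decomposition of the momentum balance.

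First I would establish the decomposition $\rho\dot{\mathbf{u}} = \nabla G - \mu\nabla^\perp w$ by substituting $\eqref{effective viscous flux}$ into $\eqref{Equ1}_2$, recalling that $\rho\dot{\mathbf{u}} = (\rho\mathbf{u})_t + \div(\rho\mathbf{u}\otimes\mathbf{u})$ by the continuity equation. Then I would project this identity onto the orthonormal frame $\{\mathbf{e_r}, \mathbf{e_\theta}\}$. Since $\nabla G$ is purely radial, $\nabla G \cdot \mathbf{e_\theta} = 0$, and since $\nabla^\perp w$ is purely angular, $\nabla^\perp w \cdot \mathbf{e_r} = 0$. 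Therefore
\begin{align*}
\partial_r G = \nabla G \cdot \mathbf{e_r} = \rho\dot{\mathbf{u}}\cdot\mathbf{e_r}, \qquad -\mu\partial_r w = -\mu\nabla^\perp w \cdot \mathbf{e_\theta} = \rho\dot{\mathbf{u}}\cdot\mathbf{e_\theta}.
\end{align*}
Since $\{\mathbf{e_r},\mathbf{e_\theta}\}$ is orthonormal, $|\nabla G| = |\partial_r G| = |\rho\dot{\mathbf{u}}\cdot\mathbf{e_r}| \le |\rho\dot{\mathbf{u}}|$ pointwise, and likewise $\mu|\nabla^\perp w| = \mu|\partial_r w| = |\rho\dot{\mathbf{u}}\cdot\mathbf{e_\theta}| \le |\rho\dot{\mathbf{u}}|$ pointwise. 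Raising to the $p$-th power and integrating over $\Omega$ yields $\eqref{G,w}$ for every $1 \le p < \infty$.

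The step requiring the most care is justifying that $G$ and $w$ are genuinely functions of $r$ alone so that the gradients have the claimed one-component form, and checking the sign/geometry in the identity $\Delta\mathbf{u} = \nabla\div\mathbf{u} - \nabla^\perp w$ with the paper's convention $w = \partial_2 u_1 - \partial_1 u_2$ (i.e. $w = \nabla^\perp\cdot\mathbf{u}$). This is really the only place an error could creep in; the rest is a pointwise comparison exploiting orthonormality of the moving frame, so no genuine obstacle arises. One should note that although the identities are derived assuming sufficient regularity, the final bound $\eqref{G,w}$ involves only $\nabla G$, $\nabla^\perp w \in L^p$ and $\rho\dot{\mathbf{u}} \in L^p$, which holds for the strong solutions under consideration by the regularity in $\eqref{regularity}$.
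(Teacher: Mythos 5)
Your approach is essentially the paper's: exploit that radial symmetry forces $\nabla G = \partial_r G\,\mathbf{e_r}$ and $\nabla^\perp w = \partial_r w\,\mathbf{e_\theta}$, then project the Hoff decomposition onto the orthonormal frame $\{\mathbf{e_r},\mathbf{e_\theta}\}$ to get pointwise bounds, and integrate. There is, however, a sign error precisely at the step you flag as needing care: with the paper's conventions $w = \partial_2 u_1 - \partial_1 u_2$ and $\nabla^\perp = (\partial_2,-\partial_1)$, one has
\begin{align*}
\bigl(\Delta\mathbf{u} - \nabla\div\mathbf{u}\bigr)_1 = \partial_{22}u_1 - \partial_{12}u_2 = \partial_2 w, \qquad
\bigl(\Delta\mathbf{u} - \nabla\div\mathbf{u}\bigr)_2 = \partial_{11}u_2 - \partial_{21}u_1 = -\partial_1 w,
\end{align*}
so $\Delta\mathbf{u} = \nabla\div\mathbf{u} + \nabla^\perp w$ and consequently $\rho\dot{\mathbf{u}} = \nabla G + \mu\nabla^\perp w$ (as in the paper's \eqref{eff estimate}), not $\nabla G - \mu\nabla^\perp w$ as you wrote. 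Fortunately this sign is immaterial for the conclusion, since you only compare absolute values of the frame components, so the bounds $|\partial_r G| \le |\rho\dot{\mathbf{u}}|$ and $\mu|\partial_r w| \le |\rho\dot{\mathbf{u}}|$ still hold pointwise and \eqref{G,w} follows.
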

\begin{proof}
Note that $G$, $w$ satisfy
\begin{align}\label{eff estimate}
\rho \dot{\mathbf{u}}=\nabla G+\mu\nabla^\perp\omega,
\end{align}
and the following simple fact
\begin{align*}
&\nabla G=\partial_r G \mathbf{e_r},\\
&\nabla^\perp w=\partial_r w\mathbf{e_\theta}.
\end{align*}
We take the inner product of equation \eqref{eff estimate} with $\mathbf{e_r}$ and $\mathbf{e_\theta}$ respectively to get
\begin{align*}
&\norm{\nabla G}_{L^p}=\norm{\nabla G\cdot \frac{\mathbf{x}}{r}}_{L^p}=\norm{\rho\dot{\mathbf{u}}\cdot\frac{\mathbf{x}}{r}}_{L^p}\leq \norm{\rho\dot{\mathbf{u}}}_{L^p},\\
&\mu\norm{\nabla^\perp \omega}_{L^p}=\mu\norm{\nabla^\perp \omega\cdot\frac{\mathbf{x}^\perp}{r}}_{L^p}=\norm{\rho\dot{\mathbf{u}}\cdot\frac{\mathbf{x^\perp}}{r}}_{L^p}\leq \norm{\rho\dot{\mathbf{u}}}_{L^p}.
\end{align*}
This completes the proof.
\end{proof}
\begin{prop}\label{suplog}
For any $\alpha\in(0,1)$, there exists a constant $C>0$ depending only on $\alpha,\mu,\beta,\gamma,R,\norm{\rho_0}_{L^\infty},\norm{\mathbf{u_0}}_{H^1}$ such that 
\begin{align}\label{the first structure}
\sup_{0\le t\le T}\log(e+A_1^2(t)+A_3^2(t))+\int_0^T\frac{A_2^2(t)}{e+A_1^2(t)}dt\leq CR_T^{1+\kappa+\alpha\beta},
\end{align}
where $\kappa=\max\left\{0,\gamma-2\beta, \beta-\gamma-2\right\}$.
\end{prop}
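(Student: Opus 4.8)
The plan is to run a Grönwall-type argument for the quantity $\log(e+A_1^2+A_3^2)$, controlling its time derivative by the material-derivative norm $A_2^2$ divided by $e+A_1^2$, and then to bound the latter by exploiting the momentum equation tested against $\dot{\mathbf u}$. First I would establish the standard ``$A_1$-controls-$A_3$'' type estimate: from the definition $G=(2\mu+\lambda)\div\mathbf u-(P-\bar P)$ and elliptic regularity for the decomposition $\rho\dot{\mathbf u}=\nabla G+\mu\nabla^\perp\omega$, together with the energy bound \eqref{energy estimate}, one gets $A_3^2\le C A_1^2+C(R_T)$ and reverse-type controls, so it suffices to track $A_1^2$. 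The key differential inequality comes from multiplying the momentum equation by $\dot{\mathbf u}$ and integrating: after integration by parts one obtains, schematically,
\begin{align*}
\frac{d}{dt}A_1^2(t)+A_2^2(t)\le C\big(\text{cubic terms in }\nabla\mathbf u\big)+C\big(\text{pressure terms}\big),
\end{align*}
where the dangerous cubic term $\int (2\mu+\lambda)(\div\mathbf u)^3$ and the analogous vorticity term must be estimated. Here is where Lemma \ref{G,w} and the radial structure enter decisively: since $\nabla G=\partial_r G\,\mathbf e_r$ and $\nabla^\perp\omega=\partial_r\omega\,\mathbf e_\theta$, we have $\|\nabla G\|_{L^p}+\mu\|\nabla^\perp\omega\|_{L^p}\le C\|\rho\dot{\mathbf u}\|_{L^p}\le C\sqrt{R_T}\,A_2$, bypassing the missing boundary conditions on $G$ and $\omega$.

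Next I would estimate the cubic term via Gagliardo–Nirenberg. Writing $\nabla\mathbf u$ in terms of $G$, $\omega$ and $P-\bar P$, the worst contribution is of the form $\int (2\mu+\lambda)|\nabla\mathbf u|^3\lesssim \|2\mu+\lambda\|_{L^\infty}\|\nabla\mathbf u\|_{L^3}^3$, and by \eqref{GN1} applied to $G/(2\mu+\lambda)$ and $\omega$ (both effectively ``mean-zero or controllable'' quantities in this radial setting), $\|\nabla\mathbf u\|_{L^3}^3\le C\|\nabla\mathbf u\|_{L^2}^2\big(\|\nabla G\|_{L^2}+\mu\|\nabla^\perp\omega\|_{L^2}\big)+\text{l.o.t.}\le C A_3^2\sqrt{R_T}\,A_2+\text{l.o.t.}$ Combining with $\|2\mu+\lambda\|_{L^\infty}\le C R_T^\beta$ and $A_3^2\le C(A_1^2+R_T^{\text{power}})$, one arrives at
\begin{align*}
\frac{d}{dt}A_1^2+\tfrac12 A_2^2\le C R_T^{\beta+?}(e+A_1^2)\sqrt{R_T}\,A_2+\text{lower order},
\end{align*}
and absorbing $\sqrt{R_T}A_2\cdot(\cdots)$ into $\frac14 A_2^2$ by Young's inequality produces $\frac{d}{dt}A_1^2+\frac14 A_2^2\le C R_T^{2\beta+?}(e+A_1^2)^2+\cdots$. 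Dividing by $(e+A_1^2)$ and integrating in time, using $\int_0^T A_3^2\,dt\le C$ from the energy estimate to control the $\int$-in-time factor, gives $\log(e+A_1^2(t))\le \log(e+A_1^2(0))+CR_T^{1+\kappa+\alpha\beta}$, where the exponent $\kappa=\max\{0,\gamma-2\beta,\beta-\gamma-2\}$ emerges from carefully balancing the pressure exponents $\gamma$ against $\beta$ in the lower-order terms (the pressure appears both through $P-\bar P$ in $G$ and through $\|\bar P\|$, giving the $\gamma-2\beta$ and $\beta-\gamma-2$ cases), and the $\alpha\beta$ slack absorbs a logarithmic factor $\log(e+A_1^2)$ via $\log x\le C_\alpha x^{\alpha\beta}$.

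I expect the main obstacle to be the bookkeeping of exponents in $R_T$: one must track every appearance of $\|2\mu+\lambda\|_{L^\infty}\sim R_T^\beta$, every $\|\rho\|_{L^\infty}$ from $\|\rho\dot{\mathbf u}\|_{L^2}\le\sqrt{R_T}\,A_2$, and every pressure term $\|P\|\sim R_T^\gamma$, while simultaneously keeping the power of $(e+A_1^2)$ at most quadratic so that the division-and-integration step closes. The choice of how to split powers between the ``$A_1$'' factor and the ``$R_T$'' factor in Young's inequality is delicate and is precisely what forces the $\alpha\beta$ loss; getting the sharp $1+\kappa$ (rather than something worse) requires using the condition $\beta>\frac{\gamma+2}{4}$ exactly at the point where the cubic term's exponent is optimized. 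A secondary technical point is justifying the integration by parts in the $\dot{\mathbf u}$-test without boundary terms for $G$ and $\omega$, which again is handled by the radial identities $\nabla G\parallel\mathbf e_r$, $\nabla^\perp\omega\parallel\mathbf e_\theta$ and the Dirichlet condition on $\mathbf u$.
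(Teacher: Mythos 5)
Your overall strategy is essentially the paper's: test the momentum equation against $\dot{\mathbf u}$, use the radial identities $\nabla G=\partial_rG\,\mathbf e_r$, $\nabla^\perp\omega=\partial_r\omega\,\mathbf e_\theta$ and Lemma~\ref{G,w} to control $\|\nabla G\|_{L^p},\|\nabla^\perp\omega\|_{L^p}$ by $\|\rho\dot{\mathbf u}\|_{L^p}$ without boundary conditions, then divide by $e+A_1^2$ and integrate in time using $\int_0^T A_3^2\,dt\le C$ from the energy identity. The skeleton is right. However, two steps as you have written them would not survive:

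\textbf{(a) The cubic-term estimate.} You write $\|\nabla\mathbf u\|_{L^3}^3\le C\|\nabla\mathbf u\|_{L^2}^2(\|\nabla G\|_{L^2}+\mu\|\nabla^\perp\omega\|_{L^2})+\text{l.o.t.}$ This is not a valid inequality. Gagliardo--Nirenberg gives $\|\nabla\mathbf u\|_{L^3}^3\le C\|\nabla\mathbf u\|_{L^2}^2\|\nabla\mathbf u\|_{H^1}$, and $\|\nabla^2\mathbf u\|_{L^2}$ is \emph{not} controlled by $\|\nabla G\|_{L^2}+\|\nabla\omega\|_{L^2}$, because $\div\mathbf u=(G+P-\bar P)/(2\mu+\lambda)$ and differentiating the denominator produces $\nabla\lambda$, $\nabla P$ terms, i.e.\ $\nabla\rho$, which is completely out of reach at this stage (it only becomes available in Proposition~\ref{Third}). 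The paper never estimates $\|\nabla\mathbf u\|_{L^3}$; instead it writes the troublesome cubic terms $I_1,I_5,I_6,I_7$ as products of $G$, $\omega$, $\div\mathbf u$ and applies Gagliardo--Nirenberg to $G$ and $\omega$ \emph{themselves} (where $\|\nabla G\|_{L^2},\|\nabla\omega\|_{L^2}\le\|\rho\dot{\mathbf u}\|_{L^2}$ is genuinely available), combined with a weighted interpolation such as
\begin{align*}
\norm{\frac{G}{\sqrt{2\mu+\lambda}}}_{L^4}^2\le\norm{\frac{G}{\sqrt{2\mu+\lambda}}}_{L^2}^{1-\alpha}\norm{G}_{L^{2(1+\alpha)/\alpha}}^{1+\alpha}\le CR_T^{\alpha\beta/2}A_1^{1-\alpha}\norm{G}_{H^1}^{1+\alpha},
\end{align*}
which carefully avoids ever differentiating $1/(2\mu+\lambda)$. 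This weighted interpolation is also the true source of the $R_T^{\alpha\beta}$ loss; your explanation that $\alpha\beta$ absorbs a logarithm via $\log x\le C_\alpha x^{\alpha\beta}$ is not how it arises in the proof.

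\textbf{(b) Where the condition $\beta>\frac{\gamma+2}{4}$ enters.} You claim the sharp exponent $1+\kappa$ requires $\beta>\frac{\gamma+2}{4}$ ``at the point where the cubic term's exponent is optimized.'' That condition is not used in the proof of Proposition~\ref{suplog} at all; the proposition delivers $CR_T^{1+\kappa+\alpha\beta}$ without it. The constraint $\beta>\frac{\gamma+2}{4}$ is used afterward, in Proposition~\ref{uniform bound1}, to make the exponent $1+\frac{\kappa}{2}+\frac{\alpha\beta}{2}$ in the Zlotnik estimate strictly less than $\beta$ so that $R_T\le CR_T^{1+\frac{\kappa}{2}+\frac{\alpha\beta}{2}}$ can be closed. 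Conflating the two steps obscures the logical structure. Finally, a smaller point: the inequality $A_3^2\le CA_1^2+CR_T^{\max\{\gamma-\beta,0\}}$ is not an elliptic-regularity statement, just the definitions of $G$ and $A_1,A_3$ together with the energy bound on $\|P-\bar P\|_{L^2}$.
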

\begin{proof}
Rewrite the momentum equation $\eqref{Equ1}_2$ as
\begin{align}\label{RME2}
\rho \dot{\mathbf{u}}=\nabla G+\mu\nabla^\perp\omega.
\end{align}
Note that 
\begin{align*}
\div\dot{\mathbf{u}}&=\frac{D}{Dt}\div\mathbf{u}+(\div\mathbf{u})^2-2\nabla u_1\cdot \nabla^\perp u_2\\
&=\frac{D}{Dt}\left(\frac{G+P-\bar{P}}{2\mu+\lambda}\right)+(\div\mathbf{u})^2-2\partial_r u_r\frac{u_r}{r}-2\partial_r u_\theta\frac{u_\theta}{r},
\end{align*}
and 
\begin{align*}
\nabla^\perp\cdot\dot{\mathbf{u}}=\frac{D}{Dt}w+w\div\mathbf{u}.
\end{align*}
Multiplying equation \eqref{RME2} by $\dot{\mathbf{u}}$ and integrating by parts gives
\begin{align}\label{I_0}
\begin{split}
\frac{1}{2}\frac{d}{dt}A_1^2+A_2^2&=\frac{1}{2}\int_\Omega \frac{2\mu+\lambda-\beta\lambda}{(2\mu+\lambda)^2}G^2\div\mathbf{u}dx-\int_\Omega \frac{\beta \lambda(P-\bar{P})}{(2\mu+\lambda)^2}G\div \mathbf{u}dx+\int_\Omega\frac{\gamma P}{2\mu+\lambda}G\div \mathbf{u}dx\\
&\quad -(\gamma-1)\int_\Omega\frac{G}{2\mu+\lambda}dx\int_\Omega P\div \mathbf{u}dx-\int_\Omega G(\div \mathbf{u})^2dx\\
&\quad+2\int_\Omega G\nabla u_1\cdot\nabla^\perp u_2dx-\frac{\mu}{2}\int_\Omega w^2\div \mathbf{u}dx\\
&=\sum_{i=1}^{7}I_i.
\end{split}
\end{align}
Next, we estimate $I_1$-$I_7$. By using H\"{o}lder inequality and \eqref{GN1}, we get
\begin{align}\label{I_1}
\begin{split}
|I_1|&\leq C\norm{\div\mathbf{u}}_{L^2}\norm{\frac{G^2}{2\mu+\lambda}}_{L^2}\leq C\norm{\div\mathbf{u}}_{L^2}\norm{\frac{G}{\sqrt{2\mu+\lambda}}}_{L^4}^2\\
&\leq CA_3\norm{\frac{G}{\sqrt{2\mu+\lambda}}}_{L^2}^{1-\alpha}\norm{G}_{L^{2(1+\alpha)/\alpha}}^{1+\alpha}\\
&\leq C A_1A_3R_T^{\frac{\alpha\beta}{2}}\norm{G}_{H^1}.
\end{split}
\end{align}
H\"{o}lder inequality and \eqref{energy estimate} imply that
\begin{align}\label{I_2+I_3}
\begin{split}
|I_2|+|I_3|&\leq C\int \frac{P+1}{2\mu+\lambda}|G||\div \mathbf{u}|dx\\
&\leq C\norm{\frac{P+1}{(2\mu+\lambda)^{\frac{3}{2}}}}_{L^{2+\frac{\beta}{\gamma}}}\norm{G}_{L^{2+\frac{4\gamma}{\beta}}}\norm{\sqrt{2\mu+\lambda}\div\mathbf{u}}_{L^2}\\
&\leq CR_T^{\max\left\{0,\frac{\gamma}{2}-\beta\right\}}\norm{G}_{H^1}A_3.
\end{split}
\end{align}
By using the energy equality \eqref{energy estimate}, one has
\begin{align}\label{I_4}
\begin{split}
|I_4|&=(\gamma-1)\left|\int_\Omega \frac{P-\bar{P}}{2\mu+\lambda}dx\int_\Omega(P-\bar{P})\div\mathbf{u}dx\right|\\
&=(\gamma-1)\left|\int_\Omega \frac{P-\bar{P}}{2\mu+\lambda}dx\int_\Omega(G-(2\mu+\lambda)\div\mathbf{u})\div\mathbf{u}dx\right|\\
&\leq C\norm{G}_{L^2}A_3+CA_3^2.
\end{split}
\end{align}
Next, it follows from $\eqref{I_1}$ and $\eqref{I_2+I_3}$ that
\begin{align}\label{I_5}
\begin{split}
|I_5|&\leq \int_\Omega \frac{G^2}{2\mu+\lambda}|\div\mathbf{u}|dx+\int_\Omega\frac{|P-\bar{P}|}{2\mu+\lambda}|G||\div\mathbf{u}|dx\\
&\leq C A_1A_3R_T^{\frac{\alpha\beta}{2}}\norm{G}_{H^1}+CR_T^{\max\left\{0,\frac{\gamma}{2}-\beta\right\}}\norm{G}_{H^1}A_3.
\end{split}
\end{align}
Next, the radial symmetry structure and \eqref{boundary condition2} indicates that
\begin{align}\label{I_6}
\begin{split}
|I_6|&=2\left|\int_\Omega G\left(\partial_r u_r\frac{u_r}{r}+\partial_r u_\theta\frac{u_\theta}{r}\right)dx\right|\\
&=4\pi\left|\int_0^R G\left(\partial_r u_ru_r+\partial_r u_\theta u_\theta\right)dr\right|\\
&=2\pi \left|\int_0^R \partial_rG(u_r^2+u_\theta^2)dr\right|\\
&\leq C\norm{\nabla G}_{L^2}\norm{\mathbf{u}}_{L^\infty}\left(\norm{\frac{u_r}{r}}_{L^2}+\norm{\frac{u_\theta}{r}}_{L^2}\right)\\
&\leq C\norm{\nabla G}_{L^2}\norm{\nabla \mathbf{u}}_{L^2}^2\\
&\leq C\norm{\nabla G}_{L^2}A_1A_3+CR_T^{\max\left\{\frac{\gamma}{2}-\beta,0\right\}}\norm{\nabla G}_{L^2}A_3,
\end{split}
\end{align}
where we used
\begin{align*}
\norm{\nabla\mathbf{u}}_{L^2}\leq C\norm{\frac{G}{2\mu+\lambda}}_{L^2}+C\norm{\frac{P-\bar{P}}{2\mu+\lambda}}_{L^2}+C\norm{\omega}_{L^2}\leq CA_1+CR_T^{\max\left\{\frac{\gamma}{2}-\beta,0\right\}}.
\end{align*}
Finally, Gagliardo-Nirenberg inequality \eqref{GN1} implies that
\begin{align}\label{I_7}
|I_7|\leq C\norm{\div \mathbf{u}}_{L^2}\norm{w}_{L^4}^2\leq  C\norm{w}_{H^1}A_1A_3,
\end{align}
Substituting \eqref{I_1}-\eqref{I_7} into \eqref{I_0}, we obtain
\begin{align}\label{new I_0}
\begin{split}
&\quad\frac{d}{dt}A_1^2+2A_2^2\\
&\leq C (R_T^{\frac{\alpha\beta}{2}}\norm{G}_{H^1}+\norm{w}_{H^1})A_1A_3+CR_T^{\max\left\{0,\frac{\gamma}{2}-\beta\right\}}\norm{G}_{H^1}A_3+CA_3^2\\
&\leq \varepsilon R_T^{-1}(\norm{G}_{H^1}^2+\norm{w}_{H^1}^2)+CR_T^{1+\alpha\beta}A_1^2A_3^2+CR_T^{\max\left\{1,\gamma-2\beta+1\right\}}A_3^2\\
&\leq A_2^2+CR_T^{1+\alpha\beta}A_1^2A_3^2+CR_T^{\max\left\{1,\gamma-2\beta+1,\beta-\gamma-1\right\}}A_3^2,
\end{split}
\end{align}
where we used
\begin{align*}
|\bar{G}|=\left|\int_\Omega\lambda\div\mathbf{u}dx\right|\leq \norm{\sqrt{\lambda}}_{L^2}A_3\leq R_T^{\max\left\{\frac{\beta-\gamma}{2},0\right\}}A_3.
\end{align*}
Note that the following simple fact shows the relationship between $A_1$ and $A_3$,
\begin{align}\label{A_1A_3}
\begin{split}
A_3^2&\leq C\norm{\sqrt{2\mu+\lambda}\div\mathbf{u}}_{L^2}^2+C\norm{w}_{L^2}^2\\
&\leq C\norm{\frac{G}{\sqrt{2\mu+\lambda}}}_{L^2}^2+C\norm{\frac{P-\bar{P}}{\sqrt{2\mu+\lambda}}}_{L^2}^2+C\norm{w}_{L^2}^2\\
&\leq CA_1^2+CR_T^{\max\left\{\gamma-\beta,0\right\}}.
\end{split}
\end{align}
Dividing equation \eqref{new I_0} by $e+A_1^2$ shows
\begin{align*}
\frac{d}{dt}\log(e+A_1^2)+\frac{A_2^2}{e+A_1^2}\leq CR_T^{\max\left\{1+\alpha\beta,1,\gamma-2\beta+1,\beta-\gamma-1\right\}}A_3^2.
\end{align*}
Integrating with respect to time gives
\begin{align}\label{A_1}
\sup_{0\le t\le T}\log(e+A_1^2)+\int_0^T\frac{A_2^2}{e+A_1^2}dt\leq CR_T^{1+\kappa+\alpha\beta}.
\end{align}
Therefore, the above equation combined with $\eqref{A_1A_3}$ completes the proof.
\end{proof}

Next, we will employ the Zlotnik inequality to obtain the uniform upper bound of the density.
\begin{prop}\label{uniform bound1}
Let $\beta>\max\left\{1,\frac{\gamma+2}{4}\right\}$, there exists a constant $C$ depending only on $\mu,\beta,\gamma,R$ and initial data, but not on $T$, such that for any $T>0$, 
\begin{align}\label{uniform bound 1}
\sup_{0\le t\le T}\norm{\rho (t)}_{L^\infty}\leq C.
\end{align}
\end{prop}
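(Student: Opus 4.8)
The plan is to run the Zlotnik inequality (Lemma~\ref{Zlotnik}) along particle trajectories, using the renormalized flow structure \eqref{Flow  structure}, with the pressure $P$ supplying the damping and the swirl term entering with a favourable sign. Fix $r_0\in[0,R]$ and let $X(t)=X(t;r_0)$ solve $\dot X=u_r(X,t)$, $X(0)=r_0$; the Dirichlet condition \eqref{boundary condition2} keeps $X(t)\in[0,R]$ and makes $r_0\mapsto X(t;r_0)$ a bijection of $[0,R]$ for each $t$, so it suffices to bound $\theta\bigl(\rho(X(t;r_0),t)\bigr)$ uniformly in $r_0,t$. Writing $y(t)=(\theta+\xi)\bigl(X(t),t\bigr)$ and evaluating \eqref{Flow  structure} along $X(t)$, using $\int_R^{X}=-\int_X^R$, one gets
\begin{align*}
y'(t)=-\bigl(P-\bar P\bigr)\bigl(X(t),t\bigr)+\int_{X(t)}^R\frac{\rho u_r^2}{s}\,ds-\int_{X(t)}^R\frac{\rho u_\theta^2}{s}\,ds-G(R,t).
\end{align*}
Since $\theta(\rho)=2\mu\log\rho+\tfrac1\beta\rho^\beta$ is a strictly increasing bijection of $(0,\infty)$ onto $\R$, $P=\rho^\gamma$ is an increasing function of $\theta=y-\xi$, so once $\Xi:=\|\xi\|_{L^\infty L^\infty}$ is controlled one has $P(X(t),t)\ge\bigl(\theta^{-1}(y(t)-\Xi)\bigr)^\gamma=:\tilde g(y(t))$, with $\tilde g$ increasing and $\tilde g(\infty)=\infty$. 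Discarding the nonpositive swirl term, $y$ then obeys a Zlotnik-type inequality $y'\le g(y)+h'$ with $g=-\tilde g$ continuous, $g(\infty)=-\infty$, and $h'(t)=\bar P+\int_{X(t)}^R\frac{\rho u_r^2}{s}\,ds-G(R,t)$.

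First I would check the hypothesis $h(t_2)-h(t_1)\le N_0+N_1(t_2-t_1)$ of Lemma~\ref{Zlotnik} with $N_0,N_1$ bounded by fixed powers of $R_T$. By the energy balance \eqref{energy estimate} the term $\bar P$ is bounded, contributing only to $N_1$. For the $u_r$-term, the radial identity $|\nabla\mathbf u|^2\ge(u_r^2+u_\theta^2)/r^2$ yields $\int_{X}^R\tfrac{u_r^2}{s}\,ds\le\tfrac1{2\pi}\|\nabla\mathbf u\|_{L^2}^2$, so $\int_{t_1}^{t_2}\!\int_{X}^R\tfrac{\rho u_r^2}{s}\,ds\,d\tau\le\tfrac{R_T}{2\pi}\int_{t_1}^{t_2}\|\nabla\mathbf u\|_{L^2}^2\,d\tau\le CR_T$ by the energy dissipation, a contribution to $N_0$. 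For $G(R,t)$ I would substitute the boundary representation \eqref{FBT} of Lemma~\ref{F boundary}: after integrating in $\tau$, the total-derivative term is controlled by the conserved mass and the initial energy (into $N_0$); the term $\int_0^R\rho(u_r^2+u_\theta^2)r\,dr=\tfrac1{2\pi}\|\sqrt\rho\mathbf u\|_{L^2}^2$ contributes to $N_1$; and $\int_0^R 2Gr\,dr=\tfrac1\pi\int_\Omega\lambda\div\mathbf u\,dx=\tfrac1\pi\int_\Omega\rho^\beta\div\mathbf u\,dx$, which by the identity $\int_\Omega\rho^\beta\div\mathbf u\,dx=\tfrac1{1-\beta}\tfrac{d}{dt}\int_\Omega\rho^\beta\,dx$ (valid for $\beta\ne1$; integrate by parts and use the continuity equation) integrates in $\tau$ to $\tfrac1{\pi(1-\beta)}\bigl[\int_\Omega\rho^\beta\bigr]_{t_1}^{t_2}$, bounded by $CR_T^{\max\{0,\beta-\gamma\}}$ (into $N_0$). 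Hence $N_1\le C$ and $N_0\le C\bigl(1+R_T+R_T^{\max\{0,\beta-\gamma\}}\bigr)$.

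The heart of the proof, and the step I expect to be the main obstacle, is the bound $\Xi=\|\xi\|_{L^\infty L^\infty}\le CR_T^{1+\frac\kappa2+\alpha\beta}$ for every $\alpha\in(0,1)$. Proposition~\ref{suplog} and the relations \eqref{A_1A_3} supply $\log\bigl(e+\|\nabla\mathbf u\|_{L^2}\bigr)\le CR_T^{1+\kappa+\alpha\beta}$. Since $\nabla\xi=\rho u_r\mathbf{e_r}$ and $\xi|_{\partial\Omega}=0$, one has the \emph{uniform} bound $\|\nabla\xi\|_{L^2}=\|\rho u_r\|_{L^2}\le R_T^{1/2}\|\sqrt\rho u_r\|_{L^2}\le CR_T^{1/2}$; a logarithmic interpolation of $\|\xi\|_{L^\infty}$ between $\|\nabla\xi\|_{L^2}$ and a higher $W^{1,q}$-norm (of Gagliardo--Nirenberg / Brezis--Gallouet type, based on \eqref{GN2} with $q$ taken close to $2$), arranged so that the higher norm enters only through $\log(e+\|\nabla\mathbf u\|_{L^2})$, then produces the claimed estimate. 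The delicate point is exactly that one has no uniform-in-time control of $\|\nabla\mathbf u\|_{L^2}$ itself, only of its logarithm, so the embedding must be set up to exploit precisely that, and the resulting powers of $R_T$ have to be tracked with care.

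Finally, feeding $N_0,N_1,\Xi$ and the initial-data bound on $y(0)$ into Lemma~\ref{Zlotnik} bounds $y(t)$, hence $\theta\bigl(\rho(X(t),t)\bigr)=y(t)-\xi(X(t),t)\le y(t)+\Xi\le C\bigl(1+R_T+R_T^{\max\{0,\beta-\gamma\}}+R_T^{1+\frac\kappa2+\alpha\beta}\bigr)$. Taking the supremum over $r_0$ and $t$ and using $\rho^\beta\le 1+\beta\,\theta(\rho)_+$ gives
\begin{align*}
R_T^\beta\le C\Bigl(1+R_T^{\max\{1,\;\beta-\gamma,\;1+\frac\kappa2+\alpha\beta\}}\Bigr).
\end{align*}
With $\kappa=\max\{0,\gamma-2\beta,\beta-\gamma-2\}$, the hypothesis $\beta>\max\{1,\tfrac{\gamma+2}{4}\}$ guarantees that for $\alpha>0$ small the exponent on the right is strictly less than $\beta$ (the binding constraint $1+\tfrac\kappa2+\alpha\beta<\beta$ becomes, when $\gamma>2\beta$, $\beta>\tfrac{\gamma+2}{2(2-\alpha)}$, which is exactly the origin of $\tfrac{\gamma+2}{4}$). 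Therefore $R_T\le C$ with $C$ depending only on $\mu,\beta,\gamma,R$ and the initial data, which is \eqref{uniform bound 1}.
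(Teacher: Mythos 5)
Your proposal follows essentially the same route as the paper: it sets up Zlotnik's inequality along particle paths via the renormalized transport structure \eqref{Flow  structure}, estimates the drift terms from energy and the boundary representation \eqref{FBT}, and bounds $\|\xi\|_{L^\infty L^\infty}$ through a Brezis--Gallou\"et-type logarithmic embedding combined with Proposition~\ref{suplog}, finally closing via the same power count $\beta>\max\{1,\tfrac{\gamma+2}{4}\}$. The only cosmetic differences are that you absorb $\xi$ into the Zlotnik variable $y$ rather than into $h$, discard the swirl term by sign instead of bounding $|h(t_2)-h(t_1)|$ two-sidedly, and track a slightly looser exponent $R_T^{1+\kappa/2+\alpha\beta}$ for $\|\xi\|_\infty$ (the paper obtains $R_T^{1+\kappa/2+\alpha\beta/2}$), none of which affects the conclusion since $\alpha$ is free to be small.
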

\begin{proof}
Reviewing the previous transport structure
\begin{align*}
\frac{D}{Dt}(\theta+\xi)+\int_R^r\frac{\rho}{s}(u_r^2-u_\theta^2)ds+P-\bar{P}+G(R,t)=0.
\end{align*}
Let 
\begin{align*}
y(t)&=\theta(t,x(t)),\quad g(y)=-P(\theta^{-1}(y)),\\
h(t)&=-\xi(t,x(t))+\int_0^t\int_r^R\frac{\rho }{s}(u_r^2-u_\theta^2)dsd\tau+\int_0^t\bar{P}(\tau)d\tau-\int_0^tG(R,\tau)d\tau.
\end{align*}
Simple calculations show that
\begin{align}\label{h estimate}
\begin{split}
|h(t_2)-h(t_1)|&\leq 2\norm{\xi}_{L^\infty L^\infty}+\int_{t_1}^{t_2}\int_r^R\frac{\rho }{s}(u_r^2+u_\theta^2)dsd\tau+\int_{t_1}^{t_2}\bar{P}d\tau+\left|\int_{t_1}^{t_2}G(R,\tau)d\tau\right|\\
&:=\sum_{i=1}^{4}J_i.
\end{split}
\end{align}
Next, we estimate $J_1$-$J_4$. By Proposition \ref{suplog}, for some $q>2$, we have 
\begin{align*}
\begin{split}
\norm{\xi(t)}_{L^\infty}&\leq C\norm{\rho \mathbf{u}}_{L^2}\log^{\frac{1}{2}}(e+\norm{\rho \mathbf{u}}_{L^q})+C\norm{\rho\mathbf{u}}_{L^{\frac{2\gamma}{\gamma+1}}}+C\\
&\leq CR_T^{\frac{1}{2}}\log^{\frac{1}{2}}(e+R_T+\norm{\nabla\mathbf{u}}_{L^2})+C\\
&\leq CR_T^{1+\frac{\kappa}{2}+\frac{\alpha\beta}{2}},
\end{split}
\end{align*}
which means that
\begin{align*}
J_1\leq CR_T^{1+\frac{\kappa}{2}+\frac{\alpha\beta}{2}}.
\end{align*}
By using the energy equality \eqref{energy estimate}, we obtain
\begin{align*}
J_2\leq CR_T\int_0^T\int_\Omega|\nabla \mathbf{u}|^2dxdt\leq CR_T,
\end{align*}
where we used the following simple fact
\begin{align*}
|\nabla \mathbf{u}|^2=(\partial_r u_r)^2+(\partial_r u_\theta)^2+\frac{u_r^2}{r^2}+\frac{u_\theta^2}{r^2}.
\end{align*}
The energy equality $\eqref{energy estimate}$ indicates that
\begin{align*}
J_3\leq C(t_2-t_1).
\end{align*}
Finally, we estimate the boundary term $J_4$. Recalling that
\begin{align*}
G(R,t)=\frac{1}{R^2}\left\{\frac{d}{dt}\int_0^R\rho u_rr^2dr+\int_0^R2rG(r,t)dr-\int_0^R\rho(u_r^2+u_\theta^2)rdr\right\}
\end{align*}
Employing the energy equality \eqref{energy estimate} again, we obtain
\begin{align*}
J_4&\leq C\left|\int_0^R\rho u_r r^2dr(t_2)-\int_0^R\rho u_r r^2dr(t_1)\right|+C\left|\int_{t_1}^{t_2}\int_\Omega \rho^\beta \div\mathbf{u}dxd\tau\right|\\
&\quad+C\left|\int_{t_1}^{t_2}\int_0^R\rho (u_r^2+u_\theta^2)rdrdt\right|\\
&\leq C+CR_T^{\beta-\gamma}+C(t_2-t_1),
\end{align*}
where we used 
\begin{align*}
\partial_t(\rho^\beta)+\div(\rho^\beta\mathbf{u})+(\beta-1)\rho^\beta\div\mathbf{u}=0.
\end{align*}
Substituting the above estimates into \eqref{h estimate} gives
\begin{align*}
|h(t_2)-h(t_1)|\leq CR_T^{1+\frac{\kappa}{2}+\frac{\alpha\beta}{2}}+CR_T^{\beta-\gamma}+C(t_2-t_1).
\end{align*}
By using Lemma \ref{Zlotnik}, we obtain
\begin{align}\label{Zlotnik bound}
\theta(t,x(t))\leq CR_T^{1+\frac{\kappa}{2}+\frac{\alpha\beta}{2}}+CR_T^{\beta-\gamma},
\end{align}
taking the supremum in time and space completes the proof.
\end{proof}
\subsection{Uniform upper bound of density for $1<\beta\leq \gamma$}
In this subsection, we prove the uniform upper bound of the density when $1<\beta\leq \gamma$. First, the following lemma shows that the boundary integral of $G$ can be independent of time.
\begin{prop}\label{G boundary}
Suppose that $\gamma>1$, $1\leq \beta\leq \gamma$, there exists a constant $C$ depending only on $\mu,\beta,\gamma,R$ and initial data, but not on $T$, such for any $t\in [0,T]$,
\begin{align}\label{H est}
\left|\int_0^t G(R,\tau)d\tau\right|\leq C.
\end{align}
\end{prop}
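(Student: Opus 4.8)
The plan is to integrate the boundary representation \eqref{FBT} of Lemma \ref{F boundary} in time and to show that each of the three resulting time integrals is bounded by a quantity depending only on the conserved mass $M_0=\int_\Omega\rho_0\,dx$, the initial energy $E(0)$, and on $\mu,\beta,\gamma,R,\norm{\rho_0}_{L^\infty}$. Integrating \eqref{FBT} over $[0,t]$,
\begin{align*}
\int_0^t G(R,\tau)\,d\tau=\frac{1}{R^2}\left[\int_0^R\rho u_r r^2\,dr\right]_0^t+\frac{1}{R^2}\int_0^t\left(2\int_0^R G r\,dr-\int_0^R\rho(u_r^2+u_\theta^2)r\,dr\right)d\tau ,
\end{align*}
and I would estimate the three pieces separately.

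For the boundary difference $\left[\int_0^R\rho u_r r^2\,dr\right]_0^t$, a Cauchy--Schwarz split $\int_0^R\rho u_r r^2\,dr\le R\big(\int_0^R\rho r\,dr\big)^{1/2}\big(\int_0^R\rho u_r^2 r\,dr\big)^{1/2}$ controls it by the conserved mass and by $E(t)\le E(0)$ from \eqref{energy estimate}, so it is bounded uniformly in $t$. For the kinetic term the point is to avoid $\norm{\rho}_{L^\infty}$ --- the very quantity one is trying to control --- and instead bound $\int_0^R\rho(u_r^2+u_\theta^2)r\,dr=\frac{1}{2\pi}\int_\Omega\rho|\mathbf{u}|^2\,dx\le\frac{M_0}{2\pi}\norm{\mathbf{u}}_{L^\infty}^2\le\frac{M_0}{4\pi^2}\norm{\nabla\mathbf{u}}_{L^2}^2$ by Lemma \ref{u inf}; its time integral is then absorbed by the energy dissipation $\int_0^T\norm{\nabla\mathbf{u}}_{L^2}^2\,d\tau\le E(0)/\mu$ provided by \eqref{energy estimate}.

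The genuinely delicate piece is $\int_0^t\!\int_0^R G r\,dr\,d\tau$. Since $\int_\Omega\div\mathbf{u}\,dx=0$ and $\int_\Omega(P-\bar{P})\,dx=0$, one has $2\int_0^R G r\,dr=\frac1\pi\int_\Omega G\,dx=\frac1\pi\int_\Omega\rho^\beta\div\mathbf{u}\,dx$. To make its time integral $T$-independent I would use the renormalized continuity equation: multiplying $\eqref{Equ1}_1$ by $h'(\rho)$ gives $\partial_t h(\rho)+\div(h(\rho)\mathbf{u})+(\rho h'(\rho)-h(\rho))\div\mathbf{u}=0$, and the choice $h(\rho)=\frac{1}{\beta-1}\rho^\beta$ for $\beta>1$ (respectively $h(\rho)=\rho\log\rho$ for $\beta=1$) makes $\rho h'(\rho)-h(\rho)$ equal to $\rho^\beta$ (respectively $\rho$). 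Integrating over $\Omega$ (the convective term drops by \eqref{boundary condition2}) and then over $[0,t]$ telescopes the time integral into $\int_\Omega h(\rho_0)\,dx-\int_\Omega h(\rho(t))\,dx$. It remains to bound $\int_\Omega h(\rho(t))\,dx$ uniformly in $t$: for $\beta>1$, $0\le\frac{1}{\beta-1}\int_\Omega\rho^\beta\,dx\le\frac{(\pi R^2)^{1-\beta/\gamma}}{\beta-1}\big((\gamma-1)E(0)\big)^{\beta/\gamma}$ by H\"older and $\beta\le\gamma$; for $\beta=1$, the elementary inequalities $s\log s\ge-1/e$ and $s\log s\le\frac{s^\gamma}{\gamma-1}$ give $-\frac{\pi R^2}{e}\le\int_\Omega\rho\log\rho\,dx\le\frac{1}{\gamma-1}\int_\Omega\rho^\gamma\,dx\le E(0)$, and the same bounds hold at $t=0$. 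Combining the three estimates yields \eqref{H est}.

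I expect the main difficulty to be conceptual rather than computational: each of the three time integrals is, a priori, the integral of a merely bounded (not decaying) quantity and would naively grow linearly in $T$, so one must identify the two saving mechanisms --- the integrability in time of $\norm{\nabla\mathbf{u}}_{L^2}^2$ from the basic energy law for the kinetic term, and the telescoping structure of the renormalized transport equation for $\int_\Omega G\,dx$ --- and the assumption $\beta\le\gamma$ is precisely what makes $\int_\Omega\rho^\beta\,dx$ controllable by the energy (and at the endpoint $\beta=1$ makes $\int_\Omega\rho\log\rho\,dx$ controllable via $s\log s\le s^\gamma/(\gamma-1)$). The endpoint $\beta=1$ is the only place where the renormalization function must be taken to be $\rho\log\rho$ rather than the obvious power of $\rho$.
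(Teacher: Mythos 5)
Your proposal is correct and follows essentially the same route as the paper: integrate the boundary representation \eqref{FBT}, bound the boundary difference by mass and energy, control the kinetic term via $\norm{\mathbf{u}}_{L^\infty}^2\le\norm{\nabla\mathbf{u}}_{L^2}^2/(2\pi)$ and the energy dissipation, and telescope the $\int_\Omega\rho^\beta\div\mathbf{u}\,dx$ term via the renormalized continuity equation (with the $\rho\log\rho$ replacement at $\beta=1$), using $\beta\le\gamma$ to bound $\int_\Omega\rho^\beta\,dx$ by the energy. The only superficial difference is that you phrase the renormalization with a general $h(\rho)$ while the paper writes the identity $\int_\Omega\rho^\beta\div\mathbf{u}\,dx=\frac{1}{1-\beta}\frac{d}{dt}\int_\Omega\rho^\beta\,dx$ directly, which is the same computation.
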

\begin{proof}
Integrating \eqref{FBT} over \((0, t)\) gives
\begin{align}
\begin{split}
\int_0^t G(R,\tau)d\tau&=\dfrac{1}{R^2}\left[\int_0^R\rho u_rr^2dr|_{\tau=t}-\int_0^R\rho u_rr^2dr|_{\tau=0}+2\int_0^t\int_0^RG(r,\tau)rdrd\tau\right.\\
&\quad\left. -\int_0^t\int_0^R\rho (u_r^2+u_\theta^2)rdrd\tau\right]\\
&:=K_1+K_2+K_3+K_4.
\end{split}
\end{align}
By using the energy equality \eqref{energy estimate}, we obtain the estimates for the terms \(K_1\) and \(K_2\),
\begin{align*}
|K_1|\leq C\int_0^R(\rho(t)+\rho u_r^2(t))rdr\leq C,
\end{align*}
and 
\begin{align*}
|K_2|\leq C\int_0^R(\rho_0+\rho_0 u_{r_0}^2)rdr\leq C.
\end{align*}
Note that when $1<\beta\leq \gamma$,
\begin{align*}
\int_0^t\int_0^RG(r,\tau)rdrd\tau&=\frac{1}{2\pi}\int_0^t\int_\Omega\rho^\beta\div\mathbf{u}dxd\tau=\frac{1}{2\pi(1-\beta)}\int_0^t\frac{d}{dt}\int_\Omega\rho^\beta dxd\tau\\
&=\frac{1}{2\pi(1-\beta)}\left(\int_\Omega \rho^\beta(t)dx-\int_\Omega \rho_0^\beta dx\right).
\end{align*}
and when $\beta=1$,
\begin{align*}
\int_0^t\int_0^RG(r,\tau)rdrd\tau&=\frac{1}{2\pi}\int_0^t\int_\Omega\rho\div\mathbf{u}dxd\tau=-\frac{1}{2\pi}\int_0^t\frac{d}{dt}\int_\Omega\rho\log\rho dxd\tau\\
&=-\frac{1}{2\pi}\left(\int_\Omega \rho\log\rho(t)dx-\int_\Omega \rho_0\log\rho_0dx\right).
\end{align*}
Therefore, we use equality \eqref{energy estimate} to obtain the estimate for \(K_3\),
\begin{align*}
|K_3|\leq C.
\end{align*}
Using the energy equality \eqref{energy estimate} and \eqref{u infty} , one has the estimate for \(K_4\),
\begin{align*}
|K_4|\leq C\int_0^t\norm{\mathbf{u}}_{L^\infty}^2\int_0^R\rho rdrd\tau\leq C\int_0^t\norm{\nabla \mathbf{u}}_{L^2}^2d\tau\leq C. 
\end{align*}
Combining the estimates for \(K_1\) to \(K_4\), we complete the proof.
\end{proof}

Next, we use a key idea from \cite{2023 Fan-Wang-Li-arXiv} to obtain the time-independent higher integrability of the density field.
\begin{prop}\label{new potential}
Suppose that $\gamma>1$, $1\leq \beta\leq \gamma$, for any $k\geq 1$, there exists positive constants $M$ and $C$ depending only on $\mu,\beta,\gamma,k ,R$ and initial data, but not on $T$, such that
\begin{align}\label{rho High}
\sup_{0\leq t\leq T}\int_\Omega \rho^{k\beta\gamma+1}dx\leq C.
\end{align}
\end{prop}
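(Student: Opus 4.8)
The plan is to run the renormalized mass equation against high powers of $\rho$, extract the damping built into the pressure, and close the resulting differential inequality by an identity for $\int_\Omega\rho^{\alpha-\beta}G\,dx$ obtained from the momentum equation that the radial geometry makes tractable; the full statement is then reached by a finite iteration on the exponent, the number of iterations being the constant $M$.

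First I would derive, for every $\alpha>1$, the identity
\begin{align*}
\frac{d}{dt}\int_\Omega\rho^\alpha\,dx+(\alpha-1)\int_\Omega\frac{\rho^{\alpha+\gamma}}{2\mu+\lambda}\,dx=-(\alpha-1)\int_\Omega\frac{\rho^\alpha G}{2\mu+\lambda}\,dx+(\alpha-1)\bar P\int_\Omega\frac{\rho^\alpha}{2\mu+\lambda}\,dx,
\end{align*}
by multiplying $\eqref{Equ1}_1$ by $\alpha\rho^{\alpha-1}$, integrating over $\Omega$, and using $(2\mu+\lambda)\div\mathbf u=G+P-\bar P$. Since $1\le\beta\le\gamma$, the pressure term on the left is coercive: $\frac{\rho^{\alpha+\gamma}}{2\mu+\lambda}\ge c\,\rho^{\alpha+\gamma-\beta}-C$, and after integration the right side is $\ge c\left(\int_\Omega\rho^\alpha\,dx\right)^{(\alpha+\gamma-\beta)/\alpha}-C$ with exponent $\ge 1$; this is the damping mechanism of $P$. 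The remaining term $\int_\Omega\frac{\rho^\alpha G}{2\mu+\lambda}\,dx$ carries no a priori sign, and treating it is the main point. I would split $\frac{\rho^\alpha}{2\mu+\lambda}=\rho^{\alpha-\beta}-\frac{2\mu\rho^{\alpha-\beta}}{2\mu+\lambda}$, the second term of strictly lower order, and compute $\int_\Omega\rho^{\alpha-\beta}G\,dx$ from $\eqref{Equ1}_2$. Let $v=\phi(r)\mathbf e_r$ with $\phi(r)=\frac1r\int_0^r(\rho^{\alpha-\beta}-\overline{\rho^{\alpha-\beta}})s\,ds$, so that $\div v=\rho^{\alpha-\beta}-\overline{\rho^{\alpha-\beta}}$ and $v|_{\partial\Omega}=0$ (the radial Bogovskii field, cf.\ Lemma \ref{Bogovskii}). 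Being radial, $v$ is curl-free, hence $\int_\Omega\nabla\mathbf u:\nabla v\,dx=\int_\Omega\div\mathbf u\,\div v\,dx$; testing $\eqref{Equ1}_2$ with $v$ therefore reduces the viscous part to $-\int_\Omega(G+P-\bar P)\div v\,dx$, the pressure contributions cancel, and one is left with
\begin{align*}
\int_\Omega\rho^{\alpha-\beta}G\,dx=-\frac{d}{dt}\int_\Omega\rho\mathbf u\cdot v\,dx+\int_\Omega\rho\mathbf u\cdot v_t\,dx+\int_\Omega\rho\mathbf u\otimes\mathbf u:\nabla v\,dx+\overline{\rho^{\alpha-\beta}}\int_\Omega G\,dx.
\end{align*}

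Combining the two identities yields
\begin{align*}
\frac{d}{dt}\Bigl(\int_\Omega\rho^\alpha\,dx-(\alpha-1)\int_\Omega\rho\mathbf u\cdot v\,dx\Bigr)+c\int_\Omega\rho^{\alpha+\gamma-\beta}\,dx\le(\alpha-1)\Bigl(\Bigl|\int_\Omega\rho\mathbf u\cdot v_t\,dx\Bigr|+\Bigl|\int_\Omega\rho\mathbf u\otimes\mathbf u:\nabla v\,dx\Bigr|+\text{l.o.t.}\Bigr).
\end{align*}
It then remains to absorb the correction $\int_\Omega\rho\mathbf u\cdot v\,dx$ and to bound the right-hand side by quantities controlled by the energy equality \eqref{energy estimate}, the elementary bounds $\int_\Omega(\rho+\rho^\beta+\rho^\gamma)\,dx\le C$, Lemma \ref{Bogovskii} (to write $v_t$ through $\mathcal B(\div(\rho^{\alpha-\beta}\mathbf u))$ and $\mathcal B(\rho^{\alpha-\beta}\div\mathbf u-\overline{\rho^{\alpha-\beta}\div\mathbf u})$), and the $L^p$ bound of $\rho$ already obtained at the previous value of the exponent. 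The radial structure is again decisive: the pointwise estimate $\norm{\phi}_{L^\infty}\le C\norm{\rho^{\alpha-\beta}}_{L^2}$ (H\"older against $s\,ds$) replaces the embedding $W^{1,q}\hookrightarrow L^\infty$ at the otherwise unavailable endpoint $q=2$, so that $\bigl|\int_\Omega\rho\mathbf u\cdot v\,dx\bigr|\le\norm{\sqrt\rho\mathbf u}_{L^2}\norm{\sqrt\rho v}_{L^2}\le C\norm{\rho^{\alpha-\beta}}_{L^2}$, which, since $\beta\ge1$, interpolates (using the induction hypothesis) into $(\alpha-1)\bigl|\int_\Omega\rho\mathbf u\cdot v\,dx\bigr|\le\tfrac12\int_\Omega\rho^\alpha\,dx+C$. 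A Gr\"onwall / damped-ODE argument then produces $\sup_{0\le t\le T}\int_\Omega\rho^\alpha\,dx\le C$ together with $\int_0^\infty\int_\Omega\rho^{\alpha+\gamma-\beta}\,dx\,dt\le C$. Iterating finitely many times, starting from $\alpha=1$ (mass) and $\alpha=\gamma$ (energy), reaches $\alpha=k\beta\gamma+1$.

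I expect the main obstacle to be exactly the index bookkeeping in the last paragraph: choosing $\alpha$ and the interpolation exponents so that the time-boundary term $\int_\Omega\rho\mathbf u\cdot v\,dx$ is absorbable into $\tfrac12\int_\Omega\rho^\alpha\,dx$ and the genuinely nonlinear terms $\int_\Omega\rho\mathbf u\cdot v_t\,dx$, $\int_\Omega\rho\mathbf u\otimes\mathbf u:\nabla v\,dx$ are integrable in time against $\int_0^T\norm{\nabla\mathbf u}_{L^2}^2\,dt$. When $\beta=1$ there is no slack anywhere, and it is precisely the radial bound $\norm{\phi}_{L^\infty}\le C\norm{\rho^{\alpha-\beta}}_{L^2}$ (unavailable on general domains, where one is forced to $q>2$) that makes the scheme close — this is how the restriction $\beta>1$ of \cite{2023 Fan-Wang-Li-arXiv} is removed.
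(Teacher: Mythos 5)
Your proposal takes a genuinely different route from the paper. The paper's proof of Proposition \ref{new potential} works directly with the Vaigant--Kazhikhov transport structure \eqref{FS2}: it first establishes the time-uniform bound on $H(t)=\int_0^t G(R,s)\,ds$ (Proposition \ref{G boundary}), then tests the equation for $\theta+\xi+H$ against $\rho f^{k\gamma-1}$ with $f=(\theta+\xi+H-M)_+$, and extracts the damping from the elementary pointwise inequality \eqref{damping}. The swirl term $\int_r^R\frac\rho s(u_r^2-u_\theta^2)\,ds$ must then be estimated directly (the term $N_1$), and the restriction $\beta\ge1$, $\beta\le\gamma$ is used through a rather delicate interpolation in the estimate of $N_3$. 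You instead test the renormalized mass equation against $\rho^{\alpha-1}$ and trade $\int\frac{\rho^\alpha G}{2\mu+\lambda}\,dx$ for a Bogovskii pairing $\int\rho^{\alpha-\beta}G\,dx$ against the momentum equation, a scheme structurally closer to the paper's Proposition \ref{Prop Bogovskii} (which is run only with the exponent $\alpha=1$, i.e.\ $\div v=\rho-\rho_s$) but lifted to high powers. Both routes use the damping of $P$ and the Bogovskii operator, but in complementary places; your scheme is more self-contained in that it does not need Proposition \ref{G boundary} at all, and the swirl contribution is absorbed invisibly into $\int\rho\mathbf u\otimes\mathbf u:\nabla v\,dx$ rather than estimated by hand.

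Two substantive cautions. First, as you yourself concede, the index bookkeeping is the entire content here and you have not carried it out; in particular the piece of $v_t$ coming from the renormalization defect $(\alpha-\beta-1)\rho^{\alpha-\beta}\div\mathbf u$ must be handled carefully (a workable route is $\norm{\mathcal B(\rho^{\alpha-\beta}\div\mathbf u-\text{avg})}_{L^{q'}}\lesssim\norm{\rho^{\alpha-\beta}\div\mathbf u}_{L^p}$ with $p>1$ close to $1$ so that only $\norm{\rho^{\alpha-\beta}}_{L^{2+\epsilon}}$ appears and Gr\"onwall closes against $\int_0^T\norm{\nabla\mathbf u}_{L^2}^2\,dt$; a naive $L^4$ estimate does not close). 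Second, I do not believe the radial pointwise bound $\norm{\phi}_{L^\infty}\le C\norm{\rho^{\alpha-\beta}}_{L^2}$ is what makes $\beta=1$ reachable, as you claim. That bound forces the power $2(\alpha-\beta)$ onto $\rho$, which actually caps the iteration at $\alpha<2\beta$ once the increment is computed. The ordinary chain $\norm{v}_{L^q}\le C\norm{v}_{W^{1,p}}\le C\norm{\rho^{\alpha-\beta}}_{L^p}$ with $p\in(1,2)$ close to $1$, combined with $\norm{\rho}_{L^\gamma}\le C$ from the energy, pairs $v$ with a \emph{lower} $L^p$ norm of $\rho^{\alpha-\beta}$ and therefore gives a strictly larger admissible increment; this works on general bounded domains. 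So if the scheme closes, it is because of the endpoint index analysis around the $v_t$ term rather than the radial $L^\infty$ endpoint, and the heuristic in your last paragraph should be revisited; where exactly radial symmetry is essential (if at all) in this route would need to be pinpointed before the argument can be trusted at $\beta=1$.
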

\begin{proof}
Define $H(t)=\int_0^t G(R,s)ds$, there is also a structure similar to equation \eqref{Flow  structure}:
\begin{align}\label{FS2}
(\theta+\xi+H(t))_t+u_r\partial_r(\theta+\xi+H(t))+\int_R^r\frac{\rho }{s}(u_r^2-u_\theta^2)ds+P(r,t)-\bar{P}(t)=0.
\end{align}
Let $f=(\theta+\xi+H(t)-M)_+$, where $M$ is a sufficiently large constant to be determined. Multiplying \eqref{FS2} by $\rho f^{k\gamma-1}$, one obtains 
\begin{align}\label{Rho f 2gamma}
\begin{split}
\dfrac{1}{k\gamma}\dfrac{d}{dt}\int_\Omega \rho f^{k\gamma}dx&=\int_\Omega\rho f^{k\gamma-1}\dfrac{D}{Dt}fdx\\
&=\int_\Omega \rho f^{k\gamma-1}\left(\int_r^R\frac{\rho}{s}(u_r^2-u_\theta^2)ds-P(r,t)+\bar{P}\right)dx.
\end{split}
\end{align}
Next, we claim that under the condition $f>0$, there exists constants $C$ such that
\begin{align}\label{damping}
f^{\frac{\gamma}{\beta}}+\left(\frac{M}{2}\right)^\frac{\gamma}{\beta}\leq C\left(f+\frac{M}{2}\right)^{\frac{\gamma}{\beta}}=C\left(\theta+\xi+H(t)-\frac{M}{2}\right)^{\frac{\gamma}{\beta}}\leq C(P+|\xi|^{\frac{\gamma}{\beta}}),
\end{align}
where we choose $M$ sufficiently large such that $|H(t)|\leq \frac{M}{2}$ , using \eqref{H est}. In fact, observing $\theta+\xi$ is positive, we obtain
\begin{align*}
\left(\theta+\xi\right)^{\frac{\gamma}{\beta}}=\left(2\mu\log\rho+\frac{1}{\beta}\rho^\beta+\xi\right)^{\frac{\gamma}{\beta}}\leq C(\rho^\beta+|\xi|)^{\frac{\gamma}{\beta}}\leq C(P+|\xi|^{\frac{\gamma}{\beta}}).
\end{align*}
Substituting inequality $\eqref{damping}$ into $\eqref{Rho f 2gamma}$ yields
\begin{align}\label{Rho f 2gamma 2}
\begin{split}
&\quad\dfrac{1}{k\gamma}\dfrac{d}{dt}\int_\Omega \rho f^{k\gamma}dx+\int_\Omega \rho f^{k\gamma-1}\left(f^{\frac{\gamma}{\beta}}+\left(\frac{M}{2}\right)^\frac{\gamma}{\beta}\right)dx\\
&\leq C \int_\Omega\rho f^{k\gamma-1}\left(\int_r^R\frac{\rho}{s}u_r^2ds\right)dx+C\int_\Omega\rho f^{k\gamma-1}\bar{P}dx+C\int_\Omega\rho f^{k\gamma-1}|\xi|^{\frac{\gamma}{\beta}}dx\\
&=N_1+N_2+N_3.
\end{split}
\end{align}
By using H\"older inequality and Sobolev inequality, we get
\begin{align}\label{N1}
\begin{split}
N_1&\leq C\norm{\rho^{\frac{1}{k\gamma}}f}_{L^{k\gamma}}^{k\gamma-1}\norm{\rho}_{L^{k\beta\gamma+1}}^{\frac{1}{k\gamma}}\norm{\int_r^R\frac{\rho}{s}u_r^2ds}_{L^{\frac{k\beta\gamma+1}{\beta}}}\\
&\leq C\norm{\rho^{\frac{1}{k\gamma}}f}_{L^{k\gamma}}^{k\gamma-1}\norm{\rho}_{L^{k\beta\gamma+1}}^{\frac{1}{k\gamma}}\norm{\frac{\rho u_r^2}{r}}_{L^{\frac{2(k\beta\gamma+1)}{k\beta\gamma+1+2\beta}}}\\
&\leq C\norm{\rho^{\frac{1}{k\gamma}}f}_{L^{k\gamma}}^{k\gamma-1}\norm{\rho}_{L^{k\beta\gamma+1}}^{\frac{1}{k\gamma}}\norm{\rho u_r}_{L^{\frac{k\beta \gamma+1}{\beta}}}\norm{\frac{u_r}{r}}_{L^2}.
\end{split}
\end{align}
Note that 
\begin{align*}
\norm{\rho u_r}_{L^{\frac{k\beta\gamma+1}{\beta}}}&\leq \norm{\rho }_{L^{\frac{k\beta\gamma+1}{\beta}}} \norm{u_r}_{L^\infty}\leq C \norm{\rho}_{L^{k\beta\gamma+1}}\norm{\div \mathbf{u}}_{L^2},
\end{align*}
and
\begin{align*}
\norm{\frac{u_r}{r}}_{L^2}^2=2\pi\int_0^R\frac{u_r^2}{r^2}rdr\leq2\pi \int_0^R\left(\partial_r u_r+\frac{u_r}{r}\right)^2rdr=\int_\Omega|\div\mathbf{u}|^2dx.
\end{align*}
Therefore, we obtain the estimate for \(N_1\):
\begin{align*}
N_1\leq C\left(1+\int_\Omega\rho f^{k\gamma}dx+\int_\Omega \rho^{k\beta\gamma+1}dx\right)\norm{\div \mathbf{u}}_{L^2}^2.
\end{align*}
By using the equality $\eqref{energy estimate}$, we obtain
\begin{align}\label{N2}
N_2\leq C\int_\Omega \rho f^{k\gamma-1}\bar{P}dx\leq C\int_\Omega\rho f^{k\gamma-1}dx.
\end{align}
Finally, we estiamate \(N_3\) in two cases: $\frac{\gamma}{\beta}\leq 2$ and $\frac{\gamma}{\beta}>2$.

When $\frac{\gamma}{\beta}\leq 2$, if $\gamma>2$, then by the Sobolev embedding, \eqref{u infty} and \eqref{energy estimate} yields
\begin{align*}
\norm{\xi}_{L^\infty}^2&=\norm{\int_R^r\rho u_r ds}_{L^\infty}^2\leq C \norm{\rho u_r}_{L^{\gamma}}^2\leq C \norm{\rho}_{L^\gamma}^2\norm{\div\mathbf{u}}_{L^2}^2\leq C\norm{\div\mathbf{u}}_{L^2}^2.
\end{align*}
Then we estimate \(N_3\) as follows:
\begin{align}\label{N3 0}
\begin{split}
N_3&\leq C\int_\Omega\rho f^{k\gamma-1}(1+|\xi|^2)dx\\
&\leq C(1+\norm{\xi}_{L^\infty}^2)\int_\Omega \rho f^{k\gamma-1}dx\\
&\leq C(1+\norm{\div\mathbf{u}}_{L^2}^2)\int_\Omega \rho f^{k\gamma-1}dx.
\end{split}
\end{align}
If $\gamma\leq 2$, we choose \(\varepsilon\) sufficiently small such that 
\begin{align*}
\norm{\xi}_{L^\infty}^2&=\norm{\int_R^r\rho u_r ds}_{L^\infty}^2\leq C\norm{\rho u_r}_{L^{2+\varepsilon}}^2\leq C\norm{\rho}_{L^\gamma}^{2-2\theta}\norm{\rho}_{L^{k\beta\gamma+1}}^{2\theta}\norm{\div\mathbf{u}}_{L^2}^2,
\end{align*}
where
\begin{align*}
\theta=\frac{k\beta\gamma+1}{k\beta\gamma+1-\gamma}\left(1-\frac{\gamma}{2+\varepsilon}\right).
\end{align*}
Therefore, we estimate \(N_3\) as follows:
\begin{align}\label{N3 1}
\begin{split}
N_3&\leq C\int_\Omega\rho f^{k\gamma-1}(1+|\xi|^2)dx\\
&\leq C(1+\norm{\xi}_{L^\infty}^2)\int_\Omega \rho f^{k\gamma-1}dx\\
&\leq C(1+\norm{\rho}_{L^{k\beta\gamma+1}}^{2\theta}\norm{\div\mathbf{u}}_{L^2}^2)\int_\Omega \rho f^{k\gamma-1}dx\\
&\leq C\norm{\rho}_{L^{k\beta\gamma+1}}^{2\theta}\norm{\div\mathbf{u}}_{L^2}^2\left(\int_\Omega \rho f^{k\gamma}dx\right)^{\frac{k\gamma-1}{k\gamma}}+C\int_\Omega \rho f^{k\gamma-1}dx\\
&\leq C\left(1+\int_\Omega\rho f^{k\gamma}dx+\int_\Omega \rho^{k\beta\gamma+1}dx\right)\norm{\div\mathbf{u}}_{L^2}^2+C\int_\Omega \rho f^{k\gamma-1}dx,
\end{split}
\end{align}
where we used 
\begin{align*}
\beta\ge 1,\quad \gamma>1,
\end{align*}
and 
\begin{align*}
\frac{k\gamma-1}{k\gamma}+\frac{2}{k\beta\gamma+1-\gamma}\left(1-\frac{\gamma}{2+\varepsilon}\right)\leq 1,
\end{align*}
which is equivalent to
\begin{align*}
\beta\ge-\frac{2\gamma}{2+\varepsilon}+\left(2+\frac{1}{k}\right)-\frac{1}{k\gamma}.
\end{align*}

When $\frac{\gamma}{\beta}>2$, we choose \(\varepsilon\) sufficiently small such that
\begin{align*}
\norm{\xi}_{L^\infty}&=\norm{\int_R^r\rho u_r ds}_{L^\infty}\leq C\norm{\rho u_r}_{L^{2+\varepsilon}}\leq C\norm{\sqrt{\rho}u_r}_{L^2}^{1-\eta}\norm{\sqrt{\rho}}_{L^{2\gamma}}^{1+\eta}\norm{u_r}_{L^\infty}^\eta,
\end{align*}
where 
\begin{align*}
\eta=\frac{1}{\gamma-1}\left(\gamma+1-\frac{2\gamma}{2+\varepsilon}\right)\in(0,1).
\end{align*}
At this point, we estimate \(N_3\) as follows:
\begin{align}\label{N3 2}
\begin{split}
N_3&\leq C\int_\Omega\rho f^{k\gamma-1}|\xi|^{\frac{\gamma}{\beta}}dx\leq C\norm{\xi}_{L^\infty}^{\frac{\gamma}{\beta}}\int_\Omega \rho f^{k\gamma-1}dx\leq C\norm{\div\mathbf{u}}_{L^2}^{\frac{\eta\gamma}{\beta}}\int_\Omega \rho f^{k\gamma-1}dx.
\end{split}
\end{align}
Let $\frac{\eta\gamma}{\beta}=2$, then 
\begin{align*}
\frac{2\beta}{\gamma}=\frac{1}{\gamma-1}\left(\gamma+1-\frac{2\gamma}{2+\varepsilon}\right),
\end{align*}
we point out that such an \(\varepsilon\) exists, due to our assumptions 
\begin{align*}
\beta \ge 1,\quad \gamma>2.
\end{align*}

Substituting the estimates for \(N_1\) to \(N_3\) back into $\eqref{Rho f 2gamma 2}$ gives
\begin{align*}
\begin{split}
&\quad\frac{1}{k\gamma}\dfrac{d}{dt}\int_\Omega \rho f^{k\gamma}dx+\int_\Omega \rho f^{k\gamma-1}\left(f^{\frac{\gamma}{\beta}}+\left(\frac{M}{2}\right)^\frac{\gamma}{\beta}\right)dx\\
&\leq C\left(1+\int_\Omega\rho f^{k\gamma}dx+\int_\Omega \rho^{k\beta\gamma+1}dx\right)\norm{\div\mathbf{u}}_{L^2}^2+C\int_\Omega \rho f^{k\gamma-1}dx.
\end{split}
\end{align*}
Let $M$ be sufficiently large such that 
\begin{align*}
C\int_\Omega \rho f^{k\gamma-1}dx\leq \frac{1}{2}\int_\Omega\rho f^{k\gamma-1}\left(\frac{M}{2}\right)^{\frac{\gamma}{\beta}}dx,
\end{align*}
therefore, we get
\begin{align}\label{Rho f2gamma}
\begin{split}
&\quad\frac{1}{k\gamma}\dfrac{d}{dt}\int_\Omega \rho f^{k\gamma}dx+\frac{1}{2}\int_\Omega \rho f^{k\gamma-1}\left(f^{\frac{\gamma}{\beta}}+\left(\frac{M}{2}\right)^\frac{\gamma}{\beta}\right)dx\\
&\leq C\left(1+\int_\Omega\rho f^{k\gamma}dx+\int_\Omega \rho^{k\beta\gamma+1}dx\right)\norm{\div\mathbf{u}}_{L^2}^2.
\end{split}
\end{align}

Finally, by using $\eqref{H est}$, H\"{o}lder inequality and Sobolev embedding, we estimate $\int_\Omega\rho^{k\beta\gamma+1}dx$ as follows:
\begin{align*}
\begin{split}
\int_\Omega\rho^{k\beta\gamma+1}dx&=\int_{\Omega\cap(\rho<2)}\rho^{k\beta\gamma+1}dx+\int_{\Omega\cap(\rho>2)}\rho^{k\beta\gamma+1}dx\\
&\leq C+C\int_{\Omega}\rho f^{k\gamma}dx+C\int_{\Omega}\rho|\xi|^{k\gamma}dx\\
&\leq C+C\int_{\Omega}\rho f^{k\gamma}dx+C\norm{\rho}_{L^{\frac{2(k\beta\gamma+1)}{2k\beta\gamma+2-k\gamma}}}\norm{\xi}_{L^{2(k\beta\gamma+1)}}^{k\gamma}\\
&\leq C+C\int_{\Omega}\rho f^{k\gamma}dx+C\norm{\rho}_{L^{\frac{2(k\beta\gamma+1)}{2k\beta\gamma+2-k\gamma}}}\norm{\rho u_r}_{L^{\frac{2(k\beta\gamma+1)}{k\beta\gamma+2}}}^{k\gamma}\\
&\leq C+C\int_{\Omega}\rho f^{k\gamma}dx+C\norm{\rho}_{L^{k\beta\gamma+1}}\norm{\rho^{\frac{1}{2}}}_{L^{2(k\beta\gamma+1)}}^{k\gamma}\norm{\rho^{\frac{1}{2}}u_r}_{L^2}^{k\gamma}\\
&\leq C+C\int_{\Omega}\rho f^{k\gamma}dx+C\norm{\rho}_{L^{k\beta\gamma+1}}^{\frac{k\gamma}{2}+1}.
\end{split}
\end{align*}
The Young's inequality shows that
\begin{align}\label{Rho 2bg+1}
\int_\Omega\rho^{k\beta \gamma+1}dx\leq C+C\int_{\Omega}\rho f^{k\gamma}dx.
\end{align}
Substituting \eqref{Rho 2bg+1} into \eqref{Rho f2gamma}, we obtain
\begin{align}\label{Dissipation}
\begin{split}
&\frac{1}{k\gamma}\dfrac{d}{dt}\int_\Omega \rho f^{k\gamma}dx+\frac{1}{2}\int_\Omega \rho f^{k\gamma-1}\left(f^{\frac{\gamma}{\beta}}+\left(\frac{M}{2}\right)^\frac{\gamma}{\beta}\right)dx\leq C\left(1+\int_\Omega\rho f^{k\gamma}dx\right)\norm{\div\mathbf{u}}_{L^2}^2.
\end{split}
\end{align}
We complete the proof by using the Gronwall inequality and \eqref{Rho 2bg+1}.
\end{proof}
The Proposition \ref{new potential} essentially establishes a new potential under the condition $1\leq \beta\leq \gamma$. Therefore, naturally, we can obtain the following Proposition similar to Proposition \ref{suplog}.
\begin{prop}\label{new log}
Suppose that $1\leq \beta\leq\gamma$. For any $\alpha\in(0,1)$, there exists a constant $C>0$ depending only on $\alpha,\mu,\beta,\gamma,R,\norm{\rho_0}_{L^\infty},\norm{\mathbf{u_0}}_{H^1}$ such that 
\begin{align}
\sup_{0\le t\le T}\log(e+A_1^2(t)+A_3^2(t))+\int_0^T\frac{A_2^2(t)}{e+A_1^2(t)}dt\leq CR_T^{1+\alpha\beta}.
\end{align}
\end{prop}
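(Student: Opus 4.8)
\textbf{Proof proposal for Proposition \ref{new log}.}

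The plan is to rerun the computation of Proposition \ref{suplog}, but now exploiting the improved density integrability supplied by Proposition \ref{new potential} in place of the crude bound $\norm{\rho}_{L^p}\le CR_T^{\text{(something)}}$ that forced the exponent $\kappa=\max\{0,\gamma-2\beta,\beta-\gamma-2\}$ in \eqref{the first structure}. Concretely, I would start again from the differential identity \eqref{I_0} for $\frac12\frac{d}{dt}A_1^2+A_2^2$, and re-estimate $I_1$ through $I_7$. The terms $I_1$, $I_6$, $I_7$ already produced the clean $R_T^{1+\alpha\beta}$-type contributions and need no change; the problematic terms were $I_2+I_3$, $I_4$, $I_5$ and the pieces controlling $\norm{\nabla\mathbf u}_{L^2}$ and $|\bar G|$, all of which involved $\norm{P}$, $\norm{\rho^\beta}$ or $\norm{\sqrt\lambda}_{L^2}$ in $L^q$ for some $q$ beyond $L^1$. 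Since $1\le\beta\le\gamma$ and Proposition \ref{new potential} gives $\sup_t\int_\Omega\rho^{k\beta\gamma+1}dx\le C$ for \emph{every} $k\ge1$, we may choose $k$ large enough that all the relevant $L^q$ norms of $P=\rho^\gamma$ and of $\rho^\beta$ are bounded by a time-independent constant $C$. This kills every stray power of $R_T$ in \eqref{I_2+I_3}, \eqref{I_4}, \eqref{I_5}, \eqref{I_6} and in the bound for $|\bar G|$, leaving only the genuinely $R_T$-dependent factor $R_T^{\alpha\beta/2}$ coming from $\norm{G^2/(2\mu+\lambda)}$ type terms and from $\norm{\sqrt{2\mu+\lambda}}_{L^\infty}\le CR_T^{\beta/2}$-type estimates.

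More precisely, I would redo \eqref{I_2+I_3} as $|I_2|+|I_3|\le C\norm{(P+1)(2\mu+\lambda)^{-3/2}}_{L^{p_1}}\norm{G}_{L^{p_2}}\norm{\sqrt{2\mu+\lambda}\div\mathbf u}_{L^2}$ with $1/p_1+1/p_2=1/2$, now choosing $p_1$ finite but large; since $P(2\mu+\lambda)^{-3/2}\le C\rho^{\gamma}$ (using $\mu>0$) and $\int\rho^{p_1\gamma}\le\int\rho^{k\beta\gamma+1}\le C$ for $p_1\le(k\beta\gamma+1)/\gamma$, this term becomes $\le C\norm{G}_{H^1}A_3$. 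The interpolation $\norm{G}_{L^{p_2}}\le C\norm{G}_{L^2}^{2/p_2}\norm{G}_{H^1}^{1-2/p_2}$ costs only a harmless factor. Similarly \eqref{I_4} uses $\norm{P-\bar P}_{L^2}\le C$ directly, and in the bound for $\norm{\nabla\mathbf u}_{L^2}$ one writes $\norm{(P-\bar P)/(2\mu+\lambda)}_{L^2}\le C\norm{\rho^{\gamma-\beta}}_{L^2}\le C$ since $\gamma-\beta\ge0$ and $\rho$ has all polynomial moments bounded; likewise $|\bar G|=|\int\lambda\div\mathbf u\,dx|\le\norm{\rho^{\beta/2}}_{L^2}A_3\le CA_3$. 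Assembling everything, \eqref{new I_0} is replaced by
\begin{align*}
\frac{d}{dt}A_1^2+2A_2^2\le A_2^2+CR_T^{1+\alpha\beta}A_1^2A_3^2+CR_T\,A_3^2,
\end{align*}
and the relation \eqref{A_1A_3} improves to $A_3^2\le CA_1^2+C$. Dividing by $e+A_1^2$, using $A_3^2\le C(e+A_1^2)$, and applying the bound $\int_0^T A_3^2\,dt\le\int_0^T\norm{\sqrt{2\mu+\lambda}\div\mathbf u}_{L^2}^2+\norm{w}_{L^2}^2\,dt\le C$ from the energy equality \eqref{energy estimate}, one integrates in time to get $\sup_{0\le t\le T}\log(e+A_1^2)+\int_0^T A_2^2/(e+A_1^2)\,dt\le CR_T^{1+\alpha\beta}$, and then $A_3^2\le CA_1^2+C$ upgrades this to the stated estimate including $A_3^2$ inside the logarithm.

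The main obstacle is bookkeeping rather than conceptual: one must verify that for each of the terms $I_2$--$I_6$ and the auxiliary estimates there is \emph{some} admissible choice of Hölder exponents for which the exponent on $\rho$ stays below $k\beta\gamma+1$ for $k$ chosen large, and that the residual powers of $R_T$ never exceed $R_T^{1+\alpha\beta}$ after the Young-inequality splitting that buys the $\varepsilon R_T^{-1}\norm{G}_{H^1}^2$ term absorbed into $A_2^2$ via \eqref{G,w}. The delicate point is that the factor $R_T^{\beta/2}$ from $\norm{\sqrt{2\mu+\lambda}}_{L^\infty}$, when it multiplies $\norm{G}_{L^{\infty}\text{-ish}}$ interpolated norms, must be balanced so that after Cauchy–Schwarz the coefficient of $A_1^2A_3^2$ is exactly $R_T^{1+\alpha\beta}$ and nothing worse; this is precisely where the freedom in $\alpha\in(0,1)$ is spent, exactly as in Proposition \ref{suplog}. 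Everything else is a direct transcription of the earlier argument with $R_T^{\kappa}$-factors set to $1$.
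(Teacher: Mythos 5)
Your proposal is correct and follows essentially the same route as the paper's own proof: rerun the $I_1$--$I_7$ decomposition from Proposition \ref{suplog}, use Proposition \ref{new potential} to replace the $L^\infty$-type density bounds with time-independent $L^q$ moment bounds, thereby removing the $R_T^{\kappa}$ factors, and then divide by $e+A_1^2$ and integrate using $\int_0^T A_3^2\,dt\le C$.

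A couple of minor bookkeeping slips worth flagging, though neither affects correctness: $I_4$ was \emph{not} actually problematic in Proposition \ref{suplog} --- the bound $|I_4|\le C\norm{G}_{L^2}A_3+CA_3^2$ there already carries no stray power of $R_T$, and the paper reuses it verbatim --- while $I_6$ \emph{does} change, since the $CR_T^{\max\{\gamma/2-\beta,0\}}\norm{\nabla G}_{L^2}A_3$ piece from \eqref{I_6} improves to a time-independent constant once the new potential is used. Your H\"older split for $I_2+I_3$ (large $p_1$, $p_2$ near $2$) is the mirror image of the paper's ($p_1=2+\varepsilon$, $p_2$ large), and your Young split for $I_6$ avoids the $CR_TA_3^4$ term that appears in the paper's \eqref{2new I_0}; both variants land on the same $CR_T^{1+\alpha\beta}A_3^2$ after division by $e+A_1^2$, so the conclusion is unchanged.
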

\begin{proof}
The proof is similar to that of Proposition \ref{suplog}, but for completeness, we still provide some key steps of the proof.
Multiplying equation \eqref{RME2} by $\dot{\mathbf{u}}$ and integrating by parts gives
\begin{align}\label{2I_0}
\begin{split}
\frac{1}{2}\frac{d}{dt}A_1^2+A_2^2&=\frac{1}{2}\int_\Omega \frac{2\mu+\lambda-\beta\lambda}{(2\mu+\lambda)^2}G^2\div\mathbf{u}dx-\int_\Omega \frac{\beta \lambda(P-\bar{P})}{(2\mu+\lambda)^2}G\div \mathbf{u}dx+\int_\Omega\frac{\gamma P}{2\mu+\lambda}G\div \mathbf{u}dx\\
&\quad -(\gamma-1)\int_\Omega\frac{G}{2\mu+\lambda}dx\int_\Omega P\div \mathbf{u}dx-\int_\Omega G(\div \mathbf{u})^2dx\\
&\quad+2\int_\Omega G\nabla u_1\cdot\nabla^\perp u_2dx-\frac{\mu}{2}\int_\Omega w^2\div \mathbf{u}dx\\
&=\sum_{i=1}^{7}I_i.
\end{split}
\end{align}
The estimates for \(I_1\), \(I_4\) and \(I_7\)  are the same as in Proposition \ref{suplog}.
For \(I_2\) and \(I_3\), we estimate as follows:
\begin{align}\label{2I_2+I_3}
\begin{split}
|I_2|+|I_3|&\leq C\int_\Omega \frac{P+1}{2\mu+\lambda}|G||\div \mathbf{u}|dx\\
&\leq C\norm{\frac{P+1}{(2\mu+\lambda)^{\frac{3}{2}}}}_{L^{2+\varepsilon}}\norm{G}_{L^{\frac{2(2+\varepsilon)}{\varepsilon}}}\norm{\sqrt{2\mu+\lambda}\div\mathbf{u}}_{L^2}\\
&\leq C\norm{G}_{H^1}A_3,
\end{split}
\end{align}
where we used \eqref{rho High}. Similarly,
\begin{align}\label{2I_5}
\begin{split}
|I_5|&\leq \int_\Omega \frac{G^2}{2\mu+\lambda}|\div\mathbf{u}|dx+\int_\Omega\frac{|P-\bar{P}|}{2\mu+\lambda}|G||\div\mathbf{u}|dx\\
&\leq C A_1A_3R_T^{\frac{\alpha\beta}{2}}\norm{G}_{H^1}+C\norm{G}_{H^1}A_3,
\end{split}
\end{align}
and 
\begin{align}\label{2I_6}
|I_6|\leq C\norm{\nabla G}_{L^2}\norm{\nabla \mathbf{u}}_{L^2}^2\leq C\norm{\nabla G}_{L^2}A_3^2.
\end{align}
Substituting \eqref{I_1},\eqref{2I_2+I_3},\eqref{I_4},\eqref{2I_5},\eqref{2I_6} and \eqref{I_7} into \eqref{2I_0} gives
\begin{align}\label{2new I_0}
\begin{split}
&\quad\frac{d}{dt}A_1^2+2A_2^2\\
&\leq C (R_T^{\frac{\alpha\beta}{2}}\norm{G}_{H^1}+\norm{w}_{H^1})A_1A_3+C\norm{G}_{H^1}A_3+CA_3^2+C\norm{\nabla G}_{L^2}A_3^2\\
&\leq \varepsilon R_T^{-1}(\norm{G}_{H^1}^2+\norm{w}_{H^1}^2)+CR_T^{1+\alpha\beta}A_1^2A_3^2+CR_TA_3^2+CR_TA_3^4\\
&\leq A_2^2+CR_T^{1+\alpha\beta}A_1^2A_3^2+CR_TA_3^2+CR_TA_3^4,
\end{split}
\end{align}
where we used
\begin{align*}
|\bar{G}|=\left|\int_\Omega\lambda\div\mathbf{u}dx\right|\leq \norm{\sqrt{\lambda}}_{L^2}A_3\leq CA_3.
\end{align*}
Note the following relationship between \(A_1\) and \(A_3\),
\begin{align}\label{2A_1A_3}
\begin{split}
A_3^2&\leq C\norm{\sqrt{2\mu+\lambda}\div\mathbf{u}}_{L^2}^2+C\norm{w}_{L^2}^2\\
&\leq C\norm{\frac{G}{\sqrt{2\mu+\lambda}}}_{L^2}^2+C\norm{\frac{P-\bar{P}}{\sqrt{2\mu+\lambda}}}_{L^2}^2+C\norm{w}_{L^2}^2\\
&\leq CA_1^2+C.
\end{split}
\end{align}
Dividing equation \eqref{2new I_0} by $e+A_1^2$ implies that
\begin{align*}
\frac{d}{dt}\log(e+A_1^2)+\frac{A_2^2}{e+A_1^2}\leq CR_T^{1+\alpha\beta}A_3^2.
\end{align*}
Integrating with respect to time gives
\begin{align}\label{2A_1}
\sup_{0\le t\le T}\log(e+A_1^2)+\int_0^T\frac{A_2^2}{e+A_1^2}dt\leq CR_T^{1+\alpha\beta}.
\end{align}
Using \eqref{2A_1A_3} again, we complete the proof.
\end{proof}
Having obtained the above Proposition, we can immediately prove the following Proposition using the method in Proposition \ref{uniform bound1}, simply by replacing \(\kappa\) with \(0\) in inequality \eqref{Zlotnik bound}. Therefore, we omit the proof.
\begin{prop}\label{new upper bound}
Let $1<\beta\leq \gamma$, there exists a constant $C$ depending only on $\mu,\beta,\gamma,R$ and initial data, but not on $T$, such that for any $T>0$, 
\begin{align}\label{uniform bound2}
\sup_{0\le t\le T}\norm{\rho (t)}_{L^\infty}\leq C.
\end{align}
\end{prop}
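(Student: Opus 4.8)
The plan is to run the trajectory/Zlotnik argument of Proposition~\ref{uniform bound1} almost verbatim, replacing its two main ingredients by the sharper ones now available for $1<\beta\le\gamma$: the time-uniform boundary-flux bound of Proposition~\ref{G boundary} and the growth estimate $\log(e+A_1^2+A_3^2)\le CR_T^{1+\alpha\beta}$ of Proposition~\ref{new log} (that is, $\kappa=0$). Concretely, fix a characteristic $x(t)$ solving $\dot x(t)=u_r(x(t),t)$ and set $y(t)=\theta(\rho(t,x(t)))$, $g(y)=-P(\theta^{-1}(y))$, and
\[
h(t)=-\xi(t,x(t))+\int_0^t\!\!\int_r^R\frac{\rho}{s}(u_r^2-u_\theta^2)\,ds\,d\tau+\int_0^t\bar P(\tau)\,d\tau-\int_0^t G(R,\tau)\,d\tau,
\]
so that the renormalized transport identity \eqref{Flow  structure} reads $y'(t)=g(y(t))+h'(t)$ along $x(t)$. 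Since $\theta'(\rho)=2\mu/\rho+\rho^{\beta-1}>0$, $\theta$ is a strictly increasing bijection of $(0,\infty)$ onto $\R$ with $\theta^{-1}(y)\sim(\beta y)^{1/\beta}$ as $y\to+\infty$, hence $g(\infty)=-\infty$; it remains to verify the increment hypothesis of Lemma~\ref{Zlotnik} with $N_1$ \emph{independent of} $R_T$.

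Next I would estimate $|h(t_2)-h(t_1)|\le J_1+\cdots+J_4$ as in \eqref{h estimate}. For $J_1=2\norm{\xi}_{L^\infty L^\infty}$ I use the logarithmic interpolation bound $\norm{\xi}_{L^\infty}\le C\norm{\rho\mathbf u}_{L^2}\log^{1/2}(e+\norm{\rho\mathbf u}_{L^q})+C\norm{\rho\mathbf u}_{L^{2\gamma/(\gamma+1)}}+C$ together with $\norm{\rho\mathbf u}_{L^2}\le CR_T^{1/2}$, $\norm{\rho\mathbf u}_{L^{2\gamma/(\gamma+1)}}\le C$ and $\norm{\rho\mathbf u}_{L^q}\le CR_T\norm{\nabla\mathbf u}_{L^2}$ (all from \eqref{energy estimate} and the 2D Sobolev embedding), and crucially the bound $\log(e+\norm{\nabla\mathbf u}_{L^2})\le CR_T^{1+\alpha\beta}$ coming from Proposition~\ref{new log} via $A_3^2\ge\mu\norm{\nabla\mathbf u}_{L^2}^2$; this yields $J_1\le CR_T^{1+\alpha\beta/2}$, which beats the exponent $1+\kappa/2+\alpha\beta/2$ of Proposition~\ref{uniform bound1}. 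The terms $J_2\le CR_T\int_0^T\norm{\nabla\mathbf u}_{L^2}^2\,d\tau\le CR_T$ (using $\int_0^R s^{-1}(u_r^2+u_\theta^2)\,ds\le C\norm{\nabla\mathbf u}_{L^2}^2$) and $J_3\le C(t_2-t_1)$ are handled exactly as in Proposition~\ref{uniform bound1} via the energy equality \eqref{energy estimate}. For $J_4=\bigl|\int_{t_1}^{t_2}G(R,\tau)\,d\tau\bigr|$ I simply invoke Proposition~\ref{G boundary}, which gives $J_4\le 2C$ with $C$ independent of $T$ --- this is precisely where the hypothesis $\beta\le\gamma$ enters, and it removes the $R_T^{\beta-\gamma}$ term present in Proposition~\ref{uniform bound1}. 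Since $R_T\ge1$, altogether $h(t_2)-h(t_1)\le N_0+N_1(t_2-t_1)$ with $N_0=CR_T^{1+\alpha\beta/2}$ and $N_1=C$.

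Now apply Lemma~\ref{Zlotnik}. Because $N_1$ is independent of $R_T$, the threshold $\tilde\zeta$ for which $g(\zeta)\le-N_1$ whenever $\zeta\ge\tilde\zeta$ is a fixed constant, so along every characteristic
\[
\theta(\rho(t,x(t)))\le\max\{\theta(\norm{\rho_0}_{L^\infty}),\tilde\zeta\}+CR_T^{1+\alpha\beta/2}.
\]
Taking the supremum over characteristics and over $t\in[0,T]$, and using the monotonicity of $\theta$ together with $\theta(\rho)\ge\frac1\beta\rho^\beta$ for $\rho\ge1$, one obtains $R_T^\beta\le C+CR_T^{1+\alpha\beta/2}$. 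Since $\beta>1$, choosing $\alpha\in(0,1)$ small enough that $1+\alpha\beta/2<\beta$ allows the superlinear term to be absorbed, giving $R_T\le C$ with $C$ independent of $T$, which is \eqref{uniform bound2}.

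I do not anticipate a genuine obstacle at this step: the substantive new work --- the time-uniform control $\bigl|\int_0^tG(R,\tau)\,d\tau\bigr|\le C$ (Proposition~\ref{G boundary}) and the removal of the factor $R_T^\kappa$ from the growth of $\log(e+\norm{\nabla\mathbf u}_{L^2})$, which rests on the time-uniform higher integrability $\sup_t\int_\Omega\rho^{k\beta\gamma+1}\,dx\le C$ (Propositions~\ref{new potential} and \ref{new log}) --- has already been carried out. The only point requiring care is the bookkeeping of exponents, and the Zlotnik argument closes exactly when the final power $1+\alpha\beta/2$ of $R_T$ is strictly below $\beta$, i.e.\ when $\beta>1$; thus the two standing hypotheses play complementary roles, $\beta\le\gamma$ supplying the inputs of Propositions~\ref{G boundary}--\ref{new log} and $\beta>1$ closing the exponent inequality.
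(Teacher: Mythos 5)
Your proposal is correct and follows essentially the same route the paper intends: the paper omits the proof with the remark that one simply reruns Proposition~\ref{uniform bound1} with $\kappa$ replaced by $0$ (thanks to Proposition~\ref{new log}) and, since $1<\beta\le\gamma$, the $R_T^{\beta-\gamma}$ contribution is already harmless (you get the same conclusion by instead invoking Proposition~\ref{G boundary} for $J_4$, a minor cosmetic difference). Your bookkeeping — the exponent $1+\alpha\beta/2$, the Zlotnik threshold, and the absorption under $1+\alpha\beta/2<\beta$ which uses $\beta>1$ — matches the intended argument.
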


\subsection{Uniform upper bound of density for $\beta=1$}
In this subsection, we will obtain a uniform upper bound for the density under the condition \( \beta = 1 \), which is entirely different from the case where \( \beta > 1 \). 

We first prove the time-independent spatial higher integrability of the density field, which is a natural corollary of Proposition \ref{new potential}.

\begin{prop}
Suppose that $\beta=1, k\gamma\ge 3$, then there exists a sufficiently large constant \(M_1\) and \(C\), depending on $\mu,\beta,\gamma,R,k$ and initial data, but not on time $T$, such that
\begin{align}\label{uniform spatial}
\int_0^T\int_\Omega (\rho-M_1)_+^{k\gamma-1}dxdt\leq C.
\end{align}
\end{prop}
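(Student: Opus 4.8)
The plan is to read off the required time‑integrated estimate from the dissipation that is already present on the left of \eqref{Dissipation}, specialised to $\beta=1$ (so $\gamma/\beta=\gamma$), and then to compare $(\rho-M_1)_+^{k\gamma-1}$ with $\rho f^{k\gamma-1}$, where $f=(\theta+\xi+H(t)-M)_+$, $H(t)=\int_0^t G(R,s)\,ds$, and $M$ is the large constant fixed in the proof of Proposition \ref{new potential} so that $|H(t)|\le M/2$ for all $t$ (legitimate here since $\beta=1\le\gamma$, whence \eqref{H est} applies); we also take $M\ge1$. Since $k\gamma\ge 3$ we may invoke Proposition \ref{new potential} with this $k$ and $\beta=1$, which supplies both $\sup_{0\le t\le T}\int_\Omega\rho f^{k\gamma}\,dx\le C$ (from its Gronwall step) and, via \eqref{rho High}, $\sup_{0\le t\le T}\int_\Omega\rho^{\,k\gamma+1}\,dx\le C$. \emph{Step 1 (integrate the dissipation).} Specialising \eqref{Dissipation} to $\beta=1$ and integrating on $[0,T]$, discard the nonnegative final‑time term $\tfrac1{k\gamma}\int_\Omega\rho f^{k\gamma}\,dx\big|_{t=T}$ and use the two facts just quoted together with $\int_0^T\norm{\div\mathbf u}_{L^2}^2\,dt\le C$ from the energy equality \eqref{energy estimate}. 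Since the dissipation contains $\tfrac12(M/2)^{\gamma}\int_\Omega\rho f^{k\gamma-1}\,dx$, this yields the central bound
\[
\int_0^T\!\!\int_\Omega\rho\, f^{\,k\gamma-1}\,dx\,dt\ \le\ C .
\]

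\emph{Step 2 (a time‑integrable $L^\infty$ bound for $\xi$, and the comparison).} From the proof of Lemma \ref{u inf} one has $\norm{u_r(t)}_{L^\infty}\le(2\pi)^{-1/2}\norm{\div\mathbf u(t)}_{L^2}$, while Hölder in the radial variable gives $\int_0^R\rho\,ds\le\big(\tfrac1{2\pi}\int_\Omega\rho^{k\gamma+1}\,dx\big)^{\frac1{k\gamma+1}}\big(\int_0^R s^{-1/(k\gamma)}\,ds\big)^{\frac{k\gamma}{k\gamma+1}}\le C$ (the last integral is finite because $k\gamma\ge3$, the first factor bounded by \eqref{rho High}); hence $\norm{\xi(t)}_{L^\infty}\le C_\ast\norm{\div\mathbf u(t)}_{L^2}$ and $\int_0^T\norm{\xi(t)}_{L^\infty}^2\,dt\le C$. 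Now fix $M_1:=6M$ and split $[0,T]=S\cup S^{c}$ with $S=\{t:\norm{\xi(t)}_{L^\infty}>M_1/4\}$. For $t\in S^{c}$, at a point where $\rho>M_1$ we have $\theta(\rho)=2\mu\log\rho+\rho\ge\rho$, so
\[
f\ \ge\ \theta+\xi+H-M\ \ge\ \rho-\norm{\xi}_{L^\infty}-|H|-M\ \ge\ \rho-\tfrac{M_1}{4}-\tfrac{3M}{2}\ >\ \tfrac{\rho}{2}\ >\ 0 ,
\]
whence $(\rho-M_1)_+<\rho<2f$ and, using $\rho>1$, $(\rho-M_1)_+^{k\gamma-1}\le 2^{k\gamma-1}\rho f^{k\gamma-1}$; integrating and invoking Step 1 gives $\int_{S^{c}}\!\int_\Omega(\rho-M_1)_+^{k\gamma-1}\,dx\,dt\le 2^{k\gamma-1}\!\int_0^T\!\int_\Omega\rho f^{k\gamma-1}\,dx\,dt\le C$. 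For $t\in S$ we have $\norm{\div\mathbf u(t)}_{L^2}>M_1/(4C_\ast)$, so by Chebyshev and \eqref{energy estimate} $|S|\le C/M_1^2$; and for every $t$, Hölder together with \eqref{rho High} gives $\int_\Omega(\rho-M_1)_+^{k\gamma-1}\,dx\le\big(\int_\Omega\rho^{k\gamma+1}\,dx\big)^{\frac{k\gamma-1}{k\gamma+1}}|\Omega|^{\frac2{k\gamma+1}}\le C$. Hence $\int_S\int_\Omega(\rho-M_1)_+^{k\gamma-1}\,dx\,dt\le C|S|\le C$, and adding the two pieces proves \eqref{uniform spatial}.

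\emph{Where the work is.} Everything except the second half of Step 2 is immediate bookkeeping on what Proposition \ref{new potential} and the energy estimate already provide. The only genuine point is that the dissipation $\int_0^T\!\int_\Omega\rho f^{k\gamma-1}$ controls $(\rho-M_1)_+^{k\gamma-1}$ only where $\theta$ dominates $\xi$; at the instants when $\norm{\xi(t)}_{L^\infty}$ is large this comparison degenerates, and the naive estimate $\int_0^T\norm{\xi}_{L^\infty}^{k\gamma-1}\,dt$ is not available once $k\gamma-1>2$. The remedy is the observation that such instants form a time set of measure $O(M_1^{-2})$ — since $\norm{\xi(t)}_{L^\infty}\lesssim\norm{\div\mathbf u(t)}_{L^2}$ with $\div\mathbf u\in L^2_tL^2_x$ uniformly in $T$ — on which the uniform‑in‑time bound \eqref{rho High} already suffices. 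I expect this bad‑time splitting to be the crux; the rest is routine.
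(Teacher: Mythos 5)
Your proof is correct, and the overall shape (Step~1 giving $\int_0^T\!\int_\Omega\rho f^{k\gamma-1}\le C$ from \eqref{Dissipation}, then controlling the contribution from $\xi$) mirrors the paper's; but your Step~2 takes a genuinely different route. The paper uses the pointwise inequality $(\rho-M_1)_+\le f+|\xi|$ (valid with $M_1\ge\tfrac32M$), raises it to the power $k\gamma-1$, and estimates the residual $\int_0^T\!\int_\Omega\rho|\xi|^{k\gamma-1}\,dx\,dt$ head-on by a chain of H\"older and Sobolev inequalities: $\int_\Omega\rho|\xi|^{k\gamma-1}\,dx\le \norm{\rho}_{L^\gamma}\norm{\xi}_{L^{\gamma(k\gamma-1)/(\gamma-1)}}^{k\gamma-1}$, then $\norm{\xi}_{L^q}\lesssim\norm{\rho u_r}_{L^p}$ by the radial Sobolev embedding, then the trilinear splitting $\rho u_r=u_r^{2/(k\gamma-1)}(\sqrt\rho u_r)^{(k\gamma-3)/(k\gamma-1)}(\sqrt\rho)^{(k\gamma+1)/(k\gamma-1)}$ in $L^{k\gamma-1}_tL^p_x$ so that the $L^2_tL^\infty_x$ control of $u_r$, the $L^\infty_tL^2_x$ control of $\sqrt\rho\,u_r$, and \eqref{rho High} each absorb one factor. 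You instead replace this bookkeeping by a bad-time decomposition: using the cleaner bound $\norm{\xi(t)}_{L^\infty}\le C_\ast\norm{\div\mathbf u(t)}_{L^2}$ (itself a consequence of $\norm{u_r}_{L^\infty}\lesssim\norm{\div\mathbf u}_{L^2}$ and \eqref{rho High}, so the constant is time-independent), you carve out the set $S$ of times where $\norm{\xi}_{L^\infty}$ exceeds $M_1/4$, show by Chebyshev and the energy equality that $|S|\le C/M_1^2$, and on $S$ fall back to the spatial bound $\int_\Omega(\rho-M_1)_+^{k\gamma-1}\,dx\le C$ from \eqref{rho High}; on $S^c$ your pointwise comparison $(\rho-M_1)_+<2f$ is tight enough to invoke Step~1 directly. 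Both arguments are sound and use essentially the same ingredients ($\norm{u_r}_{L^\infty}\lesssim\norm{\div\mathbf u}_{L^2}\in L^2_t$, \eqref{rho High}, the dissipation in \eqref{Dissipation}); the paper's is the more direct computation, while yours sidesteps the exponent bookkeeping at the cost of a mild measure-theoretic detour, and is arguably more robust since it never needs $\|\xi\|_{L^\infty}$ raised to a power beyond $2$ in time.
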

\begin{proof}
Assume that \( M \) is the constant in \eqref{Dissipation}. Let $M_1\ge\frac{3}{2}M$, then we obtain from \( f=(\theta+\xi+H-M)_+ \) that 
\begin{align*}
\rho\leq f-2\mu\log\rho-\xi-H+M\leq f+|\xi|+\frac{3M}{2} \leq f+|\xi|+M_1,
\end{align*}
when $\rho\ge M_1$. Therefore, 
\begin{align*}
(\rho-M_1)_+\leq f+|\xi|.
\end{align*}
Direct calculations show that
\begin{align}\label{xi}
\begin{split}
\int_0^T\int_\Omega \rho(\rho-M_1)_+^{k\gamma-1}dxdt\leq C\int_0^T\int_\Omega \rho f^{k\gamma-1}dxdt+C\int_0^T\int_\Omega\rho|\xi|^{k\gamma-1}dxdt,
\end{split}
\end{align}
and
\begin{align}
\int_0^T\int_\Omega \rho(\rho-M_1)_+^{k\gamma-1}dxdt\ge\int_0^T\int_\Omega (\rho-M_1)_+^{k\gamma-1}dxdt.
\end{align}
From \eqref{Dissipation}, we know that the first term on the right-hand side of \eqref{xi} is bounded, so we estimate the second term as follows:
\begin{align}
\begin{split}
C\int_0^T\int_\Omega\rho|\xi|^{k\gamma-1}dxdt&\leq C\norm{\rho}_{L^\infty L^\gamma}\norm{\xi}_{L^{k\gamma-1}L^{\frac{\gamma(k\gamma-1)}{\gamma-1}}}^{k\gamma-1}\leq C\norm{\rho u_r}_{L^{k\gamma-1}L^{\frac{2\gamma(k\gamma-1)}{\gamma(k\gamma-1)+2(\gamma-1)}}}^{k\gamma-1}\\
&\leq C\norm{u_r}_{L^2L^\infty}^2\norm{\sqrt{\rho}u_r}_{L^\infty L^2}^{k\gamma-3}\norm{\sqrt{\rho}}_{L^\infty L^{\frac{\gamma(k\gamma+1)}{2\gamma-1}}}^{k\gamma+1}\leq C,
\end{split}
\end{align}
where we used
\begin{align*}
\frac{\gamma(k\gamma+1)}{2\gamma-1}\leq 2k\gamma.
\end{align*}
Thus, we have completed the proof.
\end{proof}

We utilize the Lemma \ref{Bogovskii} to prove the following Proposition.
\begin{prop}\label{Prop Bogovskii}
Suppose that $\beta=1$, there exists a constant $C$ depending on $\mu,\beta,\gamma,R$ and initial data, but not on time $T$, such that
\begin{align}\label{rho-rho_s}
\int_0^T\int_\Omega (\rho^{\gamma-1}+1)(\rho-\rho_s)^2dxdt\leq C.
\end{align}
\end{prop}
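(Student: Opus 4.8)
The plan is to reduce the statement to the single bound
\begin{align*}
Z:=\int_0^T\!\!\int_\Omega P(\rho-\rho_s)\,dx\,dt\le C,
\end{align*}
and then to close $Z$ by testing the momentum equation against a Bogovskii corrector. We may assume $\rho_0\not\equiv0$, so $\rho_s>0$. First I would record the elementary pointwise inequality
\begin{align*}
c\,(\rho^{\gamma-1}+1)(\rho-\rho_s)^2\le(\rho^\gamma-\rho_s^\gamma)(\rho-\rho_s)\le C\,(\rho^{\gamma-1}+1)(\rho-\rho_s)^2,\qquad\rho\ge0,
\end{align*}
valid for $\gamma>1$ with $c,C$ depending only on $\gamma$ and $\rho_s$ (it follows by inspecting the ratio of the two sides as $\rho\to0,\rho_s,\infty$). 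Since conservation of mass gives $\int_\Omega(\rho-\rho_s)\,dx=0$, we have $\int_\Omega(\rho^\gamma-\rho_s^\gamma)(\rho-\rho_s)\,dx=\int_\Omega P(\rho-\rho_s)\,dx$, so \eqref{rho-rho_s} will follow once $Z\le C$ is proved. Two consequences of the lower bound will be used: $\int_0^T\!\!\int_\Omega(\rho-\rho_s)^2\,dx\,dt\le c^{-1}Z$, and—splitting $\Omega$ into $\{\rho\le2M_1\}$ and $\{\rho>2M_1\}$ with $M_1$ as in \eqref{uniform spatial} enlarged so that $M_1\ge\max\{1,\rho_s\}$, using $\rho(\rho-\rho_s)^2\le2M_1(\rho-\rho_s)^2$ on the first set and $\rho(\rho-\rho_s)^2\le\rho^3\le8(\rho-M_1)_+^{k\gamma-1}$ on the second for a fixed $k$ with $k\gamma\ge4$—the bound
\begin{align*}
\int_0^T\!\!\int_\Omega(\mu+\rho)(\rho-\rho_s)^2\,dx\,dt\le CZ+C.
\end{align*}

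Next I would set $\varphi:=\mathcal B(\rho-\rho_s)$, which is well defined by Lemma \ref{Bogovskii}: $\rho-\rho_s$ has zero mean and, by Proposition \ref{new potential} with $\beta=1$, $\rho$ is bounded in $L^\infty(0,T;L^p)$ for every $p<\infty$, so $\|\varphi\|_{H^1}\le C\|\rho-\rho_s\|_{L^2}\le C$ uniformly in $T$. Testing $\eqref{Equ1}_2$ with $\varphi$ and integrating over $\Omega\times(0,T)$, using $\int_\Omega\nabla P\cdot\varphi\,dx=-\int_\Omega P(\rho-\rho_s)\,dx$, integrating the term $\int_\Omega(\rho\mathbf u)_t\cdot\varphi\,dx$ by parts in time, and noting that the continuity equation gives $\varphi_t=\mathcal B(\rho_t)=-\mathcal B(\div(\rho\mathbf u))$ with $\|\mathcal B(\div(\rho\mathbf u))\|_{L^2}\le C\|\rho\mathbf u\|_{L^2}$ (Lemma \ref{Bogovskii}(ii), since $\rho\mathbf u\cdot\mathbf n=0$ on $\partial\Omega$), one arrives at
\begin{align*}
Z&=\Big[\int_\Omega\rho\mathbf u\cdot\varphi\,dx\Big]_0^T+\int_0^T\!\!\int_\Omega\rho\mathbf u\cdot\mathcal B(\div(\rho\mathbf u))\,dx\,dt-\int_0^T\!\!\int_\Omega\rho\,(\mathbf u\otimes\mathbf u):\nabla\varphi\,dx\,dt\\
&\quad+\mu\int_0^T\!\!\int_\Omega\nabla\mathbf u:\nabla\varphi\,dx\,dt+\int_0^T\!\!\int_\Omega(\mu+\lambda)(\div\mathbf u)(\rho-\rho_s)\,dx\,dt.
\end{align*}

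Then I would estimate the five terms using only the energy identity \eqref{energy estimate}, the uniform $L^p$ bounds on $\rho$, \eqref{uniform spatial}, Lemma \ref{G-N ineq} and Lemma \ref{Bogovskii}. The boundary term is bounded by $\sup_t\big(\|\sqrt\rho\mathbf u\|_{L^2}\|\rho\|_{L^2}^{1/2}\|\varphi\|_{L^4}\big)+C\le C$. The second term is $\le C\int_0^T\|\rho\mathbf u\|_{L^2}^2\,dt\le C\int_0^T\|\rho\|_{L^4}^2\|\mathbf u\|_{L^4}^2\,dt\le C\int_0^T\|\nabla\mathbf u\|_{L^2}^2\,dt\le C$. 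The convection term is $\le\sup_t\|\nabla\varphi\|_{L^2}\int_0^T\|\sqrt\rho\mathbf u\|_{L^4}^2\,dt\le C$, since $\|\sqrt\rho\mathbf u\|_{L^4}^4\le\|\rho\|_{L^6}^2\|\mathbf u\|_{L^6}^4\le C\|\nabla\mathbf u\|_{L^2}^4$. The diffusion term is $\le\mu\big(\int_0^T\|\nabla\mathbf u\|_{L^2}^2\,dt\big)^{1/2}\big(\int_0^T\!\!\int_\Omega(\rho-\rho_s)^2\,dx\,dt\big)^{1/2}\le CZ^{1/2}$. The last term—the only delicate one—I would handle without introducing the effective viscous flux: writing $\mu+\lambda=\mu+\rho$ and applying Hölder,
\begin{align*}
\Big|\int_0^T\!\!\int_\Omega(\mu+\rho)(\div\mathbf u)(\rho-\rho_s)\,dx\,dt\Big|&\le\Big(\int_0^T\|\sqrt{\mu+\lambda}\,\div\mathbf u\|_{L^2}^2\,dt\Big)^{1/2}\Big(\int_0^T\!\!\int_\Omega(\mu+\rho)(\rho-\rho_s)^2\,dx\,dt\Big)^{1/2}\\
&\le C\,(CZ+C)^{1/2}\le C+CZ^{1/2},
\end{align*}
the first factor being controlled by \eqref{energy estimate} and the second by the bound from the first step.

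Collecting these estimates gives $Z\le C+CZ^{1/2}$, hence $Z\le C$ (the solution being smooth on the finite interval $[0,T]$, $Z$ is a priori finite), and then $\int_0^T\!\!\int_\Omega(\rho^{\gamma-1}+1)(\rho-\rho_s)^2\,dx\,dt\le c^{-1}Z\le C$, which is \eqref{rho-rho_s}. The hard part is the term $\int_0^T\!\!\int_\Omega(\mu+\lambda)(\div\mathbf u)(\rho-\rho_s)$: estimating it by pairing $\div\mathbf u$—which is only square integrable in space–time, through the energy inequality—with the uniformly bounded factor $(\mu+\rho)(\rho-\rho_s)$ would cost a factor $T^{1/2}$; the fix is to split the viscosity $\mu+\lambda=\mu+\rho$ and pair $\sqrt{\mu+\lambda}\,\div\mathbf u$ against $\sqrt{\mu+\lambda}\,(\rho-\rho_s)$, whose space–time $L^2$ norm is bounded by $CZ+C$ precisely because of the uniform spatial higher integrability \eqref{uniform spatial}; this is where the endpoint $\beta=1$ (so that $\lambda=\rho$) is used.
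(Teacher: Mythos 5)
Your proposal is correct and follows the same Bogovskii argument as the paper: test the momentum equation against $\mathcal{B}(\rho-\rho_s)$, integrate by parts in time, and control the resulting five terms via the energy identity \eqref{energy estimate}, the uniform $L^p$ bounds on $\rho$ from Proposition \ref{new potential}, and the space--time integrability \eqref{uniform spatial} of $(\rho-M_1)_+$. The only difference is bookkeeping in the viscosity term $\int(\mu+\lambda)(\div\mathbf{u})(\rho-\rho_s)$: the paper splits $\rho$ near $M_1$ and uses a three-exponent Young inequality, absorbing the $\varepsilon$-pieces pointwise into the coercive left-hand side, while you pair $\sqrt{\mu+\lambda}\,\div\mathbf{u}$ against $\sqrt{\mu+\lambda}\,(\rho-\rho_s)$ and close via the self-improving inequality $Z\le C+CZ^{1/2}$ --- logically equivalent and equally reliant on $\lambda=\rho$ at $\beta=1$.
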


\begin{proof}
Taking the inner product of $\eqref{Equ1}_2$ with \(\mathcal{B}(\rho-\rho_s)\) and integrating by parts yields
\begin{align}\label{rho gamma- rhos gamma}
\begin{split}
\int_\Omega (\rho^\gamma-\rho_s^\gamma)(\rho-\rho_s)dx&=\frac{d}{dt}\int_\Omega \rho \mathbf{u}\cdot\mathcal{B}(\rho-\rho_s)dx-\int_\Omega \rho\mathbf{u}\cdot\mathcal{B}(\partial_t\rho)dx\\
&\quad-\int_\Omega \rho\mathbf{u}\otimes\mathbf{u}:\nabla \mathcal{B}(\rho-\rho_s)dx+\mu\int_\Omega \nabla\mathbf{u}:\nabla \mathcal{B}(\rho-\rho_s)dx\\
&\quad+\int_\Omega(\mu+\lambda)\div\mathbf{u}(\rho-\rho_s)dx=\sum_{i=0}^{4}V_i.
\end{split}
\end{align}
The H\"{o}lder inequality, \eqref{rho High} and Lemma \ref{Bogovskii} indicate that
\begin{align}\label{V_1}
V_1\leq C\norm{\rho \mathbf{u}}_{L^2}^2\leq C\norm{\nabla \mathbf{u}}_{L^2}^2,
\end{align}
and 
\begin{align}\label{V_2}
V_2\leq C\int_\Omega\rho|\mathbf{u}|^2|\nabla\mathcal{B}(\rho-\rho_s)|dx\leq C\norm{\rho}_{L^2}\norm{\nabla\mathcal{B}(\rho-\rho_s)}_{L^2}\norm{\nabla \mathbf{u}}_{L^2}^2\leq C\norm{\nabla \mathbf{u}}_{L^2}^2.
\end{align}
By Young's inequality, we obtain
\begin{align}\label{V_3}
V_3\leq C\norm{\nabla\mathcal{B}(\rho-\rho_s)}_{L^2}\norm{\nabla \mathbf{u}}_{L^2}\leq \varepsilon\norm{\rho-\rho_s}_{L^2}^2+C\norm{\nabla\mathbf{u}}_{L^2}^2,
\end{align}
and
\begin{align}\label{V_4}
\begin{split}
V_4&\leq C\int_\Omega(\rho-M_1)_+|\div\mathbf{u}||\rho-\rho_s|+M_1 |\div\mathbf{u}||\rho-\rho_s|dx\\
&\leq \varepsilon\int_\Omega|\rho-\rho_s|^{\gamma+1}+|\rho-\rho_s|^2 dx+C\int_\Omega (\rho-M_1)_+^{\frac{2(\gamma+1)}{\gamma-1}}+|\div\mathbf{u}|^2dx.
\end{split}
\end{align}
Note the following simple fact:
\begin{align*}
\begin{split}
\int_\Omega (\rho^\gamma-\rho_s^\gamma)(\rho-\rho_s)dx\ge C\int_\Omega(\rho^{\gamma-1}+1)(\rho-\rho_s)^2dx.
\end{split}
\end{align*}
Therefore, substituting \eqref{V_1}-\eqref{V_4} into equation \eqref{rho gamma- rhos gamma}, we get
\begin{align}\label{new rho gamma- rhos gamma}
\begin{split}
C\int_\Omega(\rho^{\gamma-1}+1)(\rho-\rho_s)^2dx&\leq \frac{d}{dt}\int_\Omega \rho \mathbf{u}\cdot\mathcal{B}(\rho-\rho_s)dx+C\int_\Omega (\rho-M_1)_+^{\frac{2(\gamma+1)}{\gamma-1}}dx+C\norm{\nabla \mathbf{u}}_{L^2}^2.
\end{split}
\end{align}
Finally, we use \eqref{rho High} to obtain
\begin{align*}
\left|\int_\Omega \rho\mathbf{u}\cdot\mathcal{B}(\rho-\rho_s)dx\right|\leq \norm{\sqrt{\rho}}_{L^4}\norm{\sqrt{\rho}\mathbf{u}}_{L^2}\norm{\mathcal{B}(\rho-\rho_s)}_{L^4}\leq C.
\end{align*}
Thus, integrating \eqref{new rho gamma- rhos gamma} with respect to time, we complete the proof with \eqref{uniform spatial}.
\end{proof}

The following Proposition, similar to Proposition 3.4 in \cite{2023 Fan-Wang-Li-arXiv}, shows the higher integrability of the velocity field, which plays an important role in bounding the upper bound of the density.
\begin{prop}\label{prop u}
Suppose that $\beta=1$, there exists constants $C(\mu,\gamma)\in\left(0,\max\left\{\frac{1}{2},\frac{\gamma-1}{2(\gamma+1)}\right\}\right)$ depend only on $\mu,\gamma$ and $C$ depend only on $\mu,\beta,\gamma,R$ and initial data, but not on $T$, such that for all $\delta\leq C(\mu,\gamma)R_T^{-\frac{1}{2}}$, 
	\begin{equation}  \label{u high}
	\sup_{0\leq t\leq T}\int_{\Omega} \rho |\mathbf{u}|^{2+\delta}dx\leq C.
	\end{equation}
\end{prop}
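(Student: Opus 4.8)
The plan is a weighted energy estimate for $\int_\Omega\rho\abs{\mathbf{u}}^{2+\delta}\,dx$. I would write the momentum equation in the form $\rho\dot{\mathbf{u}}=\nabla G+\mu\nabla^\perp w$ (cf.\ \eqref{RME2}), multiply by $(2+\delta)\abs{\mathbf{u}}^\delta\mathbf{u}$ and integrate over $\Omega$; the continuity equation turns the left side into $\frac{d}{dt}\int_\Omega\rho\abs{\mathbf{u}}^{2+\delta}\,dx$. Integrating by parts on the right (with $\nabla\abs{\mathbf{u}}^\delta=\tfrac{\delta}{2}\abs{\mathbf{u}}^{\delta-2}\nabla\abs{\mathbf{u}}^2$) produces three kinds of terms: the dissipation $(2+\delta)\int_\Omega(2\mu+\lambda)\abs{\mathbf{u}}^\delta(\div\mathbf{u})^2\,dx+\mu(2+\delta)\int_\Omega\abs{\mathbf{u}}^\delta w^2\,dx$, which by the radially symmetric identity $\abs{\nabla\mathbf{u}}^2=(\partial_r u_r)^2+(\partial_r u_\theta)^2+r^{-2}(u_r^2+u_\theta^2)$ together with $\mathbf{u}|_{\partial\Omega}=0$ controls $\mu(2+\delta)\int_\Omega\abs{\mathbf{u}}^\delta\abs{\nabla\mathbf{u}}^2\,dx$ and, after splitting $2\mu+\lambda=\mu+(\mu+\lambda)$, also (up to a pressure error pushed to the right) $\int_\Omega\tfrac{G^2}{2\mu+\lambda}\abs{\mathbf{u}}^\delta\,dx$; commutators of size $O(\delta)$ coming from the weight, essentially $O(\delta)\int_\Omega(\abs{G}+\abs{w}+1)\abs{\mathbf{u}}^\delta\abs{\nabla\mathbf{u}}\,dx$; and the pressure term carried by $G=(2\mu+\lambda)\div\mathbf{u}-(P-\bar{P})$.

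The crucial idea for the pressure is to invoke the continuity equation $\div\mathbf{u}=-\rho^{-1}\tfrac{D\rho}{Dt}$ rather than the algebraic relation $\div\mathbf{u}=\tfrac{G+P-\bar{P}}{2\mu+\lambda}$ (which would leave a genuinely bad $\int_\Omega(P-\bar{P})^2\abs{\mathbf{u}}^\delta$): one obtains
\begin{align*}
\int_\Omega P\abs{\mathbf{u}}^\delta\div\mathbf{u}\,dx=-\frac{1}{\gamma-1}\frac{d}{dt}\int_\Omega\rho^\gamma\abs{\mathbf{u}}^\delta\,dx+\frac{1}{\gamma-1}\int_\Omega\rho^\gamma\frac{D}{Dt}\abs{\mathbf{u}}^\delta\,dx,
\end{align*}
and $\frac{D}{Dt}\abs{\mathbf{u}}^\delta=\delta\abs{\mathbf{u}}^{\delta-2}\mathbf{u}\cdot\dot{\mathbf{u}}$, so the pressure is a total time derivative, which I move to the left (it only strengthens the functional since $\int_\Omega\rho^\gamma\abs{\mathbf{u}}^\delta\,dx\ge0$), plus an $O(\delta)$ term bounded by $\delta\int_\Omega\rho^\gamma\abs{\mathbf{u}}^{\delta-1}\abs{\dot{\mathbf{u}}}\,dx$. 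Writing $Y(t):=\int_\Omega\big(\rho\abs{\mathbf{u}}^{2+\delta}+\tfrac{2+\delta}{\gamma-1}\rho^\gamma\abs{\mathbf{u}}^\delta\big)\,dx\ge\int_\Omega\rho\abs{\mathbf{u}}^{2+\delta}\,dx$, one is left, schematically, with
\begin{align*}
\frac{d}{dt}Y+c\int_\Omega\abs{\mathbf{u}}^\delta\abs{\nabla\mathbf{u}}^2\,dx
&\le C\delta\int_\Omega\big(\abs{G}+\abs{w}+1\big)\abs{\mathbf{u}}^\delta\abs{\nabla\mathbf{u}}\,dx\\
&\quad+C\delta\int_\Omega\rho^\gamma\abs{\mathbf{u}}^{\delta-1}\abs{\dot{\mathbf{u}}}\,dx.
\end{align*}

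Every term on the right is treated by Cauchy--Schwarz and Young: the $\abs{\mathbf{u}}^\delta\abs{\nabla\mathbf{u}}^2$-, $w^2$- and $\tfrac{G^2}{2\mu+\lambda}\abs{\mathbf{u}}^\delta$-weighted parts get absorbed into the dissipation, while the factors built from $\rho$ and $P-\bar{P}$ are controlled using the time-independent higher integrability $\sup_t\int_\Omega\rho^{k\gamma+1}\,dx\le C$ of Proposition \ref{new potential}, the damping bound $\int_0^T\!\!\int_\Omega(\rho^{\gamma-1}+1)(\rho-\rho_s)^2\,dx\,dt\le C$ of Proposition \ref{Prop Bogovskii}, the energy identity \eqref{energy estimate}, and $\norm{\mathbf{u}}_{L^\infty}\le C\norm{\nabla\mathbf{u}}_{L^2}$ from Lemma \ref{u inf}. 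The decisive step is the commutator $\delta\int_\Omega\abs{G}\abs{\mathbf{u}}^\delta\abs{\nabla\mathbf{u}}\,dx$: pairing $\abs{G}$ with the $\tfrac{G^2}{2\mu+\lambda}$-dissipation and $\abs{\nabla\mathbf{u}}$ with the gradient dissipation leaves a residue $\lesssim\delta^2\bigl(2\mu+\norm{\rho}_{L^\infty}\bigr)\int_\Omega\abs{\mathbf{u}}^\delta\abs{\nabla\mathbf{u}}^2\,dx\lesssim\delta^2R_T\int_\Omega\abs{\mathbf{u}}^\delta\abs{\nabla\mathbf{u}}^2\,dx$, which is reabsorbed only when $\delta^2R_T$ stays below a constant depending on $\mu,\gamma$ --- this is exactly the source of the hypothesis $\delta\le C(\mu,\gamma)R_T^{-1/2}$, and the auxiliary constant $\tfrac{\gamma-1}{2(\gamma+1)}$ enters through the H\"older exponents needed when splitting the $(P-\bar{P})$-remainders against powers of $\rho$ (which also keeps each factor $R_T^{c\delta}=e^{c\delta\log R_T}$ bounded). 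After this, the right side has the form $\frac{d}{dt}Y\le C\norm{\nabla\mathbf{u}}_{L^2}^2(1+Y)+h(t)$ with $\int_0^T h\,dt\le C$ uniformly in $T$, and Gronwall together with $\int_0^T\norm{\nabla\mathbf{u}}_{L^2}^2\,dt\le C$ and $Y(0)\le C$ (from $\norm{\rho_0}_{L^\infty}\le C$ and $\norm{\mathbf{u}_0}_{L^\infty}\le C\norm{\nabla\mathbf{u}_0}_{L^2}$) yields $\sup_{[0,T]}Y\le C$, hence \eqref{u high}; should a stray term resist this template, a continuity argument ($\sup_{[0,T]}Y\le 2C\Rightarrow\sup_{[0,T]}Y\le C$) achieves the same.

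The hard part is keeping every constant genuinely independent of $T$. At this stage the only bound on $\norm{\nabla\mathbf{u}}_{L^2}$ is the exponential one $\sup_t\norm{\nabla\mathbf{u}(t)}_{L^2}^2\le Ce^{CR_T^{1+\alpha}}$ from Proposition \ref{new log}, which is far too crude to let one pull a factor $\norm{\mathbf{u}}_{L^\infty}^\delta$ out of a time integral as a supremum; such factors must stay inside the integral, paired with an $L^1_t$ quantity (the energy dissipation or the damping) and bounded through $\norm{\mathbf{u}}_{L^\infty}^\delta\le C(1+\norm{\nabla\mathbf{u}}_{L^2}^2)$. Making the weighted bookkeeping close in this way --- in particular balancing the $\delta$-commutators (which force $\delta^2R_T$ small, hence the rate $\delta\sim R_T^{-1/2}$) against the pressure and damping terms, and taming the borderline term $\delta\int_\Omega\rho^\gamma\abs{\mathbf{u}}^{\delta-1}\abs{\dot{\mathbf{u}}}\,dx$ --- is the delicate heart of the argument.
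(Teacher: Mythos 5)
Your overall framework---test the momentum equation with $(2+\delta)\abs{\mathbf{u}}^\delta\mathbf{u}$, absorb the weight commutator $\delta(2+\delta)\int_\Omega(\mu+\lambda)\abs{\div\mathbf{u}}\,\abs{\mathbf{u}}^\delta\abs{\nabla\abs{\mathbf{u}}}\,dx$ via the smallness $\delta^2(\mu+\lambda)<2\mu$ (the source of $\delta\lesssim R_T^{-1/2}$, since $\lambda=\rho$ for $\beta=1$), and invoke the time-integrated bounds \eqref{uniform spatial} and \eqref{rho-rho_s}---is the paper's, and you have correctly identified where both thresholds on $\delta$ come from. But your treatment of the pressure term has a genuine gap. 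You propose to rewrite $\int_\Omega P\abs{\mathbf{u}}^\delta\div\mathbf{u}\,dx$ via the continuity equation as $-\tfrac{1}{\gamma-1}\tfrac{d}{dt}\int_\Omega\rho^\gamma\abs{\mathbf{u}}^\delta\,dx+\tfrac{1}{\gamma-1}\int_\Omega\rho^\gamma\tfrac{D}{Dt}\abs{\mathbf{u}}^\delta\,dx$ and move the first term to the left. The commutator $\tfrac{D}{Dt}\abs{\mathbf{u}}^\delta=\delta\abs{\mathbf{u}}^{\delta-2}\mathbf{u}\cdot\dot{\mathbf{u}}$, which you yourself flag as ``the delicate heart'' without resolving, is not controllable at this stage. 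First, it involves $\dot{\mathbf{u}}$: before the uniform density bound is proved, the only time-integrated control on $\sqrt{\rho}\dot{\mathbf{u}}$ is the weighted bound $\int_0^T A_2^2/(e+A_1^2)\,dt\leq CR_T^{1+\alpha}$ of Proposition~\ref{new log}, which does not control $\int_0^T\int_\Omega\rho\abs{\dot{\mathbf{u}}}^2\,dx\,dt$ uniformly, and Proposition~\ref{Second} (which would) presupposes the density bound, so using it here is circular. Second, the weight $\abs{\mathbf{u}}^{\delta-1}$ with $0<\delta<1$ is singular where $\mathbf{u}$ vanishes, and $\dot{\mathbf{u}}$ does not compensate for that singularity. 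Neither the Gronwall iteration nor the bootstrap $\sup Y\leq 2C\Rightarrow\sup Y\leq C$ you suggest rescues this, since an a priori bound on $Y$ says nothing about $\dot{\mathbf{u}}$ or the singular weight.

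The paper's route is simpler and avoids the problem entirely: after integrating by parts one subtracts the constant $\rho_s^\gamma$ (whose contribution vanishes against $\div((2+\delta)\abs{\mathbf{u}}^\delta\mathbf{u})$), bounds the result by $C\int_\Omega\abs{\rho^\gamma-\rho_s^\gamma}\,\abs{\mathbf{u}}^\delta\abs{\nabla\mathbf{u}}\,dx\leq C\int_\Omega(\rho^{\gamma-1}+1)\abs{\rho-\rho_s}\,\abs{\mathbf{u}}^\delta\abs{\nabla\mathbf{u}}\,dx$, splits $\rho^{\gamma-1}\leq(\rho-M_1)_+^{\gamma-1}+M_1^{\gamma-1}$, and applies a four-factor Young inequality with exponents $q$ (for $(\rho-M_1)_+$), $\gamma+1$ (for $\abs{\rho-\rho_s}$), $2/\delta$ (for $\abs{\mathbf{u}}^\delta$) and $2$ (for $\abs{\nabla\mathbf{u}}$), whose compatibility forces exactly $\delta<\tfrac{\gamma-1}{2(\gamma+1)}$. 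Every resulting term is $L^1$ in space-time by \eqref{uniform spatial}, \eqref{rho-rho_s} and the energy identity, so a direct time-integration---no Gronwall, no modified functional $Y$---closes the proof. If you replace your continuity-equation reformulation of the pressure by this direct Young estimate, the rest of your outline goes through.
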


\begin{proof}
Multiplying $\eqref{Equ1}_2$ by $(2+\delta)|\mathbf{u}|^\delta \mathbf{u}$, and integrating over $\Omega$ we get
\begin{align*}
\begin{split}
& \frac{d}{dt} \int_\Omega \rho|\mathbf{u}|^{2+\delta}dx+\mu(2+\delta) \int_\Omega|\mathbf{u}|^\delta(|\nabla \mathbf{u}|^2+\delta|
\nabla|\mathbf{u}||^2)dx+(2+\delta)\int_\Omega(\mu+\lambda)|\mathbf{u}|^\delta|\div \mathbf{u}|^2dx \\
& \leq C \int_\Omega |\rho^\gamma-\rho_s^\gamma||\mathbf{u}|^\delta|\nabla \mathbf{u}|dx+\delta(2+\delta) \int_\Omega(\mu+\lambda)|\div \mathbf{u} ||\mathbf{u}|^\delta|\nabla|\mathbf{u}||dx \\
& \leq C \int_\Omega |\rho^\gamma-\rho_s^\gamma||\mathbf{u}|^\delta|\nabla \mathbf{u}|dx+ \frac{\mu(2+\delta)}{2} \int_\Omega|\mathbf{u}|^\delta|\nabla| \mathbf{u}||^2dx+\frac{1}{2\mu}\delta^2(2+\delta)  \int_\Omega(\mu+\lambda)^2|\mathbf{u}|^\delta|\operatorname{div} \mathbf{u}|^2dx.
\end{split}
\end{align*}
Note that 
\begin{align*}
(2+\delta)(\mu+\lambda)>\frac{1}{2\mu}\delta^2(2+\delta) (\mu+\lambda)^2,
\end{align*}
due to 
\begin{align*}
\delta^2\leq (C(\mu,\gamma))^2R_T^{-1}< \frac{2\mu}{\mu+R_T}\leq \frac{2\mu}{\mu+\lambda}.
\end{align*}
Therefore,
\begin{align}\label{rho u 2+delta}
\begin{split}
& \frac{d}{dt} \int_\Omega\rho|\mathbf{u}|^{2+\delta}dx+\dfrac{\mu(2+\delta)}{2} \int_\Omega|\mathbf{u}|^\delta|\nabla \mathbf{u}|^2dx+\mu\delta(2+\delta)\int_\Omega|\mathbf{u}|^\delta|
\nabla|\mathbf{u}||^2dx \\
& \leq C \int_\Omega (\rho^{\gamma-1}+1)|\rho-\rho_s||\mathbf{u}|^\delta|\nabla \mathbf{u}|dx\\
&\leq C\int_\Omega (\rho-M_1)_+^{\gamma-1}|\rho-\rho_s||\mathbf{u}|^\delta|\nabla \mathbf{u}|dx+C\int_\Omega M_1^{\gamma-1}|\rho-\rho_s||\mathbf{u}|^\delta|\nabla \mathbf{u}|dx\\
&\leq C\int_\Omega|\rho-\rho_s|^{\gamma+1}+(\rho-M_1)_+^q+|\nabla\mathbf{u}|^2+|\mathbf{u}|^2dx+C\int_\Omega |\rho-\rho_s|^{p}+|\mathbf{u}|^2+|\nabla\mathbf{u}|^2dx,
\end{split}
\end{align}
where $q$, $p$ satisfy
\begin{align*}
\frac{\gamma-1}{q}+\frac{1}{\gamma+1}+\frac{\delta}{2}+\frac{1}{2}=1,\quad \frac{1}{p}+\frac{\delta}{2}+\frac{1}{2}=1.
\end{align*}
Due to $0<\delta<\frac{\gamma-1}{2(\gamma+1)}$, one has
\begin{align*}
q\in\left(2\gamma+2,4\gamma+4\right),\quad p\in\left(2,\frac{4\gamma+4}{\gamma+3}\right).
\end{align*}
We calculate
\begin{align*}
|\rho-\rho_s|^{\gamma+1}\leq C(\rho^{\gamma-1}+1)(\rho-\rho_s)^2,
\end{align*}
and 
\begin{align*}
|\rho-\rho_s|^p\leq  C(\rho^{\gamma-1}+1)(\rho-\rho_s)^2.
\end{align*}
Integrating \eqref{rho u 2+delta} with respect to time, we complete the proof using \eqref{uniform spatial} and \eqref{rho-rho_s}.
\end{proof}

With the above lemmas in hand, we can now derive the uniform upper bound of the density.
\begin{prop}\label{rho upper}
If $\beta= 1$, there exists a constant $C$ depend on $\mu,\beta,\gamma,R$ and initial data, but not on time $T$, such that
\begin{align}\label{upper bound density}
\sup_{0\le t\le T}\norm{\rho(t)}_{L^\infty}\leq C.
\end{align}
\end{prop}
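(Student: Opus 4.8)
The plan is to run the Zlotnik scheme of Propositions \ref{uniform bound1} and \ref{new upper bound} along particle trajectories on the renormalized flow structure \eqref{Flow  structure}; the one genuinely new ingredient is the \emph{sublinear} bound $\norm{\xi}_{L^\infty L^\infty}\leq CR_T^{9/10}$ furnished by Proposition \ref{prop u}. Fix $x\in\Omega$, let $x(t)$ solve $\dot x(t)=\mathbf{u}(x(t),t)$ with $x(0)=x$, and set $r(t)=|x(t)|$. Combining \eqref{Flow  structure} with \eqref{RME 2} shows that $y(t):=\theta(\rho(x(t),t))$ satisfies $y'(t)=g(y(t))+h'(t)$, where $g(y)=-P(\theta^{-1}(y))=-(\theta^{-1}(y))^\gamma$ and
\[
h(t)=-\xi(x(t),t)+\int_0^t\!\!\int_{r(\tau)}^R\frac{\rho}{s}\big(u_r^2-u_\theta^2\big)\,ds\,d\tau+\int_0^t\bar P\,d\tau-\int_0^t G(R,\tau)\,d\tau .
\]
Since $\theta$ is strictly increasing with $\theta(\infty)=\infty$, we have $g\in C(\mathbb{R})$ and $g(\infty)=-\infty$, so by Lemma \ref{Zlotnik} it suffices to bound $h(t_2)-h(t_1)\leq N_0+N_1(t_2-t_1)$ with $N_0$ growing \emph{strictly sublinearly} in $R_T$. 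The sublinearity is what makes the endpoint delicate: for $\beta=1$ the potential is $\theta(\rho)=2\mu\log\rho+\rho$, which grows only like $\rho$, so any $N_0$ comparable to $R_T$ would be useless — unlike $\beta>1$, where $\theta(\rho)\sim\rho^\beta$ provides slack.

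For the first, decisive term I would take $\delta=C(\mu,\gamma)R_T^{-1/2}$ as in Proposition \ref{prop u}. Writing $\rho|u_r|=(s\rho|u_r|^{2+\delta})^{\frac1{2+\delta}}\rho^{\frac{1+\delta}{2+\delta}}s^{-\frac1{2+\delta}}$, Hölder's inequality, the uniform bound $\int_\Omega\rho|\mathbf{u}|^{2+\delta}dx\leq C$, and $\int_0^R s^{-\frac1{1+\delta}}ds\leq C\delta^{-1}\leq CR_T^{1/2}$ give
\[
\norm{\xi(t)}_{L^\infty}\leq\int_0^R\rho|u_r|\,ds\leq C\norm{\rho(t)}_{L^\infty}^{\frac{1+\delta}{2+\delta}}\Big(\int_0^R s^{-\frac1{1+\delta}}\,ds\Big)^{\frac{1+\delta}{2+\delta}}\leq CR_T^{\frac32\cdot\frac{1+\delta}{2+\delta}}\leq CR_T^{9/10},
\]
the last inequality because $\frac32\cdot\frac{1+\delta}{2+\delta}\leq\frac9{10}$ for all admissible $\delta$ (equality at $\delta=\tfrac12$, while $\delta<\tfrac12$ here since $C(\mu,\gamma)<\tfrac12$ and $R_T\geq1$). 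Next, $\int_{t_1}^{t_2}\bar P\,d\tau\leq C(t_2-t_1)$ by \eqref{energy estimate}, so this term feeds only $N_1$; and the boundary term is controlled by Proposition \ref{G boundary}, which applies since $1\leq\beta=1\leq\gamma$: $\big|\int_{t_1}^{t_2}G(R,\tau)\,d\tau\big|\leq2\sup_{0\leq t\leq T}\big|\int_0^t G(R,\tau)\,d\tau\big|\leq C$.

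The remaining term $\int_{t_1}^{t_2}\int_{r}^R\frac{\rho}{s}(u_r^2-u_\theta^2)\,ds\,d\tau$ is the main obstacle: after discarding the sign-favorable $-u_\theta^2$, it is at most $\int_0^T\int_0^R\frac{\rho u_r^2}{s}\,ds\,d\tau=\frac1{2\pi}\int_0^T\int_\Omega\rho\big|\tfrac{u_r}{r}\big|^2\,dx\,d\tau$, and the crude bound $\leq\norm{\rho}_{L^\infty}\int_0^T\norm{\tfrac{u_r}{r}}_{L^2}^2\,d\tau\leq CR_T$ is merely linear, which is fatal at $\beta=1$. To gain sublinearity I would exploit the radial structure once more: the one-dimensional Hardy inequality with weight $r\,dr$ (valid for exponents $>2$ because $u_r(0)=0$) gives $\norm{\tfrac{u_r}{r}}_{L^q}\leq C\norm{\nabla\mathbf{u}}_{L^q}$ for every $q>2$; Hölder against the time-uniform integrability $\norm{\rho}_{L^\infty L^{k\gamma+1}}\leq C$ of Proposition \ref{new potential} (with $k$ large, so that the conjugate exponent of $\tfrac{u_r}{r}$ is only slightly above $2$) then reduces the matter to $\int_0^T\norm{\nabla\mathbf{u}}_{L^q}^2\,d\tau$ with $q=2+O(1/k)$, which is estimated by interpolating between the energy bound on $\norm{\nabla\mathbf{u}}_{L^2}$ and an $L^{q_0}$-bound coming from the bounds \eqref{G,w} and standard elliptic regularity together with Proposition \ref{new log}; since the interpolation weight is $O(1/k)$, the outcome is $\leq CR_T^{\varepsilon}$ with $\varepsilon$ as small as we wish. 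Collecting the four estimates, $h(t_2)-h(t_1)\leq CR_T^{9/10}+C(t_2-t_1)$, so Lemma \ref{Zlotnik} yields $\theta(\rho(x(t),t))\leq\max\{\theta(\rho_0),\tilde{\zeta}\}+CR_T^{9/10}$ with $\tilde{\zeta}$ a fixed constant (depending only on $N_1$, as $g(\infty)=-\infty$). Taking the supremum over $x$ and $t$ and using $\theta(\rho)\geq\rho$ whenever $\rho\geq1$, we obtain $R_T\leq C+CR_T^{9/10}$, hence $R_T\leq C$, which is \eqref{upper bound density}.
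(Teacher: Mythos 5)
Your overall framework (Zlotnik on the renormalized flow structure, and the decisive sublinear bound $\norm{\xi}_{L^\infty L^\infty}\leq CR_T^{9/10}$ from Proposition \ref{prop u}) matches the paper, and your treatment of the $\xi$-term, the $\bar P$-term, and the boundary term $\int_0^t G(R,\tau)\,d\tau$ is all correct. The genuine gap is in the term
$\int_{t_1}^{t_2}\int_r^R\frac{\rho}{s}(u_r^2+u_\theta^2)\,ds\,d\tau$. Your proposed cure --- Hardy inequality $\norm{u_r/r}_{L^q}\leq C\norm{\nabla\mathbf{u}}_{L^q}$, H\"older against Proposition \ref{new potential}, then interpolation between $\norm{\nabla\mathbf{u}}_{L^2}$ and $\norm{\nabla\mathbf{u}}_{L^{q_0}}$ --- requires a bound on $\int_0^T\norm{\nabla\mathbf{u}}_{L^{q_0}}^2\,d\tau$ (or on $\sup_t\norm{\nabla\mathbf{u}}_{L^{q_0}}$) that grows at most \emph{polynomially} in $R_T$, so that a small interpolation weight $\sigma=O(1/k)$ can kill it. But the tools you cite do not supply such a bound at this stage: Proposition \ref{new log} only gives $\sup_t\log(e+A_1^2+A_3^2)\leq CR_T^{1+\alpha\beta}$, i.e.\ $A_1$ and $A_3$ --- and hence every $\norm{G}_{H^1}$, $\norm{w}_{H^1}$ quantity you would feed into \eqref{G,w} and elliptic regularity --- are controlled only by $\exp(CR_T^{1+\alpha\beta})$. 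Similarly, the dissipation term in Proposition \ref{new log} appears as $\int_0^T A_2^2/(e+A_1^2)\,dt\leq CR_T^{1+\alpha\beta}$, which again does not yield a polynomial bound on $\int_0^T A_2^2\,dt$ once the exponentially large $A_1$ is removed from the denominator. An $O(1/k)$ exponent cannot defeat an exponential, so the interpolation step does not close.

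The paper sidesteps this entirely by \emph{not} seeking a sublinear bound for this term. Instead it keeps $\norm{\rho(\tau)}_{L^\infty}$ inside the time integral:
\begin{align*}
\int_{t_1}^{t_2}\int_r^R\frac{\rho}{s}(u_r^2+u_\theta^2)\,ds\,d\tau
\leq C\int_0^t\norm{\rho(\tau)}_{L^\infty}\norm{\nabla\mathbf{u}(\tau)}_{L^2}^2\,d\tau ,
\end{align*}
using $|\nabla\mathbf{u}|^2=(\partial_r u_r)^2+(\partial_r u_\theta)^2+u_r^2/r^2+u_\theta^2/r^2$. This quantity (for fixed $t$) is independent of $t_1,t_2\in[0,t]$, so it is legitimately absorbed into $N_0$ when applying Lemma \ref{Zlotnik} on $[0,t]$. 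Taking the supremum over $x$ then gives the integral inequality
\begin{align*}
\norm{\rho(t)}_{L^\infty}\leq C+CR_T^{9/10}+C\int_0^t\norm{\rho(\tau)}_{L^\infty}\norm{\nabla\mathbf{u}(\tau)}_{L^2}^2\,d\tau ,
\end{align*}
and the time-independent energy bound $\int_0^T\norm{\nabla\mathbf{u}}_{L^2}^2\,d\tau\leq C$ lets Gronwall produce $\norm{\rho(t)}_{L^\infty}\leq C+CR_T^{9/10}$. Only \emph{then} is the supremum over $t$ taken, yielding $R_T\leq C+CR_T^{9/10}$. The Gronwall step, inserted between the spatial and temporal suprema, is precisely the ingredient your proof is missing; without it the linear $R_T$ dependence in this term cannot be tamed.
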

\begin{proof}
The proof is similar to that of Proposition \ref{uniform bound1}, we only need to estimate each term in \eqref{h estimate}, for any $0\leq t_1<t_2\leq t\leq T$,
\begin{align}\label{h estimate2}
\begin{split}
|h(t_2)-h(t_1)|&\leq 2\norm{\xi}_{L^\infty L^\infty}+\int_{t_1}^{t_2}\int_r^R\frac{\rho }{s}(u_r^2+u_\theta^2)dsd\tau+\int_{t_1}^{t_2}\bar{P}d\tau+\left|\int_{t_1}^{t_2}G(R,\tau)d\tau\right|\\
&:=\sum_{i=1}^{4}J_i.
\end{split}
\end{align}
Let \(\delta = C(\mu,\gamma)R_T^{-\frac{1}{2}}\), where \(C(\mu,\gamma)\) is the constant from Proposition \ref{prop u}. The H\"older inequality shows that
\begin{align*}
\norm{\xi(\tau)}_{L^\infty}&\leq \int_0^{R}|\rho u_r|ds\leq \int_0^{R}\left(s\rho |u_r|^{2+\delta}\right)^{\frac{1}{2+\delta}}\rho^{\frac{1+\delta}{2+\delta}}s^{-\frac{1}{2+\delta}}ds\\
&\leq C\norm{\rho(\tau)}_{L^\infty}^{\frac{1+\delta}{2+\delta}}\left(\int_0^{R} s^{-\frac{1}{1+\delta}}ds\right)^{\frac{1+\delta}{2+\delta}}\leq CR_T^{\frac{1+\delta}{2+\delta}}\delta^{-\frac{1+\delta}{2+\delta}}\\
&\leq CR_T^{\frac{3}{2}\frac{1+\delta}{2+\delta}}\leq CR_T^{\frac{9}{10}},
\end{align*}
where we used $C(\mu,\gamma)\leq \frac{1}{2}$. Then we get
\begin{align}\label{J_1}
J_1\leq CR_T^{\frac{9}{10}}.
\end{align}
Note that 
\begin{align*}
|\nabla \mathbf{u}|^2=(\partial_r u_r)^2+(\partial_r u_\theta)^2+\frac{u_r^2}{r^2}+\frac{u_\theta^2}{r^2}.
\end{align*}
Then, from the energy equality \eqref{energy estimate}, we obtain
\begin{align}\label{J_2}
J_2\leq C\int_{0}^{t}\norm{\rho(\tau)}_{L^\infty}\norm{\nabla \mathbf{u}}_{L^2}^2d\tau,
\end{align}
and 
\begin{align}\label{J_3}
J_3\leq C(t_2-t_1).
\end{align}
Proposition \ref{G boundary} shows 
\begin{align}\label{J_4}
J_4\leq C.
\end{align}
Therefore, one has
\begin{align}
|h(t_2)-h(t_1)|\leq C+CR_T^{\frac{9}{10}}+C\int_{0}^{t}\norm{\rho(\tau)}_{L^\infty}\norm{\nabla \mathbf{u}}_{L^2}^2d\tau+C(t_2-t_1).
\end{align}
By employing the Zlotnik inequality, we obtain
\begin{align*}
\theta(t,x(t))\leq C+CR_T^{\frac{9}{10}}+C\int_{0}^{t}\norm{\rho(\tau)}_{L^\infty}\norm{\nabla \mathbf{u}}_{L^2}^2d\tau.
\end{align*}
Taking the supremum over space yields
\begin{align*}
\norm{\rho(t)}_{L^\infty}\leq C+CR_T^{\frac{9}{10}}+C\int_{0}^{t}\norm{\rho(\tau)}_{L^\infty}\norm{\nabla \mathbf{u}}_{L^2}^2d\tau.
\end{align*}
By using the Gronwall inequality, we derive
\begin{align*}
\norm{\rho(t)}_{L^\infty}\leq C+CR_T^{\frac{9}{10}}.
\end{align*}
Taking the supremum over time gives
\begin{align*}
R_T\leq C+CR_T^{\frac{9}{10}},
\end{align*}
which, combined with Young's inequality, completes the proof.
\end{proof}

\subsection{Uniform upper bound of small density for $0<\beta<1$}
In this subsection, we aim to prove that under condition $0<\beta<1$, when the \(L^\infty\) norm of the initial density is sufficiently small, the local strong solution \(\rho\) has a time-independent uniform upper bound. Moreover, we provide a precise bound of how small the initial density needs to be. We define
\begin{align*}
\tilde{R}_T=\sup_{0\le t\le T}\norm{\rho(t)}_{L^\infty}.
\end{align*}
The following Proposition shows that when \(\tilde{R}_T\leq C(\mu,\beta)\), the higher integrability of the velocity field has a time-independent bound and it can be controlled by \(\rho_0\).
\begin{prop}\label{u high2}
Suppose that 
\begin{align}\label{tilde RT}
\tilde{R}_T\leq (7\mu)^{\frac{1}{\beta}},
\end{align}
then
\begin{align}\label{rho u3}
\sup_{0\leq t\leq T}\int_\Omega\rho |\mathbf{u}|^3dx\leq \int_\Omega\rho_0|\mathbf{u}_0|^3dx+3\sqrt{2}7^{\frac{\gamma}{\beta}}\mu^{\frac{\gamma}{\beta}-1} RE_0.
\end{align}
\end{prop}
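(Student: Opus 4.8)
The plan is to run a weighted energy estimate for $\int_\Omega\rho|\mathbf{u}|^3\,dx$, i.e.\ to test the momentum equation $\eqref{Equ1}_2$ with $3|\mathbf{u}|\mathbf{u}$. Using the continuity equation $\eqref{Equ1}_1$, the $\partial_t$ and convection terms recombine as $\int_\Omega 3|\mathbf{u}|\mathbf{u}\cdot\rho\dot{\mathbf{u}}\,dx=\frac{d}{dt}\int_\Omega\rho|\mathbf{u}|^3\,dx$; since $|\mathbf{u}|\mathbf{u}$ vanishes on $\partial\Omega$ the viscous and pressure terms may be integrated by parts, and with the radial identities $\nabla|\mathbf{u}|=\partial_r|\mathbf{u}|\,\mathbf{e_r}$, $\div(|\mathbf{u}|\mathbf{u})=u_r\partial_r|\mathbf{u}|+|\mathbf{u}|\div\mathbf{u}$ and $u_i\partial_j u_i=|\mathbf{u}|\partial_j|\mathbf{u}|$ one arrives at
\begin{align*}
&\frac{d}{dt}\int_\Omega\rho|\mathbf{u}|^3\,dx+3\mu\int_\Omega|\mathbf{u}|\big(|\nabla\mathbf{u}|^2+|\nabla|\mathbf{u}||^2\big)\,dx+3\int_\Omega(\mu+\lambda)|\mathbf{u}||\div\mathbf{u}|^2\,dx\\
&\qquad=-3\int_\Omega(\mu+\lambda)\,\div\mathbf{u}\,(u_r\partial_r|\mathbf{u}|)\,dx+3\int_\Omega P\,\div(|\mathbf{u}|\mathbf{u})\,dx.
\end{align*}

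The decisive step is the absorption of the sign-indefinite first term on the right, and this is exactly where the threshold $(7\mu)^{1/\beta}$ in \eqref{tilde RT} is forced. Estimating that term by $3\int_\Omega(\mu+\lambda)|\div\mathbf{u}||\mathbf{u}||\nabla|\mathbf{u}||\,dx$ and applying Young's inequality with a free parameter $t$, it splits into $\tfrac{3t}{2}\int_\Omega(\mu+\lambda)|\mathbf{u}||\div\mathbf{u}|^2+\tfrac{3}{2t}\int_\Omega(\mu+\lambda)|\mathbf{u}||\nabla|\mathbf{u}||^2$. The first piece is absorbed by the good $|\div\mathbf{u}|^2$-term once $t\le2$; for the second, using the pointwise bound $|\nabla|\mathbf{u}||\le|\nabla\mathbf{u}|$ one observes that the Laplacian dissipation satisfies $3\mu\int_\Omega|\mathbf{u}|(|\nabla\mathbf{u}|^2+|\nabla|\mathbf{u}||^2)\ge6\mu\int_\Omega|\mathbf{u}||\nabla|\mathbf{u}||^2$, so it is absorbed provided $\tfrac{3}{2t}(\mu+\lambda)\le6\mu$, i.e.\ $t\ge(\mu+\lambda)/(4\mu)$. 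The two conditions are simultaneously satisfiable precisely when $\mu+\lambda\le8\mu$, i.e.\ $\lambda=\rho^\beta\le\tilde R_T^\beta\le7\mu$. Discarding the remaining nonnegative dissipation then yields $\frac{d}{dt}\int_\Omega\rho|\mathbf{u}|^3\,dx\le3\int_\Omega P\,\div(|\mathbf{u}|\mathbf{u})\,dx$.

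For the surviving pressure term I would use the uniform bound $P=\rho^\gamma\le\tilde R_T^\gamma\le(7\mu)^{\gamma/\beta}$, the pointwise inequality $|\div(|\mathbf{u}|\mathbf{u})|\le|\mathbf{u}|(|\nabla|\mathbf{u}||+|\div\mathbf{u}|)$, Hölder's inequality, the elementary bounds $\|\nabla|\mathbf{u}|\|_{L^2}\le\|\nabla\mathbf{u}\|_{L^2}$ and $\|\div\mathbf{u}\|_{L^2}\le\|\nabla\mathbf{u}\|_{L^2}$, and Lemma \ref{u inf} ($\|\mathbf{u}\|_{L^\infty}\le(2\pi)^{-1/2}\|\nabla\mathbf{u}\|_{L^2}$) together with $|\Omega|=\pi R^2$; this produces $3\int_\Omega P\,\div(|\mathbf{u}|\mathbf{u})\,dx\le3\sqrt2\,(7\mu)^{\gamma/\beta}R\,\|\nabla\mathbf{u}\|_{L^2}^2$, the factor $\sqrt2$ being $2(2\pi)^{-1/2}|\Omega|^{1/2}/R$. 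Integrating in time and invoking the energy identity \eqref{energy estimate} in the form $\int_0^T\mu\|\nabla\mathbf{u}\|_{L^2}^2\,d\tau\le E(0)=E_0$ then gives exactly
\begin{align*}
\sup_{0\le t\le T}\int_\Omega\rho|\mathbf{u}|^3\,dx\le\int_\Omega\rho_0|\mathbf{u}_0|^3\,dx+3\sqrt{2}\,7^{\frac{\gamma}{\beta}}\mu^{\frac{\gamma}{\beta}-1}R\,E_0.
\end{align*}

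The main obstacle is the absorption in the second step: one must exploit the "doubling" $|\nabla|\mathbf{u}||\le|\nabla\mathbf{u}|$ of the Laplacian dissipation — a cruder split of the cross term forces the stronger restriction $\lambda\le3\mu$, which would not match the constant $(7\mu)^{1/\beta}$ needed later in Proposition \ref{Tilda R_T} — and the numerical constants must be tracked carefully so that the final bound is precisely the stated one. A routine technical point is that vacuum and zeros of $\mathbf{u}$ require first carrying out the argument with $\sqrt{|\mathbf{u}|^2+\varepsilon}$ in place of $|\mathbf{u}|$ and letting $\varepsilon\to0$; the $|\mathbf{u}|$- and $\mu$-weights keep every integrand integrable throughout.
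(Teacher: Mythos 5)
Your proposal is correct and follows essentially the same route as the paper: test the momentum equation with $3|\mathbf{u}|\mathbf{u}$, split the cross term $3\int(\mu+\lambda)|\mathbf{u}||\div\mathbf{u}||\nabla|\mathbf{u}||$ by Young's inequality so that the $|\div\mathbf{u}|^2$ piece cancels exactly and the $|\nabla|\mathbf{u}||^2$ piece is absorbed by the doubled Laplacian dissipation $6\mu\int|\mathbf{u}||\nabla|\mathbf{u}||^2$, which forces $\mu+\lambda\le 8\mu$ and hence $\tilde R_T\le(7\mu)^{1/\beta}$; the remaining pressure term is then bounded via Lemma \ref{u inf} and integrated using \eqref{energy estimate}. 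The paper simply fixes the Young parameter at its optimal value ($ab\le a^2+\tfrac14 b^2$, i.e.\ your $t=2$) rather than carrying a free parameter, but the constants, the absorption mechanism, and the final bound coincide exactly with yours.
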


\begin{proof}
Multiplying equation $\eqref{Equ1}_2$ by $3\mathbf{u}|\mathbf{u}|$ gives
\begin{align*}
\begin{split}
&\quad\frac{d}{dt}\int_\Omega\rho |\mathbf{u}|^3dx+3\mu\int_\Omega\left(|\mathbf{u}||\nabla|\mathbf{u}||^2+|\mathbf{u}||\nabla \mathbf{u}|^2\right)dx+3\int_{\Omega}(\mu+\lambda)|\mathbf{u}||\div \mathbf{u}|^2dx\\
&\leq3\int_{\Omega}P\div(|\mathbf{u}|\mathbf{u})dx+ 3\int_{\Omega}(\mu+\lambda)|\mathbf{u}||\div\mathbf{u}||\nabla|\mathbf{u}||dx\\
&\leq 3\tilde{R}_T^\gamma\int_\Omega |\mathbf{u}|(|\div\mathbf{u}|+|\nabla|\mathbf{u}||)dx+3\int_\Omega(\mu+\lambda)|\mathbf{u}||\div\mathbf{u}|^2dx+\frac{3}{4}\int_\Omega(\mu+\lambda)|\mathbf{u}||\nabla|\mathbf{u}||^2dx.
\end{split}
\end{align*} 
Note that \eqref{tilde RT} shows that
\begin{align*}
\frac{3}{4}(\mu+\lambda)\leq 6\mu.
\end{align*}
Therefore, direct calculations imply that
\begin{align*}
\frac{d}{dt}\int_\Omega\rho |\mathbf{u}|^3dx&\leq 3\tilde{R}_T^\gamma\int_\Omega |\mathbf{u}|(|\div\mathbf{u}|+|\nabla|\mathbf{u}||)dx\\
&\leq 6(\pi R^2)^{\frac{1}{2}}\tilde{R}_T^\gamma\norm{\mathbf{u}}_{L^\infty}\norm{\nabla \mathbf{u}}_{L^2}\\
&\leq 3\sqrt{2}R\tilde{R}_T^\gamma\norm{\nabla \mathbf{u}}_{L^2}^2\\
&\leq 3\sqrt{2}(7\mu)^{\frac{\gamma}{\beta}}R\norm{\nabla \mathbf{u}}_{L^2}^2,
\end{align*}
where we used \eqref{u infty} and the following simple fact 
\begin{align*}
|\nabla|\mathbf{u}||\leq |\nabla \mathbf{u}|,\quad \norm{\div \mathbf{u}}_{L^2}\leq \norm{\nabla \mathbf{u}}_{L^2}.
\end{align*}
Integrating with respect to time, we obtain from the energy equality \eqref{energy estimate},
\begin{align*}
\sup_{0\leq t\leq T}\int_\Omega\rho |\mathbf{u}|^3dx\leq \int_\Omega\rho_0|\mathbf{u}_0|^3dx+3\sqrt{2}7^{\frac{\gamma}{\beta}}\mu^{\frac{\gamma}{\beta}-1} RE_0.
\end{align*}
\end{proof}
With the help of the Proposition \ref{u high2}, we have the following conclusion.
\begin{prop}\label{Tilda R_T}
Under the assumptions of Theorem \ref{Thm3}, we have
\begin{align}\label{tilde RT2}
\tilde{R}_T\leq \frac{99}{100}(7\mu)^{\frac{1}{\beta}}.
\end{align}
\end{prop}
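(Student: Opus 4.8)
The proof will run a continuity argument that bootstraps the a priori restriction \eqref{tilde RT} of Proposition \ref{u high2}. Since $\rho\in C([0,T];W^{1,q})\hookrightarrow C([0,T];L^\infty)$, the function $T'\mapsto\tilde R_{T'}$ is nondecreasing and continuous, with $\tilde R_0=\norm{\rho_0}_{L^\infty}\le a_0$. Moreover $a_0<\tfrac{99}{100}(7\mu)^{1/\beta}$: every term added to $\theta(a)$ in the definition of $K(a)$ is strictly positive, so $\theta(a_0)<K(a_0)=\theta\bigl(\tfrac{99}{100}(7\mu)^{1/\beta}\bigr)$ and the claim follows since $\theta$ is increasing. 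Thus $S:=\{T'\in[0,T]:\tilde R_{T'}\le (7\mu)^{1/\beta}\}$ is a nonempty closed subinterval of $[0,T]$ containing a neighbourhood of $0$; let $T_1$ be its right endpoint. The plan is to show that on $[0,T_1]$ one actually has $\tilde R_{T_1}\le\tfrac{99}{100}(7\mu)^{1/\beta}<(7\mu)^{1/\beta}$; by continuity this forces $T_1=T$, which is \eqref{tilde RT2}.

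On $[0,T_1]$ the hypothesis \eqref{tilde RT} holds, so Proposition \ref{u high2} is available. Along a particle trajectory $x(t)$ the renormalized structure \eqref{Flow  structure} gives $\tfrac{D}{Dt}(\theta+\xi)+\int_R^r\tfrac{\rho}{s}(u_r^2-u_\theta^2)\,ds+P-\bar P+G(R,t)=0$, so I apply the Zlotnik inequality (Lemma \ref{Zlotnik}) with $y(t)=\theta(\rho(t,x(t)))$, $g(y)=-P(\theta^{-1}(y))=-(\theta^{-1}(y))^\gamma$ — which satisfies $g(\infty)=-\infty$ — and $h$ the primitive of the remaining terms. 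Then $h(t_2)-h(t_1)$ is the difference $-\xi(t_2)+\xi(t_1)$, minus $\int_{t_1}^{t_2}\!\int_r^R\tfrac{\rho}{s}(u_r^2-u_\theta^2)\,ds\,d\tau$, plus $\int_{t_1}^{t_2}\bar P\,d\tau$, minus $\int_{t_1}^{t_2}G(R,\tau)\,d\tau$, and I must bound this above by $N_0+N_1(t_2-t_1)$.

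Three pieces contribute to $N_0$: (a) $-\xi(t_2)+\xi(t_1)\le 2\norm{\xi}_{L^\infty L^\infty}$, where Proposition \ref{u high2} together with the H\"older splitting $\rho|u_r|=(s\rho|u_r|^3)^{1/3}\rho^{2/3}s^{-1/3}$ (exponents $3,\tfrac32$) gives $\norm{\xi}_{L^\infty}\le (2\pi)^{-1/3}2^{2/3}R^{1/3}\tilde R_{T_1}^{2/3}\bigl(\int_\Omega\rho|\mathbf{u}|^3\,dx\bigr)^{1/3}$ and $\int_\Omega\rho|\mathbf{u}|^3\,dx\le\int_\Omega\rho_0|\mathbf{u}_0|^3\,dx+3\sqrt2\,7^{\gamma/\beta}\mu^{\gamma/\beta-1}RE_0$; (b) $-\int_{t_1}^{t_2}\!\int_r^R\tfrac{\rho}{s}(u_r^2-u_\theta^2)\,ds\,d\tau\le\int_{t_1}^{t_2}\!\int_r^R\tfrac{\rho}{s}u_r^2\,ds\,d\tau\le\tfrac{\tilde R_{T_1}}{2\mu\pi}E_0$, using $\int_0^R\tfrac{u_r^2}{s}\,ds=\tfrac1{2\pi}\norm{\tfrac{u_r}{r}}_{L^2}^2\le\tfrac1{2\pi}\norm{\div\mathbf{u}}_{L^2}^2$ and the energy equality \eqref{energy estimate}; (c) $\bigl|\int_{t_1}^{t_2}G(R,\tau)\,d\tau\bigr|$, estimated by integrating the boundary identity \eqref{FBT} over $(t_1,t_2)$ and treating the three resulting terms with mass conservation $M(t)\equiv M_0$, energy decay $E(t)\le E_0$, Lemma \ref{u inf}, the identity $\int_\Omega\rho^\beta\div\mathbf{u}\,dx=\tfrac1{1-\beta}\tfrac{d}{dt}\int_\Omega\rho^\beta\,dx$ and $\int_\Omega\rho^\beta\,dx\le(\pi R^2)^{1-\beta}M_0^\beta$, yielding $\tfrac{M_0+2E_0}{2\pi R}+\tfrac{2}{(\pi R^2)^\beta(1-\beta)}M_0^\beta+\tfrac{M_0E_0}{4\mu\pi^2R^2}$. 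The only term producing $N_1$ is $\int_{t_1}^{t_2}\bar P\,d\tau\le\tfrac{(\gamma-1)E_0}{\pi R^2}(t_2-t_1)$ (using $\int_\Omega\rho^\gamma\,dx\le(\gamma-1)E_0$), so $N_1=\tfrac{(\gamma-1)E_0}{\pi R^2}$ and the Zlotnik threshold is $\tilde\zeta=\theta\bigl((\tfrac{(\gamma-1)E_0}{\pi R^2})^{1/\gamma}\bigr)$. Lemma \ref{Zlotnik} then gives $\theta(\rho(t,x(t)))\le\max\{\theta(\rho_0(x(0))),\tilde\zeta\}+N_0$; taking the supremum over $x\in\Omega$ and $t\le T_1$, bounding $\tilde R_{T_1}\le(7\mu)^{1/\beta}$ wherever it occurs, and replacing $M_0\le\tilde M(\norm{\rho_0}_{L^\infty})$, $E_0\le\tilde E(\norm{\rho_0}_{L^\infty})$, $\int_\Omega\rho_0|\mathbf{u}_0|^3\,dx\le\tilde U(\norm{\rho_0}_{L^\infty})$ (all from Lemma \ref{u inf} and monotone in $\norm{\rho_0}_{L^\infty}$) turns the right-hand side into exactly $K(\norm{\rho_0}_{L^\infty})$. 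Hence $\theta(\tilde R_{T_1})\le K(\norm{\rho_0}_{L^\infty})\le K(a_0)=\theta\bigl(\tfrac{99}{100}(7\mu)^{1/\beta}\bigr)$, so $\tilde R_{T_1}\le\tfrac{99}{100}(7\mu)^{1/\beta}$ by monotonicity of $\theta$, closing the argument.

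The essential difficulty is bookkeeping: every constant in $\norm{\xi}_{L^\infty L^\infty}$, in the boundary-term estimate, and in the $\int\tfrac{\rho}{s}u_r^2$ term must be tracked so that, after passing to the majorants $\tilde M,\tilde E,\tilde U$ evaluated at $\norm{\rho_0}_{L^\infty}$, the Zlotnik output is literally the function $K$ rather than $C\cdot(\cdots)$ — this is precisely what makes the smallness threshold $a_0$ explicit and computable. The only delicate point in the bootstrap itself is that the improved bound $\tfrac{99}{100}(7\mu)^{1/\beta}$ is \emph{strictly} below the hypothesis $(7\mu)^{1/\beta}$, so the factor $\tfrac{99}{100}$ is exactly the margin needed for the continuity argument to close; one should also record that $g(\infty)=-\infty$ and that at each time the particle-path map covers $\Omega$, so the Lagrangian bound transfers to $\sup_x\rho(t,x)$.
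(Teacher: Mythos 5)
Your argument matches the paper's essentially step for step: apply Zlotnik's inequality (Lemma~\ref{Zlotnik}) to the renormalized structure \eqref{Flow  structure} with $y=\theta$, $g=-P(\theta^{-1})$, bound the additive part $h$ by the four pieces ($2\norm{\xi}_{L^\infty L^\infty}$ via Proposition~\ref{u high2}, the $\rho u_r^2/s$ term via the energy identity, $\int\bar P$ giving $N_1$, and the boundary flux via \eqref{FBT}), and then track all constants so the Zlotnik output is literally $K(\norm{\rho_0}_{L^\infty})\le K(a_0)=\theta\bigl(\tfrac{99}{100}(7\mu)^{1/\beta}\bigr)$. The one substantive difference is that you embed the continuity/bootstrap argument (defining $T_1=\sup\{T':\tilde R_{T'}\le(7\mu)^{1/\beta}\}$, observing $a_0<\tfrac{99}{100}(7\mu)^{1/\beta}$, and showing the bound improves strictly below the threshold) directly into the proposition, whereas the paper uses \eqref{rho u3} without restating its hypothesis \eqref{tilde RT} and only closes the loop in Section~4.2 via the first blow-up time $\tilde T$; your version is self-contained and arguably clearer on this point. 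One small slip: from \eqref{Flow  structure} one has $h(t_2)-h(t_1)= -\xi(t_2)+\xi(t_1)\,\textbf{+}\,\int_{t_1}^{t_2}\!\int_r^R\tfrac{\rho}{s}(u_r^2-u_\theta^2)\,ds\,d\tau+\int_{t_1}^{t_2}\bar P\,d\tau-\int_{t_1}^{t_2}G(R,\tau)\,d\tau$ (plus, not minus, in front of the double integral, since $-\int_R^r=\int_r^R$); with that fixed your item (b) reads $\int_r^R\tfrac{\rho}{s}(u_r^2-u_\theta^2)\,ds\le\int_r^R\tfrac{\rho}{s}u_r^2\,ds$, which is precisely the sign-preserving property of the swirl that the introduction highlights, and the resulting constant $\tfrac{1}{2\mu\pi}(7\mu)^{1/\beta}E_0$ agrees with the paper.
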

\begin{proof}
Recall the transport structure
\begin{align}
\frac{D}{Dt}(\theta+\xi)+\int_R^r\frac{\rho }{s}(u_r^2-u_\theta^2)ds+P-\bar{P}+G(R,t)=0.
\end{align}
Similarly to Proposition \ref{uniform bound1}, we define
\begin{align*}
y(t)&=\theta(t,x(t)),\quad g(y)=-P(\theta^{-1}(y)),\\
h(t)&=-\xi(t,x(t))+\int_0^t\int_r^R\frac{\rho }{s}(u_r^2-u_\theta^2)dsd\tau+\int_0^t\bar{P}d\tau-\int_0^tG(R,\tau)d\tau.
\end{align*}
Therefore, for any $0<t_1<t_2<T$, we have
\begin{align*}
|h(t_2)-h(t_1)|&\leq 2\norm{\xi}_{L^\infty L^\infty}+\int_{t_1}^{t_2}\int_r^R\frac{\rho }{s}(u_r^2+u_\theta^2)dsd\tau+\int_{t_1}^{t_2}\bar{P}d\tau+\left|\int_{t_1}^{t_2}G(R,\tau)d\tau\right|\\
&:=\sum_{i=1}^{4}L_i.
\end{align*}
By using \eqref{rho u3}, one has
\begin{align*}
\norm{\xi(t)}_{L^\infty}&\leq \int_0^R\rho |u_r|ds= \int_0^R(\rho|u_r|^3s)^{\frac{1}{3}}\rho^{\frac{2}{3}}s^{-\frac{1}{3}}ds\\
&\leq \sup_{0\leq t\leq T}\left(\int_0^R \rho|u_r|^3sds\right)^{\frac{1}{3}}\tilde{R}_T^{\frac{2}{3}}\left(\int_0^R s^{-\frac{1}{2}}ds\right)^{\frac{2}{3}}\\
&\leq (2R^{\frac{1}{2}})^{\frac{2}{3}}(2\pi)^{-\frac{1}{3}}\tilde{R}_T^{\frac{2}{3}}\sup_{0\leq t\leq T}\left(\int_\Omega\rho |\mathbf{u}|^3dx\right)^{\frac{1}{3}}\\
&\leq (2R)^{\frac{1}{3}}\pi^{-\frac{1}{3}}(7\mu)^{\frac{2}{3\beta}}\left(\int_\Omega\rho_0|\mathbf{u}_0|^3dx+3\sqrt{2}7^{\frac{\gamma}{\beta}}\mu^{\frac{\gamma}{\beta}-1} RE_0\right)^{\frac{1}{3}}.
\end{align*}
Therefore, we obtain the estimate for \(L_1\),
\begin{align}\label{L1}
L_1\leq 2^{\frac{4}{3}}R^{\frac{1}{3}}\pi^{-\frac{1}{3}}(7\mu)^{\frac{2}{3\beta}}\left(\int_\Omega\rho_0|\mathbf{u}_0|^3dx+3\sqrt{2}7^{\frac{\gamma}{\beta}}\mu^{\frac{\gamma}{\beta}-1} RE_0\right)^{\frac{1}{3}}.
\end{align}
By using the energy equality $\eqref{energy estimate}$, we obtain
\begin{align}\label{L2}
L_2\leq \tilde{R}_T\int_{t_1}^{t_2}\int_0^R\frac{u_r^2+u_\theta^2}{s}dsd\tau\leq\frac{1}{2\pi}\tilde{R}_T \int_0^T\norm{\nabla \mathbf{u}}_{L^2}^2d\tau\leq \frac{1}{2 \mu\pi}(7\mu)^{\frac{1}{\beta}}E_0,
\end{align}
where we used the following simple fact
\begin{align*}
|\nabla \mathbf{u}|^2=(\partial_r u_r)^2+(\partial_r u_\theta)^2+\frac{u_r^2}{r^2}+\frac{u_\theta^2}{r^2}.
\end{align*}
The energy inequality \eqref{energy estimate} shows that
\begin{align}\label{L4}
L_3\leq \frac{(\gamma-1)E_0}{\pi R^2}(t_2-t_1).
\end{align}
Finally, we estimate the boundary term of the effective viscous flux,
\begin{align*}
L_4&=\left|\int_{t_1}^{t_2}G(R,\tau)d\tau\right|\\
&\leq\frac{1}{R^2}\left|\int_0^R\rho u_r r^2dr(t_2)-\int_0^R\rho u_r r^2dr(t_1)\right|+\left|\frac{1}{\pi R^2}\int_{t_1}^{t_2}\int_\Omega \rho^\beta \div\mathbf{u}dxd\tau\right|\\
&\quad+\left|\frac{1}{R^2}\int_{t_1}^{t_2}\int_0^R\rho (u_r^2+u_\theta^2)rdrdt\right|\\
&:=L_4^1+L_4^2+L_4^3.
\end{align*}
By mass conservation and the energy equality \eqref{energy estimate}, we obtain
\begin{align*}
L_4^1\leq \frac{2}{R}\sup_{0\leq t\leq T}\int_0^R \frac{\rho r+\rho u_r^2r}{2}dr\leq \frac{1}{2\pi R}(M_0+2E_0).
\end{align*}
The H\"{o}lder inequality shows that
\begin{align*}
L_4^2&= \frac{1}{\pi R^2}\left|\int_{t_1}^{t_2}\int_\Omega \rho^\beta \div\mathbf{u} dxd\tau\right|=\frac{1}{\pi R^2(1-\beta)}\left|\int_{t_1}^{t_2}\frac{d}{dt}\int_\Omega\rho^\beta dxd\tau\right|\\
&\leq \frac{2}{\pi R^2(1-\beta)}\sup_{0\leq t\leq T}\norm{\rho^\beta}_{L^{1/\beta}}\norm{1}_{L^{1/(1-\beta)}}\leq \frac{2}{(\pi R^2)^\beta(1-\beta)}M_0^\beta.
\end{align*}
Direct calculations show that
\begin{align*}
L_4^3&\leq \frac{1}{ R^2}\left|\int_{t_1}^{t_2}\norm{\mathbf{u}}_{L^\infty}^2\int_0^R\rho rdrdt\right|\leq \frac{M_0}{2\pi R^2}\int_0^T\norm{\mathbf{u}}_{L^\infty}^2dt\\
&\leq \frac{M_0}{4\pi^2R^2}\int_0^T\norm{\nabla\mathbf{u}}_{L^2}^2dt\leq \frac{M_0 E_0}{4\mu\pi^2 R^2}.
\end{align*}
Therefore, we obtain the estimate for \(L_4\),
\begin{align}\label{L3}
L_4\leq \frac{1}{2\pi R}(M_0+2E_0)+\frac{2}{(\pi R^2)^\beta(1-\beta)}M_0^\beta+\frac{M_0 E_0}{4\mu\pi^2 R^2}.
\end{align}
Combining \eqref{L1}-\eqref{L3}, we calculate to obtain \(N_0\) and \(N_1\) in Lemma \ref{Zlotnik},
\begin{align*}
N_0&=2^{\frac{4}{3}}R^{\frac{1}{3}}\pi^{-\frac{1}{3}}(7\mu)^{\frac{2}{3\beta}}\left(\int_\Omega\rho_0|\mathbf{u}_0|^3dx+3\sqrt{2}7^{\frac{\gamma}{\beta}}\mu^{\frac{\gamma}{\beta}-1} RE_0\right)^{\frac{1}{3}}+\frac{1}{2 \mu\pi}(7\mu)^{\frac{1}{\beta}}E_0\\
&\quad+\frac{1}{2\pi R}(M_0+2E_0)+\frac{2}{(\pi R^2)^\beta(1-\beta)}M_0^\beta+\frac{M_0 E_0}{4\mu\pi^2 R^2},\\
N_1&=\frac{(\gamma-1)E_0}{\pi R^2}.
\end{align*}
Therefore, we use Lemma \ref{Zlotnik} to obtain
\begin{align*}
\theta(t,x(t))&\leq \max\left\{\theta(0,x_0),2\mu\log((\frac{(\gamma-1)E_0}{\pi R^2})^{\frac{1}{\gamma}})+\frac{1}{\beta}(\frac{(\gamma-1)E_0}{\pi R^2})^{\frac{\beta}{\gamma}}\right\}\\
&\quad+2^{\frac{4}{3}}R^{\frac{1}{3}}\pi^{-\frac{1}{3}}(7\mu)^{\frac{2}{3\beta}}\left(\int_\Omega\rho_0|\mathbf{u}_0|^3dx+3\sqrt{2}7^{\frac{\gamma}{\beta}}\mu^{\frac{\gamma}{\beta}-1} RE_0\right)^{\frac{1}{3}}+\frac{1}{2 \mu\pi}(7\mu)^{\frac{1}{\beta}}E_0\\
&\quad+\frac{1}{2\pi R}(M_0+2E_0)+\frac{2}{(\pi R^2)^\beta(1-\beta)}M_0^\beta+\frac{M_0 E_0}{4\mu\pi^2 R^2}.
\end{align*}
Taking the supremum in time and space yields
\begin{align*}
\begin{split}
2\mu\log \tilde{R}_T+\frac{1}{\beta}\tilde{R}_T^\beta&\leq \max\left\{2\mu\log\norm{\rho_0}_{L^\infty}+\frac{1}{\beta}\norm{\rho_0}_{L^\infty}^\beta,2\mu\log((\frac{(\gamma-1)E_0}{\pi R^2})^{\frac{1}{\gamma}})+\frac{1}{\beta}(\frac{(\gamma-1)E_0}{\pi R^2})^{\frac{\beta}{\gamma}}\right\}\\
&\quad+2^{\frac{4}{3}}R^{\frac{1}{3}}\pi^{-\frac{1}{3}}(7\mu)^{\frac{2}{3\beta}}\left(\int_\Omega\rho_0|\mathbf{u}_0|^3dx+3\sqrt{2}7^{\frac{\gamma}{\beta}}\mu^{\frac{\gamma}{\beta}-1} RE_0\right)^{\frac{1}{3}}\\
&\quad+\frac{1}{2 \mu\pi}(7\mu)^{\frac{1}{\beta}}E_0+\frac{1}{2\pi R}(M_0+2E_0)+\frac{2}{(\pi R^2)^\beta(1-\beta)}M_0^\beta+\frac{M_0 E_0}{4\mu\pi^2 R^2}.\\
\end{split}
\end{align*}
By using \eqref{u infty}, we note the following simple fact :
\begin{align*}
\begin{split}
M_0&\leq \pi R^2\norm{\rho_0}_{L^\infty},\\
E_0&=\int_\Omega\frac{\rho_0^\gamma}{\gamma-1}+\frac{1}{2}\rho_0|\mathbf{u}_0|^2dx\leq \frac{\pi R^2}{\gamma-1}\norm{\rho_0}_{L^\infty}^\gamma+\frac{R^2}{4}\norm{\rho_0}_{L^\infty}\norm{\nabla \mathbf{u}_0}_{L^2}^2,\\
\int_\Omega\rho_0|\mathbf{u}_0|^3dx&\leq\norm{\rho_0}_{L^\infty}\frac{\pi R^2}{(2\pi)^{\frac{3}{2}}}\norm{\nabla\mathbf{u}_0}_{L^2}^3,
\end{split}
\end{align*}
which, combined with \eqref{def a_0}, completes the proof.
\end{proof}

\subsection{Higher order estimates}
To ensure that the local solution can be extended to a global solution, we need to use the upper bound of the density to close the estimates for higher order derivatives. Therefore, in this subsection, we use the ideas from \cite{2016 Huang-Li-JMPA} and \cite{2023 Huang-Su-Yan-Yu} to obtain the higher order estimates.

The following Proposition is a direct corollary of \eqref{the first structure}, we omit the proof here.
\begin{prop}\label{First}
Assume that 
\begin{align}
\sup_{0\le t\le T}\norm{\rho}_{L^\infty}\leq M,
\end{align}
for some positive constant $M>0$. Then there is a positive constant $C(M)$ depending only on $M,\mu,\beta,\gamma,R$ and initial data such that 
\begin{align}\label{first}
\sup_{0\leq t\leq T}\int_\Omega |\nabla\mathbf{u}|^2 dx+\int_0^T\int_\Omega\rho |\dot{\mathbf{u}}|^2dxdt\leq C(M).
\end{align}
\end{prop}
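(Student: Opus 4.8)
The assertion is a direct corollary of the structural estimate \eqref{the first structure} of Proposition \ref{suplog} (when $1\le\beta\le\gamma$ one may equally invoke Proposition \ref{new log}), once the density has been bounded. The plan is as follows. Under the hypothesis $\sup_{0\le t\le T}\norm{\rho}_{L^\infty}\le M$ we have $R_T=\sup_{0\le t\le T}\norm{\rho(t)}_{L^\infty}+1\le M+1$, so freezing any $\alpha\in(0,1)$ — say $\alpha=\tfrac12$ — in \eqref{the first structure} produces a constant $C(M)$, depending only on $M,\mu,\beta,\gamma,R$ and the initial data, with
\begin{align*}
\sup_{0\le t\le T}\log\bigl(e+A_1^2(t)+A_3^2(t)\bigr)+\int_0^T\frac{A_2^2(t)}{e+A_1^2(t)}\,dt\le C(M).
\end{align*}

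Next I would convert the logarithmic bound into a pointwise one: since $e+A_1^2(t)+A_3^2(t)\ge e$, the first term above forces $\sup_{0\le t\le T}\bigl(A_1^2(t)+A_3^2(t)\bigr)\le e^{C(M)}=:C(M)$. To pass from $A_3$ to $\norm{\nabla\mathbf{u}}_{L^2}$, I would use $2\mu+\lambda\ge\mu$ together with the elementary identity $\norm{\nabla\mathbf{u}}_{L^2}^2=\norm{\div\mathbf{u}}_{L^2}^2+\norm{\omega}_{L^2}^2$ valid for $\mathbf{u}\in H_0^1(\Omega)$, so that
\begin{align*}
A_3^2(t)=\int_\Omega\bigl((2\mu+\lambda)(\div\mathbf{u})^2+\mu\omega^2\bigr)dx\ge\mu\norm{\nabla\mathbf{u}(t)}_{L^2}^2,
\end{align*}
whence $\sup_{0\le t\le T}\norm{\nabla\mathbf{u}(t)}_{L^2}^2\le\mu^{-1}C(M)$. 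For the time-integrated term I would simply multiply and divide by $e+A_1^2$: since $e+A_1^2(t)\le C(M)$ for a.e.\ $t$,
\begin{align*}
\int_0^T\!\!\int_\Omega\rho|\dot{\mathbf{u}}|^2\,dx\,dt=\int_0^T A_2^2(t)\,dt=\int_0^T\bigl(e+A_1^2(t)\bigr)\frac{A_2^2(t)}{e+A_1^2(t)}\,dt\le C(M)\int_0^T\frac{A_2^2(t)}{e+A_1^2(t)}\,dt\le C(M),
\end{align*}
and adding the two bounds gives \eqref{first}.

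I do not expect a genuine obstacle here: the real work was already carried out in Proposition \ref{suplog}, and what remains is bookkeeping. The only point deserving a line of care is that the exponent $1+\kappa+\alpha\beta$ appearing in \eqref{the first structure} is harmless precisely because $R_T$ has been frozen at the $M$-dependent value $M+1$; without the a priori density bound this step would, of course, fail.
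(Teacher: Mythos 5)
Your proposal is correct and follows exactly the path the paper intends — the authors state Proposition \ref{First} is ``a direct corollary of \eqref{the first structure}'' and omit the proof, and what you wrote is precisely that omitted bookkeeping. You correctly freeze $R_T\le M+1$ to make the right-hand side of \eqref{the first structure} an $M$-dependent constant, exponentiate the logarithmic bound to control $A_1^2+A_3^2$ pointwise in time, use $A_3^2\ge\mu\norm{\nabla\mathbf{u}}_{L^2}^2$ (via $2\mu+\lambda\ge\mu$ and $\norm{\nabla\mathbf{u}}_{L^2}^2=\norm{\div\mathbf{u}}_{L^2}^2+\norm{\omega}_{L^2}^2$ on $H_0^1$), and recover $\int_0^T A_2^2\,dt$ from $\int_0^T A_2^2/(e+A_1^2)\,dt$ by the sup bound on $e+A_1^2$.
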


\begin{prop}\label{Second}
Under the assumptions of Proposition \ref{First}, There exists a  positive constant $C(M)$ depending only on $M,\mu,\beta,\gamma,R$ and initial data such that 
\begin{align}\label{second}
\sup_{0\leq t\leq T}\int_\Omega \rho|\dot{\mathbf{u}}|^2dx+\int_0^T\int_\Omega|\nabla\dot{\mathbf{u}}|^2dxdt\leq C(M).
\end{align}
\end{prop}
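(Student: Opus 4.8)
The plan is to differentiate the momentum equation along the flow and test with $\dot{\mathbf{u}}$, which lies in $H_0^1$ (since $\mathbf{u}|_{\partial\Omega}=0$ forces $\dot{\mathbf{u}}|_{\partial\Omega}=0$), so that every boundary term produced by integration by parts vanishes; in contrast to the third-order estimate, the lack of boundary data for $G$ and $\omega$ does not obstruct this step. Concretely, apply $\partial_t(\cdot)+\div(\mathbf{u}\,\cdot)$ to the $j$-th component of $\eqref{Equ1}_2$ and use the continuity equation to rewrite the left-hand side as $\rho\ddot u^j$, obtaining
\[
\rho\ddot u^j=\partial_t F^j+\div(\mathbf{u}\,F^j),\qquad F^j:=\mu\Delta u^j+\partial_j\big((\mu+\lambda)\div\mathbf{u}\big)-\partial_j P.
\]
Multiplying by $\dot u^j$, summing in $j$, integrating over $\Omega$ and integrating by parts, one arrives at
\[
\frac12\frac{d}{dt}\int_\Omega\rho|\dot{\mathbf{u}}|^2dx+\int_\Omega\Big(\mu|\nabla\dot{\mathbf{u}}|^2+(\mu+\lambda)(\div\dot{\mathbf{u}})^2\Big)dx=\sum_i R_i,
\]
where the $R_i$ are pressure commutators such as $\int_\Omega P\,\partial_k\dot u^j\,\partial_j u^k\,dx$ and $\int_\Omega(\gamma-1)P\div\mathbf{u}\div\dot{\mathbf{u}}\,dx$, viscous commutators of the schematic type $\int_\Omega(\mu+\lambda)|\nabla\mathbf{u}|^2|\nabla\dot{\mathbf{u}}|\,dx$, and further terms generated by $\dot\lambda=-\beta\lambda\div\mathbf{u}$ and by $\mathbf{u}\cdot\nabla\lambda$ arising from the density-dependent bulk viscosity.

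Next I would estimate each $R_i$. Absorb a small multiple of the dissipation $\int_\Omega(\mu|\nabla\dot{\mathbf{u}}|^2+(\mu+\lambda)(\div\dot{\mathbf{u}})^2)\,dx$ into the left-hand side; the leftover factors are powers of $\norm{\nabla\mathbf{u}}_{L^4}$ and $\norm{\nabla\mathbf{u}}_{L^2}$, together with $\norm{P}_{L^\infty}\le M^\gamma$. The crux is an $L^4$ bound for $\nabla\mathbf{u}$: writing $\div\mathbf{u}=(G+P-\bar P)/(2\mu+\lambda)$ with $0\le\rho\le M$, using $\int_\Omega\omega\,dx=0$ (valid because $\mathbf{u}\in H_0^1$) and the Gagliardo--Nirenberg inequality \eqref{GN1}, one gets
\[
\norm{\nabla\mathbf{u}}_{L^4}^2\le C(M)\big(\norm{G}_{L^2}\norm{\nabla G}_{L^2}+|\bar G|^2+\norm{\omega}_{L^2}\norm{\nabla\omega}_{L^2}+1\big);
\]
combining this with the radially symmetric bounds $\norm{\nabla G}_{L^2}\le\norm{\rho\dot{\mathbf{u}}}_{L^2}$ and $\mu\norm{\nabla^\perp\omega}_{L^2}\le\norm{\rho\dot{\mathbf{u}}}_{L^2}$ from \eqref{G,w}, with $\norm{G}_{L^2}^2+|\bar G|^2\le C(M)(\norm{\sqrt{2\mu+\lambda}\div\mathbf{u}}_{L^2}^2+\norm{\nabla\mathbf{u}}_{L^2}^2+1)$, and with $\norm{\rho\dot{\mathbf{u}}}_{L^2}\le\sqrt M\,\norm{\sqrt\rho\dot{\mathbf{u}}}_{L^2}$, leads to
\[
\norm{\nabla\mathbf{u}}_{L^4}^4\le C(M)\Big(1+\norm{\sqrt{2\mu+\lambda}\div\mathbf{u}}_{L^2}^2+\norm{\nabla\mathbf{u}}_{L^2}^2\Big)\int_\Omega\rho|\dot{\mathbf{u}}|^2dx+C(M)\norm{\nabla\mathbf{u}}_{L^2}^4.
\]
Inserting these bounds, the identity collapses to a Gr\"onwall-type inequality
\[
\frac{d}{dt}\int_\Omega\rho|\dot{\mathbf{u}}|^2dx+\mu\int_\Omega|\nabla\dot{\mathbf{u}}|^2dx\le\Phi(t)\int_\Omega\rho|\dot{\mathbf{u}}|^2dx+\Psi(t),
\]
with $\Phi=C(M)(1+\norm{\sqrt{2\mu+\lambda}\div\mathbf{u}}_{L^2}^2+\norm{\nabla\mathbf{u}}_{L^2}^2)$ and $\Psi=C(M)(1+\norm{\nabla\mathbf{u}}_{L^2}^2+\norm{\nabla\mathbf{u}}_{L^2}^4)$. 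The energy equality \eqref{energy estimate} and Proposition \ref{First} give $\int_0^T(\Phi+\Psi)\,dt<\infty$, the compatibility condition gives $\int_\Omega\rho_0|\dot{\mathbf{u}}(\cdot,0)|^2dx=\norm{g}_{L^2}^2<\infty$, and Gr\"onwall's inequality then yields \eqref{second}.

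The step I expect to be the main obstacle is handling the density-dependent bulk viscosity $\lambda=\rho^\beta$. Differentiating $\partial_j\big((\mu+\lambda)\div\mathbf{u}\big)$ in time and transporting it along the flow produces terms containing $\nabla\lambda=\beta\rho^{\beta-1}\nabla\rho$ and $\nabla\div\mathbf{u}$, neither of which can be controlled by quantities independent of the merely local $W^{1,q}$-regularity of $\rho$. Following the commutator bookkeeping of Huang--Li \cite{2016 Huang-Li-JMPA}, the resolution is that the dangerous contributions coming from $\partial_t\big[\partial_j((\mu+\lambda)\div\mathbf{u})\big]$ and from the convective term $\div\big(\mathbf{u}\,\partial_j((\mu+\lambda)\div\mathbf{u})\big)$ cancel after integration by parts (no boundary terms appear since $\mathbf{u}=0$ on $\partial\Omega$), leaving only admissible expressions of the form $\int_\Omega(\mu+\lambda)|\nabla\mathbf{u}|^2|\nabla\dot{\mathbf{u}}|\,dx$. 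The remaining, routine, difficulty is to arrange the space-$L^4$/time-$L^2$ pieces so that the Gr\"onwall coefficient $\Phi$ is integrable in $t$; this is precisely what the dissipation in \eqref{energy estimate} together with the first-order bound $\int_0^T\norm{\sqrt\rho\dot{\mathbf{u}}}_{L^2}^2\,dt\le C(M)$ from Proposition \ref{First} supply.
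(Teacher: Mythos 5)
Your proposal follows the paper's proof essentially verbatim: apply $\dot u^j[\partial_t+\div(\mathbf{u}\,\cdot)]$ to the momentum equation, test with $\dot{\mathbf{u}}$ (for which all boundary terms vanish since $\dot{\mathbf{u}}\in H_0^1$), control $\norm{\nabla\mathbf{u}}_{L^4}^4$ by decomposing into $G$ and $\omega$, invoking the radially symmetric bounds \eqref{G,w} together with Gagliardo--Nirenberg, and close using Proposition~\ref{First} and the energy equality. The only cosmetic difference is that the paper inserts $\sup_t\norm{\nabla\mathbf{u}}_{L^2}^2\le C(M)$ from \eqref{first} to bound $\norm{G}_{L^2}^2$ and $\norm{\omega}_{L^2}^2$ uniformly, turning the final step into a direct time-integration, whereas you keep these as a time-dependent Gr\"onwall coefficient $\Phi(t)$; both are correct.
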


\begin{proof}
Operating $\dot{u}^j[\partial/\partial t+\div(\mathbf{u}\cdot)]$ to $\eqref{Equ1}_2^j$ and summing over \(j\) gives
\begin{align}\label{rho dot u}
\begin{split}
&\left(\dfrac{1}{2}\int_\Omega\rho|\dot{\mathbf{u}}|^2dx\right)_t\\
&=-\int_\Omega\dot{u}^j[\partial_j P_t+\div(\partial_j P\mathbf{u})]dx+\mu\int_\Omega \dot{u}^j[\partial_t\Delta u^j+\div(\mathbf{u}\Delta u^j)]dx\\
&\quad+\int_\Omega \dot{u}^j[\partial_{jt}((\mu+\lambda)\div \mathbf{u})+\div (\mathbf{u}\partial_j((\mu+\lambda)\div \mathbf{u}))]dx\\
&:=\sum_{i=1}^3O_i.
\end{split}
\end{align}
Direct integration by parts and the Young's inequality show that
\begin{align}\label{O1}
\begin{split}
O_1&=-\int_\Omega \dot{u}^j[\partial_j P_t+\div(\partial_j P\mathbf{u})]dx\\
&=\int_\Omega\left[ -P'\rho\div \mathbf{u}\partial_j\dot{u}^j+\partial_k(\partial_j\dot{u}^ju^k)P-P\partial_j(\partial_k\dot{u}^ju^k)  \right]dx\\
&\leq C\norm{\nabla \mathbf{u}}_{L^2}\norm{\nabla \dot{\mathbf{u}}}_{L^2}\\
&\leq \frac{\mu}{8}\norm{\nabla \dot{\mathbf{u}}}_{L^2}^2+C\norm{\nabla \mathbf{u}}_{L^2}^2.
\end{split}
\end{align}
Similarly,
\begin{align}\label{O2}
\begin{split}
O_2&=\mu\int_\Omega \dot{u}^j[\partial_t\Delta u^j+\div(\mathbf{u}\Delta u^j)]dx\\
&=-\mu\int_\Omega(|\nabla\dot{\mathbf{u}}|^2+\partial_i\dot{u}^j\partial_ku^k\partial_iu^j-\partial_i\dot{u}^j\partial_iu^k\partial_ku^j-\partial_iu^j\partial_iu^k\partial_k\dot{u}^j)dx\\
&\leq -\dfrac{3\mu}{4}\int_\Omega|\nabla\dot{\mathbf{u}}|^2dx+C\int_\Omega|\nabla \mathbf{u}|^4 dx.
\end{split}
\end{align}
Finally, integration by parts lead to
\begin{align}\label{O3}
\begin{split}
O_3&=\int_\Omega \dot{u}^j[\partial_{jt}((\mu+\lambda)\div \mathbf{u})+\div (\mathbf{u}\partial_j((\mu+\lambda)\div \mathbf{u}))]dx\\
&=-\int_\Omega  \partial_j\dot{u}^j[((\mu+\lambda)\div \mathbf{u})_t+\div(\mathbf{u}(\mu+\lambda)\div \mathbf{u}))]dx\\
&\quad -\int_\Omega  \dot{u}^j\div(\partial_j\mathbf{u}(\mu+\lambda)\div \mathbf{u}))dx\\
&=-\int_\Omega \left(\frac{D}{Dt}\div \mathbf{u}+\partial_j u^i\partial_i u^j\right)\left[(\mu+\lambda)\frac{D}{Dt}\div \mathbf{u}+(\mu+(1-\beta)\lambda)(\div\mathbf{u})^2\right]dx\\
&\quad +\int_\Omega  \nabla\dot{u}^j\cdot\partial_j\mathbf{u}(\mu+\lambda)\div \mathbf{u}dx\\
&\leq -\dfrac{\mu}{2}\int_\Omega \left(\frac{D}{Dt}\div \mathbf{u}\right)^2dx+\dfrac{\mu}{8}\norm{\nabla \dot{\mathbf{u}}}_{L^2}^2+C\norm{\nabla \mathbf{u}}_{L^4}^4,
\end{split}
\end{align}
where we used the simple fact:
\begin{align*}
\begin{split}
&((\mu+\lambda)\div \mathbf{u})_t+\div(\mathbf{u}(\mu+\lambda)\div \mathbf{u}))\\
&=(\mu+\lambda)\div \mathbf{u}_t+(\mu+\lambda)(\mathbf{u}\cdot \nabla)\div \mathbf{u}+\lambda_t \div \mathbf{u}+(\mathbf{u}\cdot\nabla \lambda)\div \mathbf{u}+(\mu+\lambda)(\div \mathbf{u})^2\\
&=(\mu+\lambda)\frac{D}{Dt}\div \mathbf{u}+(\mu+(1-\beta)\lambda)(\div \mathbf{u})^2.
\end{split}
\end{align*}
Substituting the estimates for \(O_1\) to \(O_3\) into \eqref{rho dot u} gives
\begin{align}\label{rho dot u2}
\begin{split}
&\left(\int_\Omega\rho|\dot{\mathbf{u}}|^2dx\right)_t+\int_\Omega|\nabla\dot{\mathbf{u}}|^2dx+\int_\Omega\left(\frac{D}{Dt}\div \mathbf{u}\right)^2dx\\
&\leq C\norm{\nabla \mathbf{u}}_{L^4}^4+C\norm{\nabla \mathbf{u}}_{L^2}^2\\
&\leq C\norm{\div \mathbf{u}}_{L^4}^4+C\norm{w}_{L^4}^4+C\norm{\nabla \mathbf{u}}_{L^2}^2\\
&\leq C(\norm{G}_{L^4}^4+\norm{P-\bar{P}}_{L^4}^2+\norm{w}_{L^4}^4+\norm{\nabla \mathbf{u}}_{L^2}^2)\\
&\leq C(\norm{G}_{L^2}^2\norm{ G}_{H^1}^2+\norm{w}_{L^2}^2\norm{\nabla w}_{L^2}^2+\norm{P-\bar{P}}_{L^4}^2+\norm{\nabla \mathbf{u}}_{L^2}^2)\\
&\leq C\Vert \sqrt{\rho}\dot{\mathbf{u}}\Vert_{L^2}^2+C\norm{\nabla \mathbf{u}}_{L^2}^2,
\end{split}
\end{align}
where we used $\eqref{first}$ and
\begin{align*}
\norm{P-\bar{P}}_{L^4}^2&\leq C\norm{P-\bar{P}}_{L^2}^2\leq C(\norm{\nabla G}_{L^2}^2+|\bar{G}|^2+\norm{(2\mu+\lambda)\div\mathbf{u}}_{L^2}^2)\\
&\leq C(\norm{\sqrt{\rho}\dot{\mathbf{u}}}_{L^2}^2+\norm{\nabla\mathbf{u}}_{L^2}^2).
\end{align*}
Integrating $\eqref{rho dot u2}$ with respect to time gives $\eqref{second}$.
\end{proof}

Finally, we estimate $\norm{\nabla \rho}_{L^q}$ and $\norm{\nabla^2 \mathbf{u}}_{L^2}$.
\begin{prop}\label{Third}
For any $q>2$, there exists a constant $C$ depending on $q,\mu,\beta,\gamma,R,T$ and initial data such that
\begin{align}\label{third}
&\sup_{0\leq t\leq T}(\norm{\rho}_{W^{1,q}}+\norm{\mathbf{u}}_{H^2})\leq C.
\end{align}
\end{prop}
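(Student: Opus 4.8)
The plan is to close the higher-order estimates by combining the time-uniform density bound $\sup_{[0,T]}\norm{\rho}_{L^\infty}\le M$ obtained above with Propositions~\ref{First} and \ref{Second}, the elliptic structure of the effective viscous flux, and a Beale--Kato--Majda / transport argument in the spirit of \cite{2016 Huang-Li-JMPA} and \cite{2023 Huang-Su-Yan-Yu}. The device that replaces the missing boundary data for $G$ and $w$ is the radial identity \eqref{G,w}, i.e.\ $\norm{\nabla G}_{L^p}+\mu\norm{\nabla^\perp w}_{L^p}\le C\norm{\rho\dot{\mathbf{u}}}_{L^p}$ for all $1\le p<\infty$. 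First I would upgrade the integrability of $\nabla G$ and $\nabla^\perp w$: since $\mathbf{u}|_{\partial\Omega}=0$ forces $\dot{\mathbf{u}}|_{\partial\Omega}=0$, Lemma~\ref{G-N ineq} gives $\norm{\dot{\mathbf{u}}}_{L^q}\le C\norm{\nabla\dot{\mathbf{u}}}_{L^2}$, whence $\norm{\nabla G}_{L^q}+\norm{\nabla^\perp w}_{L^q}\le C(M)\norm{\nabla\dot{\mathbf{u}}}_{L^2}$; moreover \eqref{G,w}, Proposition~\ref{Second} and \eqref{energy estimate} give the time-uniform bounds $\norm{G}_{H^1}+\norm{w}_{H^1}\le C$, hence $\norm{G}_{L^p}+\norm{w}_{L^p}\le C(p)$ for all finite $p$, and, via \eqref{GN2}, $\norm{G}_{L^\infty}+\norm{w}_{L^\infty}\le C(1+\norm{\nabla\dot{\mathbf{u}}}_{L^2})$.

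Next I would recover $\nabla^2\mathbf{u}$ from the flux. Elliptic regularity for $\mathbf{u}\in W^{2,q}\cap W_0^{1,q}$ gives $\norm{\nabla^2\mathbf{u}}_{L^q}\le C\norm{\Delta\mathbf{u}}_{L^q}\le C(\norm{\nabla\div\mathbf{u}}_{L^q}+\norm{\nabla^\perp w}_{L^q})$; writing $\div\mathbf{u}=(G+P-\bar{P})/(2\mu+\lambda)$, differentiating, and using $\rho\le M$, $\gamma>1$ together with the estimates on $G$ above, one obtains $\norm{\nabla^2\mathbf{u}}_{L^q}\le C(M)(1+\norm{\nabla\dot{\mathbf{u}}}_{L^2})(1+\norm{\nabla\rho}_{L^q}+\norm{\nabla(\rho^\beta)}_{L^q})$. (Tracking $\nabla(\rho^\beta)$ alongside $\nabla\rho$ is needed only when $0<\beta<1$, where $\rho^{\beta-1}$ is unbounded at vacuum; for $\beta\ge 1$ the two quantities are comparable up to a factor $M^{|\beta-1|}$.) Then Lemma~\ref{B-K-M}, combined with the bounds above and with Proposition~\ref{First}, yields, setting $\Psi:=\norm{\nabla\rho}_{L^q}+\norm{\nabla(\rho^\beta)}_{L^q}$,
\[
\norm{\nabla\mathbf{u}}_{L^\infty}\le C\bigl(1+\norm{\nabla\dot{\mathbf{u}}}_{L^2}^2\bigr)\bigl(1+\log(e+\Psi)\bigr)+C .
\]

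The third step is the transport estimate. Differentiating $\rho_t+\mathbf{u}\cdot\nabla\rho+\rho\div\mathbf{u}=0$ and $(\rho^\beta)_t+\mathbf{u}\cdot\nabla(\rho^\beta)+\beta\rho^\beta\div\mathbf{u}=0$ in $x$ and testing against $|\nabla\rho|^{q-2}\nabla\rho$, resp.\ $|\nabla(\rho^\beta)|^{q-2}\nabla(\rho^\beta)$, the decisive terms are $\rho\,\nabla\div\mathbf{u}$ and $\rho^\beta\,\nabla\div\mathbf{u}$: the extra factors $\rho$ and $\rho^\beta$ exactly cancel the $\rho^{\beta-1}$ produced by $\nabla\lambda$, so that $\norm{\rho\nabla\div\mathbf{u}}_{L^q}+\norm{\rho^\beta\nabla\div\mathbf{u}}_{L^q}\le C(1+\norm{\nabla\dot{\mathbf{u}}}_{L^2})(1+\Psi)$ (using the $L^\infty$ bound on $G$). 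Consequently
\[
\frac{d}{dt}\Psi\le C\norm{\nabla\mathbf{u}}_{L^\infty}\,\Psi+C(1+\norm{\nabla\dot{\mathbf{u}}}_{L^2})(1+\Psi)\le C\bigl(1+\norm{\nabla\dot{\mathbf{u}}}_{L^2}^2\bigr)\,(e+\Psi)\log(e+\Psi).
\]
Since $\int_0^T\norm{\nabla\dot{\mathbf{u}}}_{L^2}^2\,dt\le C$ by Proposition~\ref{Second} (a $T$-independent bound), dividing by $(e+\Psi)\log(e+\Psi)$ and integrating $\log\log(e+\Psi)$ gives $\sup_{[0,T]}\Psi\le C$, with $C$ depending on $q,\mu,\beta,\gamma,R$, the initial data, and on $T$ only through $\int_0^T 1\,dt$; in particular $\sup_{[0,T]}\norm{\nabla\rho}_{L^q}\le C$. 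Feeding this back, $\norm{\nabla^2\mathbf{u}}_{L^2}\le C\norm{\Delta\mathbf{u}}_{L^2}\le C(\norm{\nabla G}_{L^2}+\norm{\nabla P}_{L^2}+\norm{\div\mathbf{u}\,\nabla\lambda}_{L^2}+\norm{\nabla^\perp w}_{L^2})$, where the product $\div\mathbf{u}\,\nabla\lambda$ is estimated by H\"older with $\nabla\lambda\in L^q$ and $\div\mathbf{u}\in L^{2q/(q-2)}$ — the latter controlled by the time-uniform $\norm{G}_{H^1}$ bound, so no $L^\infty$ bound on $G$ is needed here — and $\norm{\nabla^\perp w}_{L^2}\le C\norm{\sqrt{\rho}\dot{\mathbf{u}}}_{L^2}\le C$ by Proposition~\ref{Second}; together with $\norm{\rho}_{L^q}\le M|\Omega|^{1/q}$ and $\norm{\mathbf{u}}_{L^2}\le C\norm{\nabla\mathbf{u}}_{L^2}\le C$ this yields \eqref{third}.

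The step I expect to be the main obstacle is the closure in the third paragraph. The quantity $\norm{\nabla^2\mathbf{u}}_{L^q}$ sitting inside the Beale--Kato--Majda logarithm is controlled only through $\norm{\nabla\dot{\mathbf{u}}}_{L^2}$, and Proposition~\ref{Second} makes $\norm{\nabla\dot{\mathbf{u}}}_{L^2}$ only square-integrable — not bounded — in time; hence $\norm{\nabla\dot{\mathbf{u}}}_{L^2}^2$ must be kept inside the time integral and the differential inequality closed by a double-logarithmic Gr\"onwall rather than directly. A secondary difficulty is the degenerate bulk viscosity $\lambda=\rho^\beta$ with $0<\beta<1$: $\nabla\lambda=\beta\rho^{\beta-1}\nabla\rho$ blows up at vacuum, which forces one to estimate $\nabla(\rho^\beta)$ directly (so that the compensating factor $\rho^\beta$ in $\rho^\beta\nabla\div\mathbf{u}$ is precisely what is available) and, in the final $H^2$ bound, to split $\div\mathbf{u}\,\nabla\lambda$ through a finite-exponent H\"older pair instead of through $\norm{G}_{L^\infty}$.
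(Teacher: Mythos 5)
Your proposal follows the same overall route as the paper's proof: the radial identity \eqref{G,w} supplies $\norm{\nabla G}_{L^q}+\norm{\nabla^\perp w}_{L^q}\lesssim\norm{\nabla\dot{\mathbf{u}}}_{L^2}$ in lieu of boundary data, the Beale--Kato--Majda inequality converts the $L^\infty$ bound on $\nabla\mathbf{u}$ into a logarithm of $\norm{\nabla\rho}_{L^q}$, and the double-logarithmic Gr\"onwall, driven by the $T$-uniform integrability $\int_0^T\norm{\nabla\dot{\mathbf{u}}}_{L^2}^2\,dt\le C$ from Proposition~\ref{Second}, closes the transport estimate; the final $H^2$ bound on $\mathbf{u}$ then follows by elliptic regularity as you indicate. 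You correctly identify the genuine pressure point: $\norm{\nabla\dot{\mathbf{u}}}_{L^2}$ is only square-integrable in time, so the logarithm must be peeled off twice rather than once. The paper closes its Gr\"onwall with $\norm{\rho\dot{\mathbf{u}}}_{L^q}\lesssim\norm{\nabla\dot{\mathbf{u}}}_{L^2}$ to the first power (then Cauchy--Schwarz in $t$), while you use the slightly cruder $\norm{\nabla\dot{\mathbf{u}}}_{L^2}^2$; both are equally good.

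The one place where you do something the paper does not is your decision to carry $\Psi=\norm{\nabla\rho}_{L^q}+\norm{\nabla(\rho^\beta)}_{L^q}$ together. The paper's inequality \eqref{nabla2 u Lq} and its final $L^2$ bound on $\nabla^2\mathbf{u}$ pass from $\nabla\bigl((G+P-\bar{P})/(2\mu+\lambda)\bigr)$ to terms involving only $\norm{\nabla\rho}_{L^q}$, which implicitly uses that $\nabla\lambda=\beta\rho^{\beta-1}\nabla\rho$ with $\rho^{\beta-1}$ bounded; this is fine for $\beta\ge1$ but degenerates at vacuum for $0<\beta<1$. Your reformulation — estimating $\nabla\lambda$ directly as $\nabla(\rho^\beta)$, transporting $\rho^\beta$ via its own renormalized equation, and observing that the factors $\rho$ and $\rho^\beta$ in front of $\nabla\div\mathbf{u}$ in the differentiated transport equations exactly absorb the quotient-rule denominator — is the right fix and makes the argument uniformly valid on $0<\beta<\infty$ including at vacuum. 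Likewise, in the final $H^2$ step, replacing $\norm{G}_{L^\infty}\norm{\nabla\rho}_{L^2}$ by a finite-exponent H\"older pairing of $\div\mathbf{u}\in L^{2q/(q-2)}$ (controlled by the $T$-uniform $\norm{G}_{H^1}$) against $\nabla\lambda\in L^q$ is a clean way to avoid the same degeneracy. This is a sound refinement rather than a departure: it produces the same conclusion and the same dependence of the constant on $q,\mu,\beta,\gamma,R,T$ and the data.

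One small bookkeeping caveat worth recording: the passage from the differential inequality to $\sup_{[0,T]}\Psi\le C$ picks up, besides $\int_0^T\norm{\nabla\dot{\mathbf{u}}}_{L^2}^2\,dt$, an additive $CT$ from the constant term in Lemma~\ref{B-K-M}, which is exactly why the statement's constant is allowed to depend on $T$ even though all the ingredient bounds (Propositions~\ref{First}--\ref{Second}, the density bound) are $T$-independent. You already note this; it is consistent with the statement and with the paper.
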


\begin{proof}
Applying \(\nabla \) to the equation $\eqref{Equ1}$ gives
\begin{align}
\begin{split}
(\nabla\rho)_t+\mathbf{u}\cdot\nabla^2\rho +\nabla\mathbf{u}\cdot\nabla\rho+\nabla\rho\div\mathbf{u}+\rho\nabla\div\mathbf{u}=0.
\end{split}
\end{align}
Multiplying the above equation by \(q|\nabla\rho|^{q-2}\nabla\rho\) and integrating over $\Omega$ gives
\begin{align*}
\dfrac{d}{dt}\int_\Omega|\nabla\rho|^qdx&\leq C\int_{\Omega}|\nabla\rho|^q|\nabla\mathbf{u}|dx+C\int_\Omega|\nabla\rho|^{q-1}|\nabla\div\mathbf{u}|dx\\
&\leq C\norm{\nabla\mathbf{u}}_{L^\infty}\norm{\nabla\rho}_{L^q}^q+C\norm{\nabla\div\mathbf{u}}_{L^q}\norm{\nabla\rho}_{L^q}^{q-1}.
\end{align*}
Therefore,
\begin{align}\label{nabla rho q}
\begin{split}
\frac{d}{dt}\norm{\nabla\rho}_{L^q}&\leq C\norm{\nabla\mathbf{u}}_{L^\infty}\norm{\nabla\rho}_{L^q}+C\norm{\nabla\div\mathbf{u}}_{L^q}.
\end{split}
\end{align}
By using the Lemma \ref{B-K-M}, \eqref{first} and \eqref{GN2}, we get
\begin{align}\label{nabla u inf}
\begin{split}
\norm{\nabla \mathbf{u}}_{L^\infty}&\leq C(\norm{\div \mathbf{u}}_{L^\infty}+\norm{w}_{L^\infty})\log(e+\norm{\nabla^2 \mathbf{u}}_{L^q})+C\norm{\nabla \mathbf{u}}_{L^2}+C\\
&\leq C(\norm{G}_{L^\infty}+\norm{w}_{L^\infty}+1)\log(e+\norm{\rho\dot{\mathbf{u}}}_{L^q}+\norm{\nabla\rho}_{L^q})+C\\
&\leq C(\norm{G}_{L^2}^{\frac{q-2}{2q-2}}\norm{G}_{W^{1,q}}^{\frac{q}{2q-2}}+\norm{w}_{L^2}^{\frac{q-2}{2q-2}}\norm{\nabla w}_{L^q}^{\frac{q}{2q-2}}+1)\log(e+\norm{\rho\dot{\mathbf{u}}}_{L^q}+\norm{\nabla\rho}_{L^q})+C\\
&\leq C(\norm{\rho\dot{\mathbf{u}}}_{L^q}^{\frac{q}{2q-2}}+1)\log(e+\norm{\rho\dot{\mathbf{u}}}_{L^q}+\norm{\nabla\rho}_{L^q})+C\\
&\leq C(\norm{\rho\dot{\mathbf{u}}}_{L^q}+1)\log(e+\norm{\nabla\rho}_{L^q})+C(\norm{\rho\dot{\mathbf{u}}}_{L^q}+1),
\end{split}
\end{align}
where we used
\begin{align}\label{nabla2 u Lq}
\begin{split}
\norm{\nabla^2\mathbf{u}}_{L^q}&\leq C\norm{\nabla\div \mathbf{u}}_{L^q}+C\norm{\nabla w}_{L^q}\\
&\leq C\norm{\nabla\left(\frac{G+P-\bar{P}}{2\mu+\lambda}\right)}_{L^q}+C\norm{\nabla w}_{L^q}\\
&\leq C(\norm{\nabla G}_{L^q}+\norm{\nabla w}_{L^q}+\norm{G}_{L^\infty}\norm{\nabla\rho}_{L^q}+\norm{\nabla\rho}_{L^q}) \\
&\leq C(\norm{\nabla G}_{L^q}+\norm{\nabla w}_{L^q}+\norm{G}_{L^2}^{\frac{q-2}{2q-2}}\norm{G}_{W^{1,q}}^{\frac{q}{2q-2}}\norm{\nabla\rho}_{L^q}+\norm{\nabla\rho}_{L^q}) \\
&\leq C(\norm{\rho\dot{\mathbf{u}}}_{L^q}+\norm{\rho\dot{\mathbf{u}}}_{L^q}^{q/(2(q-1))}\norm{\nabla\rho}_{L^q}+\norm{\nabla \rho}_{L^q}).
\end{split}
\end{align}
Combining \eqref{nabla rho q} and \eqref{nabla u inf}, we obtain
\begin{align}
\begin{split}
\dfrac{d}{dt}\log\log(e+\norm{\nabla\rho}_{L^q})&\leq C+C\norm{\rho\dot{\mathbf{u}}}_{L^q}\leq C+C\norm{\nabla\dot{\mathbf{u}}}_{L^2}.
\end{split}
\end{align}
Integrating with respect to time, one has
\begin{align}\label{nabla rho}
\sup_{0\leq t\leq T}\Vert \nabla \rho\Vert_{L^q}\leq C.
\end{align}
Finally, standard elliptic estimates, along with \eqref{second} and \eqref{nabla rho}, show that
\begin{align}
\begin{split}
\norm{\nabla^2\mathbf{u}}_{L^2}&\leq C\norm{\nabla\div \mathbf{u}}_{L^2}+C\norm{\nabla w}_{L^2}\\
&\leq C\norm{\nabla\left(\frac{G+P-\bar{P}}{2\mu+\lambda}\right)}_{L^2}+C\norm{\nabla w}_{L^2}\\
&\leq C+C(\norm{\nabla G}_{L^2}+\norm{\nabla \omega}_{L^2})+\norm{G|\nabla\rho|}_{L^2}+\norm{\nabla\rho}_{L^2}\\
&\leq C+C(\norm{\nabla G}_{L^2}+\norm{\nabla \omega}_{L^2})+C\norm{G}_{L^{2q/(q-2)}}\norm{\nabla \rho}_{L^q}\\
&\leq C+C\norm{\rho\dot{\mathbf{u}}}_{L^2}+C\norm{G}_{H^1}\norm{\nabla \rho}_{L^q}\\
&\leq C+C\norm{\rho\dot{\mathbf{u}}}_{L^2}\\
&\leq C.
\end{split}
\end{align}
We complete the proof.
\end{proof}

\section{Proofs of Theorem \ref{Thm1} and \ref{Thm3}}
After all preparation done, we proceed to prove Theorem \ref{Thm1} and \ref{Thm3}. 

\subsection{Proof of Theorem \ref{Thm1}}
 Since \(\rho_0\) is radially symmetric, we continuously extend \(\rho_0\) to \(B_{2R}\), and the resulting function is still denoted as \(\rho_0\). Let
\begin{align}\label{rho0 delta}
\rho_0^\delta =\rho_0*\eta_\delta+\delta,
\end{align}
where \(\eta_\delta\) is the standard mollifying kernel. Let \(\mathbf{u}_0^\delta\) is the unique solution to the following system:
\begin{align}\label{u0 delta}
\begin{dcases}
-\mu \Delta \mathbf{u}_0^\delta-\nabla((\mu+\lambda(\rho_0^\delta))\div\mathbf{u}_0^\delta)+\nabla P(\rho_0^\delta)=\sqrt{\rho_0^\delta}g,\quad \text{in }\Omega,\\
\mathbf{u}_0^\delta=0, \quad \text{on }\partial\Omega.
\end{dcases}
\end{align}
It is easy to check:
\begin{align}
\lim\limits_{\delta\rightarrow 0}\left(\norm{\rho_0^\delta-\rho_0}_{W^{1,q}}+\norm{\mathbf{u}_0^\delta-\mathbf{u}_0}_{H^1}\right)=0.
\end{align}
According to Lemma \ref{local existence}, there exists a time \(T > 0\) and a local strong solution \((\rho^\delta, \mathbf{u}^\delta)\). Assuming the maximal existence time \(T^*\) of the local strong solution is finite, then we can slightly modified the proof of Proposition 5.1 in \cite{2016 Huang-Li-JMPA} to obtain 
\begin{align}\label{rho delta L inf}
\lim\limits_{t\rightarrow T^*}\norm{\rho^\delta (t)}_{L^\infty}=\infty.
\end{align}
We obtain \eqref{rho delta L inf} by employing the proof by contradiction. Assuming that \eqref{rho delta L inf} does not hold, we claim there exists a constant \(\hat{C} > 0\), which may depend on \(T^*\) but not on \(T<T^*\), such that
\begin{align}\label{non vacuum}
\inf_{(x,t)\in \Omega\times(0,T)}\rho^\delta(x,t)\ge \hat{C}^{-1},
\end{align}
and for any $0<T<T^*$, 
\begin{align}\label{rho H^2}
\sup_{0\le t\le T}(\norm{\rho^\delta}_{H^2}+\norm{\mathbf{u}^\delta}_{H^2})\leq \hat{C}.
\end{align}

We first show that \(\rho^\delta\) is away from vacuum on $(0,T)\times\Omega$. Note that
\begin{align}
\rho^\delta(x,t)=\rho^\delta_0(x_0)\exp\left\{-\int_0^t\div \mathbf{u}^\delta(x(\tau),\tau)d\tau\right\}.
\end{align}
By standard elliptic regularity theory, for some $q>2$, we obtain
\begin{align}
\begin{split}
\norm{\div \mathbf{u}^\delta}_{L^\infty}&\leq \hat{C}\norm{\nabla \mathbf{u}^\delta}_{W^{1,q}}\leq \hat{C}+\hat{C}\norm{\nabla \div\mathbf{u}^\delta}_{L^q}+\hat{C}\norm{\nabla \omega^\delta}_{L^q}\\
&\leq \hat{C}+\hat{C}\norm{\nabla\left(\frac{G^\delta+P^\delta-\bar{P}^\delta}{2\mu+\lambda(\rho^\delta)}\right)}_{L^q}+\hat{C}\norm{\nabla w^\delta}_{L^q}\\
&\leq \hat{C}+\hat{C}\norm{G^\delta}_{W^{1,q}}+\hat{C}\norm{\nabla w^\delta}_{L^q}\\
&\leq \hat{C}+\hat{C}\norm{\nabla \dot{\mathbf{u}}^\delta}_{L^2},
\end{split}
\end{align}
where we used \eqref{third} and \eqref{G,w}. Therefore, from \eqref{second} one has 
\begin{align}
\inf_{(x,t)\in \Omega\times(0,T)}\rho^\delta(x,t)\ge\inf_{x_0\in \Omega}\rho_0^\delta(x)\exp\left\{-\int_0^T\norm{\div \mathbf{u}^\delta}_{L^\infty}d\tau\right\}\ge\hat{C}^{-1}.
\end{align}

Next, we will prove \eqref{rho H^2}. Due to \eqref{third}, we only need to prove that
\begin{align}\label{nabla2rho}
\sup_{0\le t\le T} \norm{\nabla^2 \rho^\delta(t)}_{L^2}\leq \hat{C}.
\end{align}
Applying \(\partial_i \partial_j\) to equation $\eqref{Equ1}_1$, we obtain
\begin{align}
\begin{split}
\partial_t(\partial_{ij}\rho^\delta)&+\partial_{ij} u^\delta_k\partial_k \rho^\delta+\partial_i u^\delta_k\partial_{jk}\rho^\delta+\partial_j u_k^\delta\partial_{ik}\rho^\delta+u_k^\delta \partial_{ijk}\rho^\delta\\
&+\partial_{ij}\rho^\delta\partial_ku^\delta_k+\partial_i\rho^\delta\partial_{jk}u^\delta_k+\partial_j\rho^\delta\partial_{ik}u^\delta_k+\rho^\delta \partial_{ijk} u^\delta_k=0.
\end{split}
\end{align}
Multiplying the above equation by $\partial_{ij}\rho^\delta$, summing over \(i, j\), and integrating over $\Omega$ yields
\begin{align}\label{ddt nabla2rho}
\begin{split}
&\quad\frac{d}{dt}\int_\Omega |\nabla^2\rho^\delta|^2dx\\
&\leq \hat{C} \left(\int_\Omega |\nabla\rho^\delta||\nabla^2\mathbf{u}^\delta||\nabla^2\rho^\delta|dx+\int_\Omega|\nabla \mathbf{u}^\delta||\nabla ^2\rho^\delta|^2dx+\int_\Omega\rho^\delta|\nabla^3 \mathbf{u}^\delta||\nabla^2\rho^\delta|dx\right)\\
&\leq \hat{C}(1+\norm{\nabla \mathbf{u}^\delta}_{L^\infty})\int_\Omega |\nabla^2\rho^\delta|^2dx+\hat{C}\norm{\nabla \rho^\delta}_{L^q}^2\norm{\nabla^2 \mathbf{u}^\delta}_{H^1}^2+\hat{C}\norm{\rho^\delta}_{L^\infty}^2\norm{\nabla^3 \mathbf{u}^\delta}_{L^2}^2\\
&\leq \hat{C}(1+\norm{\nabla \dot{\mathbf{u}}^\delta}_{L^2})\int_\Omega |\nabla^2\rho^\delta|^2dx+\hat{C}+\hat{C}\norm{\nabla^3 \mathbf{u}^\delta}_{L^2}^2,
\end{split}
\end{align}
where we have used $\eqref{third}.$ Next, we estimate $\norm{\nabla^3 \mathbf{u}^\delta}_{L^2}^2$. Standard elliptic theory indicates that
\begin{align}
\begin{split}
\norm{\nabla^3 \mathbf{u}^\delta}_{L^2}&\leq \hat{C}+ \hat{C}\norm{\nabla^2\div\mathbf{u}^\delta}_{L^2}+\hat{C}\norm{\nabla^2\omega^\delta}_{L^2}\\
&\leq \hat{C}+ \hat{C}\norm{\nabla^2\left(\frac{G^\delta+P^\delta-\bar{P}^\delta}{2\mu+\lambda(\rho^\delta)}\right)}_{L^2}+\hat{C}\norm{\nabla^2\omega^\delta}_{L^2}\\
&\leq \hat{C}+\hat{C}\left(\norm{\nabla^2 G^\delta}_{L^2}+\norm{\nabla G^\delta}_{L^{\frac{2q}{q-2}}}\norm{\nabla \rho^\delta}_{L^q}+\norm{G^\delta}_{L^\infty}\norm{\nabla^2\rho^\delta}_{L^2}\right.\\
&\quad \left.+\norm{G^\delta}_{L^\infty}\norm{\nabla \rho^\delta}_{L^4}^2+\norm{\nabla^2 \rho^\delta}_{L^2}+\norm{\nabla\rho^\delta}_{L^4}^2+\norm{\nabla^2\omega^\delta}_{L^2}\right)\\
&\leq \hat{C}\left(1+\norm{\nabla^2 G^\delta}_{L^2}+\norm{\nabla^2\omega^\delta}_{L^2}\right)+\hat{C}(1+\norm{\nabla \dot{\mathbf{u}}^\delta}_{L^2})(1+\norm{\nabla^2\rho^\delta}_{L^2}).
\end{split}
\end{align}
Next, we estimate the most challenging parts, $\norm{\nabla^2 G^\delta}_{L^2}$ and $\norm{\nabla^2\omega^\delta}_{L^2}$. Direct calculations show that
\begin{align*}
\partial_{ij} G^\delta&=\partial_j\left(\partial_r G^\delta\frac{x_i}{r}\right)=\partial_{rr}G^\delta\frac{x_ix_j}{r^2}+\partial_r G^\delta\frac{\delta_{ij}r^2-x_ix_j}{r^3},
\end{align*}
and 
\begin{align*}
\Delta G^\delta=\partial_{rr}G^\delta+\frac{\partial_rG^\delta}{r}.
\end{align*}
Therefore,
\begin{align}
\begin{split}
\norm{\nabla^2 G^\delta}_{L^2}&\leq \hat{C}\left(\norm{\partial_{rr} G^\delta}_{L^2}+\norm{\frac{\partial_r G^\delta}{r}}_{L^2}\right)\leq \hat{C}\left(\norm{\Delta G^\delta}_{L^2}+\norm{\frac{\partial_r G^\delta}{r}}_{L^2}\right)\\
&\leq \hat{C}\left(\norm{\nabla(\rho^\delta\dot{\mathbf{u}}^\delta)}_{L^2}+\norm{\frac{\rho^\delta \dot{\mathbf{u}}^\delta}{r}}_{L^2}\right)\leq \hat{C}\norm{\nabla \dot{\mathbf{u}}^\delta}_{L^2},
\end{split}
\end{align}
where we used 
\begin{align*}
|\nabla\dot{\mathbf{u}}^\delta|^2=(\partial_r \left<\dot{\mathbf{u}}^\delta, \mathbf{e_r}\right>)^2+(\partial_r \left<\dot{\mathbf{u}}^\delta, \mathbf{e_\theta}\right>)^2+\frac{|\left<\dot{\mathbf{u}}^\delta, \mathbf{e_r}\right>|^2}{r^2}+\frac{|\left<\dot{\mathbf{u}}^\delta, \mathbf{e_\theta}\right>|^2}{r^2}\ge\left|\frac{\dot{\mathbf{u}}^\delta}{r}\right|^2.
\end{align*}
Similarly, we obtain
\begin{align*}
\norm{\nabla^2\omega^\delta}_{L^2}\leq \hat{C}\norm{\nabla \dot{\mathbf{u}}^\delta}_{L^2}.
\end{align*}
Therefore, one has 
\begin{align}
\norm{\nabla^3 \mathbf{u}^\delta}_{L^2}\leq \hat{C}(1+\norm{\nabla \dot{\mathbf{u}}^\delta}_{L^2})\norm{\nabla^2\rho^\delta}_{L^2}+\hat{C}\norm{\nabla\dot{\mathbf{u}}^\delta}_{L^2}+\hat{C}.
\end{align}
Substituting the above estimates into $\eqref{ddt nabla2rho}$ gives 
\begin{align}
\begin{split}
\frac{d}{dt}\int_\Omega |\nabla^2\rho^\delta|^2dx\leq \hat{C}(1+\norm{\nabla \dot{\mathbf{u}}^\delta}_{L^2}^2)\int_\Omega |\nabla^2\rho^\delta|^2dx+\hat{C}\norm{\nabla\dot{\mathbf{u}}^\delta}_{L^2}^2+\hat{C}.
\end{split}
\end{align}
By using the Gronwall inequality and \eqref{second}, we complete the proof of \eqref{nabla2rho}. Then, we can define \(\rho^\delta(T^*)\) and \(\mathbf{u}^\delta(T^*)\). According to Lemma \ref{local existence}, there exists a positive time interval starting from \(T^*\) on which a local solution exists. This contradicts the definition of \(T^*\). Therefore, \eqref{rho delta L inf} holds, which contradicts \eqref{uniform bound 1}, \eqref{uniform bound2} and \eqref{upper bound density}. Therefore, we have proven that $(\rho^\delta, \mathbf{u}^\delta)$ is global.

Letting $\delta\rightarrow 0$, standard compactness arguments ensure the existence of a unique strong solution to $\eqref{Equ 2}$-$\eqref{boundary condition2}$ satisfying \eqref{regularity}. As for asymptotic behaviors \eqref{long time behaviors}, the proof is the same as in \cite{2016 Huang-Li-JMPA}, so we omit it.

\subsection{Proof of Theorem \ref{Thm3}}
Assuming \(\rho_0\) satisfies $\eqref{rho 0 small}$, let \(\delta\) be sufficiently small such that
\begin{align*}
\norm{\rho_0^\delta}_{L^\infty}\leq K_{\mu, \beta,\gamma, R, \norm{\nabla \mathbf{u}_0^\delta}_{L^2}}^{-1}\left(\theta\left(\frac{199}{200}(7\mu)^{\frac{1}{\beta}}\right)\right),
\end{align*}
where \(\rho_0^\delta\) is shown as in \eqref{rho0 delta} and $\mathbf{u}_0^\delta$ satisfy \eqref{u0 delta}. By Lemma \ref{local existence}, there exists a time \(T > 0\) and a unique strong solution \((\rho^\delta, \mathbf{u}^\delta)\) on \((0, T) \times \Omega\). Suppose that the maximal existence time \(T^*\) of the local strong solution is finite, then 
\begin{align}
\lim\limits_{t\rightarrow T^*}\norm{\rho^\delta (t)}_{L^\infty}=\infty,
\end{align}
By the continuity of the density with respect to time, we define \(\tilde{T}\) to satisfy
\begin{align*}
\norm{\rho^\delta(t)}_{L^\infty}< (7\mu)^{\frac{1}{\beta}},\quad 0<t<\tilde{T},
\end{align*}
and 
\begin{align*}
\norm{\rho(\tilde{T})}_{L^\infty}=(7\mu)^{\frac{1}{\beta}}.
\end{align*}
However, according to \eqref{tilde RT2}, such a \(\tilde{T}\) does not exist. Therefore, \(T^* = \infty\). 

Let \(\delta \to 0\), we complete the proof using the same argument as in Theorem \ref{Thm1}.
\section*{Acknowledgments}
X.-D. Huang is partially supported by CAS Project for Young Scientists in Basic Research,
Grant No. YSBR-031 and NNSFC Grant Nos. 12494542 and 11688101.

\vspace{1cm}
\noindent\textbf{Data availability statement.} Data sharing is not applicable to this article.

\vspace{0.3cm}
\noindent\textbf{Conflict of interest.} The authors declare that they have no conflict of interest.

\end{document}